%
%
\documentclass[11pt]{amsart}
\usepackage[foot]{amsaddr}
\usepackage{color}
 
\usepackage{amsmath,amsfonts}
\usepackage{amsfonts,amsmath,amsthm,amssymb,latexsym}
\usepackage{tikz}
\usepackage{mathrsfs}
\usepackage{mathtools}
\usepackage{algorithm}
\usepackage{algorithmic}
\usepackage{ulem}
\usepackage{hyperref}
\usepackage[toc,page]{appendix}
\usepackage{MnSymbol}
\usepackage{url}

\newtheorem{theorem}{Theorem}[section]
\newtheorem{proposition}[theorem]{Proposition}
\newtheorem{lemma}[theorem]{Lemma}
\newtheorem{corollary}[theorem]{Corollary}
\theoremstyle{definition}
\newtheorem{definition}[theorem]{Definition}
\newtheorem{example}[theorem]{Example}

\theoremstyle{remark}
\newtheorem{remark}[theorem]{Remark}
\numberwithin{equation}{section}
\theoremstyle{remark}

\DeclareMathOperator{\lcm}{lcm}
\DeclareMathOperator{\num}{num}
\DeclareMathOperator{\denom}{denom}

\newcommand{\C}{\mathbb{C}}
\newcommand{\K}{\mathbb{K}}
\newcommand{\LL}{\mathbb{L}}

\newcommand{\FF}{\mathbb{F}}
\newcommand{\Q}{\mathbb{Q}}

\newcommand{\N}{\mathbb{N}}
\newcommand{\V}{\mathbb{V}}
\newcommand{\Pa}{\mathcal{P}}
\newcommand{\cP}{\mathcal{P}}

\newcommand{\Cu}{\mathcal{C}}
\newcommand{\Qa}{\mathcal{Q}}

\newcommand{\Res}{\mathrm{res}}

\def \aa{\mathbf{a}}
\def \aao{\mathbf{a}^0}
\def \S{\mathbb{S}}
\def \res{\mathrm{res}}
\def \No{\mathrm{Norm}}
\def \gg{\mathfrak{g}}
\def \genus{\mathrm{genus}}
\def \sing{\mathrm{sing}}

\def \JJ{\mathrm{J}}
\def \fam{\mathcal{F}}
\def \CuFam{\Cu_{\fam}}
\def \Stand{\mathscr{D}}

\allowdisplaybreaks


\oddsidemargin -1mm \evensidemargin -1mm \topmargin 0mm \headheight
10pt \headsep 1.5cm \footskip 30pt \textheight 215mm \textwidth
155mm \columnsep 10pt \columnseprule 0pt \topsep 1pt plus 2pt minus
4pt
\itemsep 1pt plus 2pt minus 1pt
\marginparwidth 0pt \oddsidemargin .5cm \evensidemargin .5cm

\marginparsep 0pt \topmargin -.5cm \sloppy


\begin{document}

{\color{red} \noindent The journal version of this paper appears in [Sebastian Falkensteiner, J. Rafael Sendra,
Rationality and parametrizations of algebraic curves under specializations,
Journal of Algebra, Volume 659, 2024,
Pages 698-744,
ISSN 0021-8693. 
 { https://doi.org/10.1016/j.jalgebra.2024.07.009} ]
 
 \vspace*{2mm}
 
\noindent licensed under  the CC BY license ({https://creativecommons.org/licenses/by/4.0/}) }

 \vspace*{2mm}

\date{\today}
\title{Rationality and Parametrizations of Algebraic Curves under Specializations}

\author{Sebastian Falkensteiner}
\address{Max Planck Institute Leipzig, Germany.}
\email{sebastian.falkensteiner@mis.mpg.de}

\author{J.Rafael Sendra}
\address{CUNEF Universidad, Department of Quantitative Methods, Spain}
\email{jrafael.sendra@cunef.edu}

\begin{abstract} 
Rational algebraic curves have been intensively studied in the last decades, both from the theoretical and applied point of view. 
In applications (e.g. level curves, linear homotopy deformation, geometric constructions in computer aided design, image detection, algebraic differential equations, etc.), there often appear unknown parameters. It is possible to adjoin these parameters to the coefficient field as transcendental elements. 
In some particular cases, however, the curve has a different behavior than in the generic situation treated in this way. 
In this paper, we show when the singularities and thus the (geometric) genus of the curves might change. 
More precisely, we give a partition of the affine space, where the parameters take values, so that in each subset of the partition the specialized curve is either reducible or its genus is invariant. In particular, we give a Zariski-closed set in the space of parameter values where the genus of the curve under specialization might decrease or the specialized curve gets reducible. 
For the genus zero case, and for a given rational parametrization, a finer partition is possible such that the specialization of the parametrization parametrizes the specialized curve. Moreover, in this case, the set of parameters where Hilbert's irreducibility theorem does not hold can be identified. 
We conclude the paper by illustrating these results by some concrete applications.
\end{abstract}

\maketitle

\keywords{Algebraic curves, parameters, rational parametrizations, singularities, geometric genus, Hilbert's irreducibility theorem}

\section*{Acknowledgements}
 Authors are partially supported by the grant PID2020-113192GB-I00/AEI/10.13039/501100011033 (Mathematical Visualization: Foundations, Algorithms and Applications) from the Spanish State Research Agency (Ministerio de Ciencia, Innovación y Universidades) .
Part of this work was developed during a research visit of the first author to CUNEF University in Madrid.

\section{Introduction}
 
The study and analysis of the behavior of algebraic or algebraic-geometric objects under specializations is of great interest from a theoretical, computational or applied point of view. For instance, some techniques for computing resultants, gcds, or polynomial factorizations, rely on Hensel's lemma or the Chinese remainder theorem (see e.g.~\cite{Geddes}, \cite{winkler}). From a more theoretical point of view, also computational, it is important to control, for instance, when a resultant, or more generally a Gr\"obner basis with parameters, specializes properly (see e.g.~\cite{cox}, \cite{montes}). 
The question whether a given irreducible polynomial over $\K(a_1,\ldots,a_n)$ remains irreducible when the parameters are replaced by values in a field $\K$ was studied intensively by Hilbert~\cite{hilbert1892} and Serre~\cite{Serre} and is the defining property of ``Hilbertian fields''.
The work of Serre can be seen in a more general context. 
Another clear example is the resolution of problems in matrix analysis. Besides the basic analysis of systems with parameters, this type of situation appears in the computation of generalized functional inverses (see e.g.~\cite{penrose}). 

In relation to the applications derived from the particular results of this paper, the following can be mentioned. Geometric constructions in computer aided design, like offsets, conchoids, cissoids etc., introduce parameters as the distance, the matrix entries of an isometry, or the coordinates of the foci; see e.g.~\cite{arrondo1}, \cite{lu}, \cite{cissoids}, \cite{conchoids}.  
Usually it is interesting to analyze geometric properties of these varieties under the specialization of the parameters; for instance, the genus of offsets is analyzed in~\cite{arrondo2}, and the rationality of the conchoids depending on the foci in~\cite{conchoids}. 
Another possible application is the analysis of the rationality of the level curves of surfaces, where one of the variables is taken as a parameter, or the  linear homotopy deformation of curves. Curve recognition, via Hough transform (see e.g.~\cite{torrente2017}), is another field of potential applicability of the results. 
In this context, a catalog of curves, i.e. a family of curves depending on several parameters, as well as a cloud of points, are given. Then, parameter values providing the best approximation, among the catalog, of the cloud of points, is computed. In this frame one may think on determining rational curve solutions in the catalog. 
Finally, one may mention the rational solutions of functional algebraic equations, or the solution of algebraic differential equations which coefficients depend on parameters (see~\cite{FS}). In Section~\ref{sec-application}, we illustrate some of these applications by means of examples.

In this paper, we study algebraic curves $\Cu(F)$ given as the zero-set of a polynomial
\begin{equation}\label{eq-parametric}
F(x,y) = 0 ~\text{ with }~ F \in \K(a_1,\ldots,a_n)[x,y]
\end{equation}
where $\K$ is a computable field of characteristic zero, $a_1,\ldots,a_n$ are a set of parameters, and $F$ is irreducible over the algebraic closure of the coefficient field $\K(a_1,\ldots,a_n)$. We focus on the problem that for certain values of the parameters $a_1,\ldots,a_n$ the algebraic properties of the resulting curve do not coincide with the generic properties of $\Cu(F)$.
More precisely, we define several Zariski-closed sets in the space of parameter values where non-generic behavior may appear. 
Of particular interest are the singularities, their multiplicities and their character. This leads to a partition of the affine space, where the parameters take values, so that in each subset of the partition the specialized curve is either reducible or its (geometric) genus is invariant.
When the generic curve has genus zero, and a rational parametrization, depending on the parameters $a_1,\ldots,a_n$, is given, we provide a new sub-partition where the parametrization specializes properly. 
In particular, the set of parameters where Hilbert's irreducibility theorem does not hold can be identified. Moreover, the birationality of the specialization of the rational parametrization is guaranteed. 
To the best of our knowledge, this is the first systematic computational study of the problem.

In~\cite{walker,Fulton,mauro,myBook}, and references therein, algebraic curves and their rationality are studied. 
The problem of finding rational parametrizations of plane curves is a classical problem and has already been studied by Hilbert and Hurwitz~\cite{HH}, and more recently in~\cite{hoeij}, \cite{schicho}, \cite{SW91}, \cite{SW97}. 
In addition,
for evaluating the parameters, it is important to control field extensions which might be necessary for computing parametrizations. 
Optimal fields of parametrizations have been studied in~\cite{hoeij} and \cite{SW97}.
When introducing parameters in the coefficients, new phenomena have to be considered and lead to Tsen's study of finding solutions in a minimal field~\cite{tsen}.

The structure of the paper is as follows.
In Section~\ref{sec-prel} we present some preliminaries on algebraic curves and rational parametrizations. In Section~\ref{sec-specialization}, we introduce the unspecified parameters and their specialization. 
The computation of the genus and rational parametrizations is followed to define several computable Zariski-open subsets $\Omega$ where the specialized curve behaves, up to irreducibility, as in the generic case. 
The actual computation of the genus is presented in Section~\ref{sec-genus}. 
In Theorem~\ref{theorem:genus-weak} is shown that the genus of the specialized curve, where the parameters take values in $\Omega_{\mathrm{singOrd}}$, is less or equal to the generic genus or the defining polynomial is reducible. 
A direct corollary of that is that specialized curves of rational curves are also rational or reducible (Corollary~\ref{cor:genus-zero}). 
For values in a smaller set $\Omega_{\mathrm{genusOrd}}$, it is shown that the genus of the curve remains exactly the same, again up to irreducibility, see Theorem~\ref{theorem:genus-strong}. 
Section~\ref{sec-parametricCurves} is devoted to the case where the generic curve is rational; in this frame the irreducibility can be guaranteed. 
For some of the parameter values the genus may remain the same but an evaluation of the parametrization is not possible. 
In Theorem~\ref{thrm:genus0general}, however, is presented an open set where the specialization is possible and results in a parametrization of the specialized curve. 
These open sets can be recursively used for decomposing the whole parameter space as it is explained in Section~\ref{sec:decomposition}. 
Applications as described above are presented by using illustrative examples in Section~\ref{sec-application}.

The decomposition of the parameter space, derived from our analysis, depends on the particular method used to compute the genus as well as the rational parametrization.  There exist different methods to deal computationally with the genus: the adjoint curve based method (see e.g.~\cite{walker}, \cite{mauro} and~\cite{myBook}), the method based on the anticanonical divisor (see~\cite{hoeij}) or the method based on Puiseux expansions (see~\cite{PW}), among others. Similarly for the rational parametrization. In this paper we will follow the adjoint curve based method. For the sake of completeness of the paper, we have included an appendix oriented to give the necessary computational details on how the genus and parametrizations are computed.

This manuscript is a self-contained work on the computation of the genus and rational parametrizations of algebraic curves involving parameters. 
Results from various mathematical disciplines are combined for this purpose and presented in a coherent way. 
A rigorous construction of such computable Zariski-open sets were, up to our knowledge, missing in the literature. 
The theorems mentioned in the previous paragraph are novel and can be directly applied in several interesting problems involving parametric curves.
 
\vspace*{2mm}

\noindent \textsf{Notation.} Throughout this paper, the following notation will be used. 
\begin{itemize}
\item We denote by $\aa$ a tuple of parameters.
\item $\K$ is a computable field of characteristic zero.  We represent by $\LL$ the field extension $\LL:=\K(\aa)$. In addition, we consider an algebraic element $\gamma$ over $\LL$. Let $\FF$ be the field $\FF:=\LL(\gamma)$. Furthermore, $K$ represents any field extension of $\K$. We denote by $\overline{K}$ the algebraic closure of $K$, similarly for any field appearing in the paper. 
\item $\S$ is the affine space
\begin{equation}\label{eq-S}
	\S=\overline{\K}^{\,\#(\aa)}
\end{equation}
where $\aa$ will take values.
\item 
For $G\in K[x,y]\setminus K$, we denote by $\Cu(G)$ the plane affine algebraic curve
\begin{equation}\label{eq-CurveAfin} \Cu(G)=\{ p\in \overline{K}^2\,|\, G(p)=0\}. 
\end{equation}
We denote by $G^h(x,y,z)$ the homogenization of $G$, and by $G_x,G_y$ (similarly for $G^{h}_{x}, G^{h}_{y}, G^{h}_{z}$) the partial derivative of $G$ w.r.t. $x$ and $y$ respectively. For a homogeneous polynomial $M\in K[x,y,z]\setminus \{0\}$, $\Cu(M)$ denotes the projective plane curve
\begin{equation}\label{eq-CurveProj} \Cu(M)=\{ p\in \mathbb{P}^2(\overline{K})\,|\, M(p)=0\}. 
\end{equation}
\item 
For polynomials $f,g$ in the variable $t$, and coefficients in an integral domain, we denote by $\mathrm{res}_t(f,g)$ the resultant of $f$ and $g$ w.r.t. $t$.
\item 
Let $\{f_1,\ldots,f_k\}\subset K[\overline{v}]$, where $\overline{v}$ is a tuple of variables. We denote by $\V(f_1,\ldots,f_k)$ the common zero set, over $\overline{K}$, of the polynomials $\{f_1,\ldots,f_k\}$; similarly for $\V(\mathrm{I})$ where $\mathrm{I}$ is an ideal in $K[\overline{v}]$.
\item Let $R\in K[\overline{v}]$, where $\overline{v}$ is a tuple of variables, be a polynomial.  We denote by $\deg(R)$ the total degree of $R$ w.r.t. $\overline{v}$. For a particular variable $v_i$ in $\overline{v}$, we denote by $\deg_{v_i}(G)$ the degree of $G$ w.r.t. the $v_i$. 
\item Let $R\in K(\overline{v})$, where $\overline{v}$ is a tuple of variables, be a rational function. We denote by $\num(R)$ and $\denom(R)$ the numerator and denominator of $R$, respectively, assuming that $R$ is expressed in reduced form; that is, the numerator and denominator are assumed to be coprime. We denote by $\deg(R):=\max\{\deg(\num(R)),\deg(\denom(R))\}$, and  
$\deg_{v_i}(G):=\max\{\deg_{v_i}(\num(R)),\deg_{v_i}(\denom(R))\}$. 
\end{itemize}

\section{Preliminaries on Rational Curves}\label{sec-prel}
In this section we recall some notions and results related to rational (plane) curves that we will be important throughout the paper; for further details we refer to~\cite{myBook}. In addition, some more technical issues, needed in the development of the paper, are included in the appendix. For this purpose, throughout this section, let $G\in K[x,y]\setminus K$.

\subsection{Rational Curves}\label{sec-pre} 
The zero set $\Cu(G)$ of $G$, over the algebraic closure $\overline{K}$, is called the \textit{affine plane curve} associated to $G$ (see~\eqref{eq-CurveAfin}). Taking the homogenization $G^h$ of $G$, we associate to $\Cu(G)$ the \textit{projective plane curve} $\Cu(G^h)$, see~\eqref{eq-CurveProj}. Then $G$ (resp. $G^h)$ is called the \textit{defining polynomial} of $\Cu(G)$ (resp. of $\Cu(G^h)$).

The \textit{degree} of $\Cu(G)$ (similarly for $\Cu(G^h)$) is defined as the total degree of the polynomial $G$ and we denote it as $\deg(\Cu(G))$. Moreover, we say that $\Cu(G)$ is \textit{irreducible} (similarly for $\Cu(G^h)$) if the polynomial $G$ is irreducible over $\overline{K}$. Observe that $\deg(\Cu(G))=\deg(\Cu(G^h))$   and irreducibility of $\Cu(G)$ and $\Cu(G^h)$ are equivalent. In the sequel, we assume that $\Cu(G)$ is irreducible. 
Let us recall that $p\in \Cu(G)$ is a \textit{singular point of $\Cu(G)$ of multiplicity $r\in \N$}, also called \textit{$r$-fold point}, if all partial derivatives of $G$ of order at most $r-1$ vanish at $p$ and, at least, one partial derivative of order $r$ does not vanish. 
The \textit{tangents} to $\Cu(G)$ at an $r$-fold point $p\in \Cu(G)$ are introduced as the lines defined by the linear factors of the sum of all terms in the Taylor expansion of $G$ around $p$ generated by the partial derivatives of $G$, at $p$, of order $r$. 
If all tangents at $p$ are different, $p$ is called \textit{ordinary} otherwise is called \textit{non-ordinary}. We call \textit{character} of a singularity, the fact of being ordinary or non-ordinary. 
The notions of singularity, tangent, and character, are similarly introduced for $\Cu(G^h)$. In the appendix we see how to compute and manipulate the singular locus of the curve. 

For the purposes of the paper, the notion of rational parametrization is important. A \textit{rational (affine) parametrization} of $\Cu(G)$ is a pair $\Pa(t)\in \overline{K}(t)^2 \setminus \overline{K}^2$ such that $G(\Pa(t))=0$. A \textit{rational (projective) parametrization} of $\Cu(G^h)$ is of the form $\Qa(h,t)=(p_1(h,t):p_2(h,t):p_3(h,t))$ where $p_i$ are co-prime homogeneous forms, of the same degree, over $\overline{K}$, not all zero, such that $G^h(\Qa)=0$. Not all curves can be rationally parametrized.  
Curves admitting a rational parametrization are called \textit{rational curves}. 
The rationality of $\Cu(G)$ and $\Cu(G^h)$ are equivalent. Moreover, parametrizations of $\Cu(G)$ and $\Cu(G^h)$ relate each other by means of homogenizing and dehomogenizing. So, in the following we focus on affine parametrizations. 

A parametrization $\Pa(t)$ is called \textit{birational} or \textit{proper} if the map $\overline{K} \dashrightarrow \Cu(G); t\mapsto \Pa(t)$ is injective in a non--empty open Zariski subset of $\overline{K}$ (see e.g.~\cite[Sections 4.2. and  4.4]{myBook}). 

Rational curves can be characterized by means of the genus of the curve. Intuitively speaking the genus measures the difference between the maximum number of singularities (properly counted) the curve, of a fixed degree, may have, and the actual number of singularities it does have. More formally, for an irreducible projective curve $\Cu(G^h)$, the \textit{(geometric) genus} can be defined in various ways, for instance, via the linear space of divisors; see~\cite[Definition 3.4]{myBook},~\cite[Section 8.6]{Fulton} or~\cite[Section 8.2]{mauro} for more details. The genus of $\Cu(G)$ is defined as the genus of $\Cu(G^h)$. Now rational curves are precisely the irreducible curves of genus zero (see e.g. Theorem 4.63 in~\cite{myBook} or~\cite[page 210]{mauro}). As commented in the introduction, the genus can be computed in different ways. 
There exist algorithmic methods to compute the genus of an algebraic curve and to determine, when the genus is zero, a rational parametrization of the curve (see e.g.~\cite{hoeij}, \cite{schicho}, \cite{SW91}, \cite{SW97}, \cite{myBook}). Within this paper, we make use of the adjoint curves based method for parametrizing curves which is summarized in Appendix~\ref{App}.

\subsection{Fields of Parametrization}\label{sec-FieldOfparametrization}

In general, if one computes a parametrization $\Pa(t)$ of $\Cu(G)$, the ground field, that is the smallest field where the coefficients of $G$ belong to, has to be extended (see e.g. Sections 4.7. and 4.8. in~\cite{myBook}). A field is called a \textit{parametrizing field} or \textit{field of parametrization} of $\Cu(G)$ if there exists a parametrization of $\Cu(G)$ with coefficients in it. In this subsection, we recall some results on fields of parametrization. 

Hilbert-Hurwitz Theorem (see~\cite{HH}) plays a fundamental role in this context. Thus, let us briefly comment it. For this purpose, we will use the notion of adjoint curve (see e.g. Definition 4.55 and 4.56 in~\cite{myBook} or Subsection~\ref{subsec:param} in the appendix).  
Given a projective rational plane curve $\Cu(G^h)$ of degree $d$, Hilbert-Hurwitz Theorem states that almost all 3-tuples of $(d-2)$-degree adjoints, to $\Cu(G^h)$, define a rational map from $\mathbb{P}^2$ to $\mathbb{P}^2$, that maps birationally $\Cu(G^h)$ onto a $(d-2)$--degree projective irreducible plane curve, say $\Cu(G^*)$. Now, since the ground field of the original curve and that of the adjoints coincides, say $K$, (see e.g. Theorem 4.66 in~\cite{myBook}), the ground field of $\Cu(G^*)$ is also $K$. Furthermore, since birational maps preserve the genus of the curve (see e.g.~\cite[Theorem 7.2.2 or Prop. 8.5.1]{mauro}), one has that $\Cu(G^*)$ is also rational. Therefore, one may apply again the theorem to $\Cu(G^*)$. 
After finitely many applications of the theorem one gets a birational map sending $\Cu(G^h)$ onto a conic (if $d$ is even) or a line (if $d$ is odd) with the same ground field. Finally, taking into account that a line can be parametrized over the ground field, and a conic over a field extension of the ground field of degree at most two, one concludes that there always exists a field of parametrization of $\Cu(G^h)$ that involves a field extension of $K$ of degree at most two (see Corollary 5.9. in~\cite{myBook} for further details). 
Indeed, this field extension, of degree at most two, is the field extension used in Step (4) of the parametrization computation (see Subsection~\ref{subsec:param} in the appendix).

Let us now focus on the case where the ground field is $\LL:=\K(\aa)$; see notation above. Let $G\in \LL[x,y]$ be an irreducible (over $\overline{\LL}$) non-constant polynomial, and let $\mathcal{C}(G)$ be rational. We analyze the fields of parametrization of $\Cu(G)$. We observe that if the degree two field extension is $\LL(\alpha)$, with minimal polynomial $t^2+b\,t+c\in \LL[t]$, then $\LL(\alpha)=\LL(\beta)$ where $\beta=\alpha+b/2$ has the minimal polynomial $t^2+c-b^2/4$.
Therefore, taking into account the above discussion on Hilbert-Hurwitz Theorem, the following holds (see also~\cite[Corollary 5.9]{myBook}).

\begin{theorem}\label{theorem:HHext} \
\begin{enumerate}
\item If $\deg(\Cu(G))$, is odd then $\LL$ is a field of parametrization.
\item If $\deg(\Cu(G))$ is even then either $\LL$ is a field of parametrization or there exists $\delta\in \overline{\LL}$ algebraic over $\LL$, with minimal polynomial $t^2-\alpha\in \LL[t]$, such that $\LL(\delta)$ is a field of parametrization of $\Cu(G)$.
\end{enumerate}
\end{theorem}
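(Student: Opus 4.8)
The plan is to reduce the general statement to the Hilbert-Hurwitz descent already described in the discussion preceding the theorem, with the ground field specialized to $\LL=\K(\aa)$. First I would invoke the Hilbert-Hurwitz argument recounted above: starting from the projective rational curve $\Cu(G^h)$ of degree $d=\deg(\Cu(G))$, a generic triple of $(d-2)$-degree adjoints defines a birational map onto a rational curve $\Cu(G^*)$ of degree $d-2$, defined over the same ground field $\LL$ (since the adjoint system is $\LL$-rational and birational maps preserve genus). Iterating this construction lowers the degree by two at each step while keeping the ground field equal to $\LL$, so after $\lfloor (d-1)/2\rfloor$ applications one arrives at a curve whose degree is $1$ if $d$ is odd and $2$ if $d$ is even.

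For part (1), when $d$ is odd the terminal curve of the descent is a line defined over $\LL$. A line always admits a parametrization with coefficients in its ground field (one simply solves the linear defining equation for one coordinate in terms of a parameter), so $\LL$ itself is a parametrizing field, and pulling back along the composition of birational maps yields a parametrization of $\Cu(G)$ over $\LL$. For part (2), when $d$ is even the terminal curve is a conic $\Cu(Q)$ over $\LL$. Here there are two cases: either the conic already has an $\LL$-rational point, in which case projection from that point parametrizes it over $\LL$ and hence $\LL$ is a field of parametrization; or it has no $\LL$-rational point, in which case parametrizing it requires passing to a quadratic extension. The key normalization step is the remark stated just before the theorem: any quadratic extension $\LL(\alpha)$ with minimal polynomial $t^2+bt+c$ equals $\LL(\beta)$ with $\beta=\alpha+b/2$ having the depressed minimal polynomial $t^2+(c-b^2/4)$. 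Setting $\alpha:=b^2/4-c\in\LL$ and $\delta:=\beta$, the extension $\LL(\delta)$ has minimal polynomial $t^2-\alpha$, which gives exactly the claimed form; over $\LL(\delta)$ the conic acquires a rational point and can be parametrized, so $\LL(\delta)$ is a parametrizing field of $\Cu(G)$.

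The main obstacle, or rather the main point needing care, is verifying that the ground field genuinely remains $\LL$ throughout the descent rather than silently acquiring algebraic extensions. This is where one leans on the cited facts that the adjoint curves to an $\LL$-defined curve can be taken with coefficients in $\LL$ (Theorem 4.66 in \cite{myBook}), that ``almost all'' triples work so a generic $\LL$-rational choice is available, and that the image curve $\Cu(G^*)$ inherits the ground field $\LL$ from the $\LL$-rationality of the map. A secondary subtlety is confirming that the degree-two jump is the only obstruction: the parity argument shows a conic (even $d$) or line (odd $d$) is always reached, and only the conic case forces the possible quadratic extension. Once these two facts are in place the statement follows by transporting the parametrization of the terminal curve back through the finite composition of $\LL$-birational maps, which preserves the field of definition of the coefficients. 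I would therefore present the proof essentially as a formalization of the preceding paragraph, emphasizing the $\LL$-rationality of each descent step and the explicit depressed-quadratic normalization for the shape of the minimal polynomial in part (2).
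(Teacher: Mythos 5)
Your proposal matches the paper's own justification: the theorem is stated as a direct consequence of the preceding discussion of iterated Hilbert--Hurwitz descent to a line or conic over the unchanged ground field $\LL$, combined with the depressed-quadratic normalization $\LL(\alpha)=\LL(\beta)$ with minimal polynomial $t^2+c-b^2/4$. Your argument is correct and takes essentially the same route, just written out more explicitly.
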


\begin{remark}
Observe that the previous result is valid taking $\LL$ as any field extension of $\K$.
\end{remark}

The case where $\mathbf{a}$ contains a single element admits a particular treatment because of Tsen's Theorem; we refer to~\cite{tsen} for this topic.

\begin{corollary}\label{cor:TsenN2}
If $\#(\aa)=1$, then $\LL$ is a field of parametrization of $\Cu(G)$.
\end{corollary}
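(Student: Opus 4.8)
The plan is to reduce the statement to the existence of a rational point on a conic and then to invoke Tsen's Theorem, exactly as the preceding discussion of Hilbert--Hurwitz suggests. By Theorem~\ref{theorem:HHext}(1), if $\deg(\Cu(G))$ is odd then $\LL$ is already a field of parametrization and nothing has to be shown. Hence I would concentrate entirely on the even-degree case, which is precisely the situation in which Theorem~\ref{theorem:HHext}(2) only guarantees a parametrization over a degree-two extension $\LL(\delta)$ with minimal polynomial $t^2-\alpha\in\LL[t]$; the whole content of the corollary is that, when $\#(\aa)=1$, this extension can be avoided.

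For the even case I would recall the Hilbert--Hurwitz reduction described above: iterating the birational projection by $(d-2)$-degree adjoints carries $\Cu(G^h)$ onto a projective conic $\Cu(G^*)$, preserving both rationality and the ground field $\LL$. Since a conic is parametrizable over a field $F$ if and only if it has an $F$-rational point (a point on the conic serves as the base point of the usual pencil-of-lines parametrization, which then has coefficients in $F$), the statement is reduced to proving that $\Cu(G^*)$ possesses an $\LL$-rational point. Producing such a point and pushing the resulting parametrization of $\Cu(G^*)$ back through the birational map yields a parametrization of $\Cu(G)$ with coefficients in $\LL$.

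Here the hypothesis $\#(\aa)=1$ is decisive: $\LL=\K(\aa)$ is then a rational function field in one variable, and this is exactly the setting of Tsen's Theorem~\cite{tsen}, which yields that $\LL$ is quasi-algebraically closed, i.e.\ a $C_1$ field. The defining form of $\Cu(G^*)$ is homogeneous of degree $2$ in the three projective variables $x,y,z$, and since $3>2$ the $C_1$ property forces a nontrivial zero in $\LL^3$, that is, an $\LL$-rational point of $\Cu(G^*)$. Combined with the previous paragraph, this shows the degree-two extension of Theorem~\ref{theorem:HHext}(2) is unnecessary and that $\LL$ itself is a field of parametrization.

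The main obstacle I anticipate is the precise invocation of Tsen's Theorem rather than the geometry, which is routine here. One must be careful about the role of the constant field $\K$ in the statement of Tsen's Theorem (the $C_1$ conclusion for $\K(\aa)$ is classical when $\K$ is algebraically closed, and the hypotheses under which it transfers to the given $\K$ must be pinned down), and then confirm that ``$C_1$'' is exactly the numerical condition $n=3>2=d$ needed to guarantee a point on the conic. Once the existence of the $\LL$-point is secured, the remaining passage from that point to an explicit $\LL$-parametrization is standard and follows the adjoint-based construction summarized in the appendix.
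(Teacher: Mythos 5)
Your argument is essentially the paper's own proof: the paper likewise uses the Hilbert--Hurwitz reduction to conclude that $\Cu(G)$ is $\LL$--birationally equivalent to a line or a conic, dismisses the line case as clear, and settles the conic case by Tsen's Theorem (citing Shafarevich), exactly as you propose. The caveat you flag about the constant field $\K$ not being algebraically closed in the classical statement of Tsen's Theorem is a legitimate point, but the paper's proof does not address it either, so your proposal matches the paper in both substance and level of rigor.
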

\begin{proof}
By Hilbert-Hurwitz Theorem (see e.g.~\cite[Theorem 5.8]{myBook} and as explained above), $\Cu(G)$ is $\LL$--birationally equivalent to either a line or a conic. 
In the line case, the result is clear. In the conic case, since $\#(\aa)=1$, the result follows from Tsen's Theorem (see e.g.~\cite[Corollary 1.11]{shafa}).
\end{proof}

\begin{remark}
The proof of Tsen's Theorem provides a method for computing a regular point on the conic with coordinates in $\LL$. An alternative approach for computing this point can be found in~\cite{HW} and~\cite{vo}.
\end{remark}

\begin{remark}\label{rem:TsenGeneralization}
In the following section we work with $G\in \K[\aa,\gamma][x,y]$ where $\gamma$ is algebraic over $\K(\aa)$. 
In the case where $\#(\aa)=1$, we can view $\gamma$ as the only parameter and write $\aa$ in terms of $\gamma$. 
More precisely, let $M(\aa,c) \in \K(\aa)[c]$ be the minimal polynomial of $\gamma$. We can view $M$ as a rational expression in $\aa$ and consider its numerator    $$H(\mathbf{b}):=\num(M)(\aa=\mathbf{b},c=\gamma) \in \K(\gamma)[\mathbf{b}]$$ as polynomial in $\mathbf{b}$ with the root $\aa$. 
Thus, $\K(\gamma,\aa):\K(\gamma)$ is a field extension of degree $d \le \deg(H)$. 
If $d=1$, by Corollary~\ref{cor:TsenN2}, $\K(\gamma)$ is a field of parametrization of $\Cu(G)$.
\end{remark}

\begin{remark}
In Corollary~\ref{cor:TsenN2}, we have seen that if $\#(\aa)=1$ then $\LL$ is a field of parametrization. The following example shows that if $\#(a)>1$, in general, $\LL$ is not a field of parametrization. We consider the conic defined by $ {G}:=a_1 x^2+a_2 y^2-1$. We prove that $\Cu(G)$ does not have a parametrization over $\C(a_1,a_2)$. Let us assume that $\C(a_1,a_2)$ is a field of parametrization of $\Cu(G)$, then $\Cu(G)$ has infinitely many points in $\C(a_1,a_2)^2$. $\Cu(G)$ parametrizes properly as 
\[ \Pa:=\left( \sqrt{a_1}\, \frac{1-t^{2}}{t^{2}+1}, \sqrt{a_2} \,\frac{2 t}{t^{2}+1}
\right), \]
which inverse is
\[ \Pa^{-1}(x,y)=\frac{\sqrt{a_{2}}\, \left(\sqrt{a_{1}}+x \right)}{\sqrt{a_{1}}\, y}.\]
So, there are infinitely many points in $\Cu(G)\cap \C(a_1,a_2)^2$ that are injectively reachable, via $\Pa$, for $t\in \C(\sqrt{a_1},\sqrt{a_2})$. Indeed, note that all points of $\Cu(F)$, with the exception of $(-\sqrt{a_1},0)$, are reachable by $\cP$. Let $t_0\in \C(\sqrt{a_1},\sqrt{a_2}) \setminus \{ 0,\pm \mathrm{i}\}$ be one of these parameter values; say $\cP(t_0)=(x_0,y_0)\in \C(a_1,a_2)^2$. 
Then $t_{0}^2 = (\sqrt{a_1}+x_0)/(\sqrt{a_1}-x_0) \in \C(\sqrt{a_1},a_2).$ For $x_0 \ne 0$, it holds that $\sqrt{a_1}+x_0$, $\sqrt{a_1}-x_0$ are coprime (seen as polynomials in $\sqrt{a_1}$) and $$t_0 = \pm \sqrt{\tfrac{\sqrt{a_1}+x_0}{\sqrt{a_1}-x_0}} \notin \C(\sqrt{a_1},\sqrt{a_2}),$$ a contradiction. 
For $x_0=0$ the curve-points $(x_0,y_0)=(0,\pm 1/\sqrt{a_2})$ are not in $\C(a_1,a_2)$.
\end{remark}

\section{Specializations}\label{sec-specialization}
Throughout the paper, we will specialize the tuple of parameters $\aa$ taking values in $\S$ (see~\eqref{eq-S}). We will write $\aao$ to emphasize that the parameters in $\aa$ have been substituted by elements in $\overline{\K}$. In the following we discuss different aspects on the specializations.

\subsection{General statements}
The elements in $\overline{\K}(\aa)$ are assumed to be represented in reduced form; that is, the numerator and denominator are assumed to be coprime. Then, for $f:=p(\aa)/q(\aa)\in \overline{\K}(\aa)$, where by assumption $\gcd(p,q)=1$, and for $\aao\in \S$ (see~\eqref{eq-S}) such that $q(\aao)\neq 0$, we denote by $f(\aao)$ the $\overline{\K}$--element $p(\aao)/q(\aao)$.

We may need to work in the finite field extension $\FF=\LL(\gamma)=\K(\aa)(\gamma)$. Let $p(\aa,t) \in \K(\aa)[t]$, of degree $k$ in $t$, be the minimal polynomial of $\gamma$. We might simply write $p(t)$ instead of $p(\aa,t)$ and express it as
\begin{equation}\label{eq-alpha}
p(t)= t^k+\frac{N_{k-1}(\aa)}{D_{k-1}(\aa)}\,t^{k-1} + \cdots + \frac{N_{0}(\aa)}{D_{0}(\aa)}, \,\,\text{with $N_i,D_i \in \K[\aa]$ and $\gcd(N_i,D_i)=1$.} 
\end{equation}
Then, for $\aao\in \S$ such that all $D_i(\aao)\neq 0$, we denote by $\gamma^0$ the algebraic element, over $\K(\aao)$, defined by an irreducible factor of
$$p(\aao,t)= t^k+\frac{N_{k-1}(\aao)}{D_{k-1}(\aao)} \,t^{k-1}+ \cdots + \frac{N_{0}(\aao)}{D_{0}(\aao)} \in \K(\aao)[t]\subset \overline{\K}[t].$$
 The below reasonings are valid, independently of the irreducible factor taken to define $\gamma^0$. Then, 
for an element $f \in \FF$, specialized at $\aao\in \S$, we might simply write $f(\aao)$ instead of $f(\aao,\gamma^0)$.

\begin{definition}\label{def-gamma}
We define the open subset $\Omega_{\gamma}:=\S \setminus \V(D)$ where $D:=\lcm(D_0,\ldots,D_{k-1})$.
\end{definition}

Clearly for $\aao\in \Omega_\gamma$, $\gamma^{0}$ is well--defined. The elements in $\FF$ are assumed to be expressed in canonical form; that is, $f\in \FF$ is expressed as
\begin{equation}\label{eq-f}
f=\sum_{i=0}^{k-1}\dfrac{U_i(\aa)}{W(\aa)}\gamma^i
\end{equation}
where $U_i,W\in \K[\aa]$ and $\gcd(U_1,\ldots,U_{k-1},W)=1$. In addition, the coefficients of polynomials in $\FF[\overline{v}]$, w.r.t. to the tuple of variables $\overline{v}$, are also supposed to be written in canonical form. 
By abuse of notation, we will also denote by $f(\aa,t)$ the polynomial in $\LL[t]$ obtained by replacing in~\eqref{eq-f} the element $\gamma$ by the variable $t$.
Moreover, for $f$ as in~\eqref{eq-f}, we denote by $\No(f)$ the element
$$\No(f):= \prod f(\aa,\gamma_i)$$
where the product is taken over all roots $\gamma_i$ in $\overline{\LL}$ of the minimal polynomial of $\gamma$,  say $p(\aa,t)$ (see~\eqref{eq-alpha}). Since $p$ is monic, taking into account the expression of the resultant as the product of the evaluations of one of the polynomials in the roots of the other, we get that, up to sign, $\No(f)=\Res_t(f(\aa,t),p(\aa,t))$. In particular, $\No(f)\in \LL$.

\begin{lemma}\label{lem:N}
Let $f$ be as in~\eqref{eq-f}. Let $\aao\in \Omega_\gamma$ be such that $W(\aao)\neq 0$ (see Def.~\ref{def-gamma}). If $f(\aao)=0$, then $\No(f)(\aao)=0$.
\end{lemma}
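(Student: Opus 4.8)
The plan is to use the characterization $\No(f)=\Res_t(f(\aa,t),p(\aa,t))$ (up to sign) established just before the statement, together with the fact that this resultant equals $\prod_i f(\aa,\gamma_i)$, the product over all roots $\gamma_i$ of the minimal polynomial $p$. The key observation is that specialization (substituting $\aa=\aao$) commutes with the resultant computation, provided the specialization is well-behaved. Since $\aao\in\Omega_\gamma$, all the denominators $D_i$ are nonzero at $\aao$, so $p(\aao,t)$ is a genuine degree-$k$ monic polynomial in $\K(\aao)[t]$; and since $W(\aao)\neq 0$ as well, the canonical-form expression~\eqref{eq-f} for $f$ specializes to a well-defined polynomial $f(\aao,t)\in\K(\aao)[t]$ without any coefficient blowing up.

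First I would make precise what $\No(f)(\aao)$ means: because $\No(f)=\Res_t(f(\aa,t),p(\aa,t))\in\LL$ is a rational function in $\aa$, its value at $\aao$ is obtained by substituting $\aa=\aao$ in numerator and denominator. The central step is then to argue that the resultant of the two \emph{specialized} polynomials equals the specialization of the resultant, i.e.
\[
\Res_t\bigl(f(\aao,t),p(\aao,t)\bigr)=\Bigl(\Res_t\bigl(f(\aa,t),p(\aa,t)\bigr)\Bigr)(\aao)=\pm\,\No(f)(\aao).
\]
This commutation holds because $p(\aa,t)$ is monic of degree $k$ in $t$: its leading coefficient is the constant $1$, which never vanishes under specialization, so the degree of $p$ in $t$ is preserved at $\aao$. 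Under this hypothesis the resultant is given by a Sylvester-type determinant whose entries are the coefficients of $f$ and $p$, and evaluation at $\aao$ commutes with taking that determinant precisely because no leading coefficient is lost.

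Once the commutation is in hand, the conclusion is immediate: the hypothesis $f(\aao)=0$ means that the specialized element $f(\aao,\gamma^0)=0$, i.e. $\gamma^0$ is a root of the specialized polynomial $f(\aao,t)$. But $\gamma^0$ was defined as a root of an irreducible factor of $p(\aao,t)$, so $f(\aao,t)$ and $p(\aao,t)$ share the common root $\gamma^0$ in $\overline{\K}$. Two univariate polynomials over a field with a common root have vanishing resultant, hence $\Res_t(f(\aao,t),p(\aao,t))=0$, and therefore $\No(f)(\aao)=0$.

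The main obstacle I anticipate is the specialization-commutes-with-resultant step, since in general $\Res$ does not commute with specialization when leading coefficients degenerate. The crucial point that makes it work here is the monicity of $p$ in $t$, which guarantees $\deg_t p$ is stable under specialization; I would invoke the standard fact that if one of the two polynomials retains its degree under a ring homomorphism, the resultant specializes correctly (the other polynomial's possible degree drop only introduces an extra factor of a leading coefficient that is harmless for the direction needed here). One should also verify that $f(\aao,t)$ is genuinely the specialization of $f(\aa,t)$, which is exactly what the condition $W(\aao)\neq 0$ ensures, so that no denominator in the canonical form~\eqref{eq-f} is lost. These two hypotheses, $\aao\in\Omega_\gamma$ and $W(\aao)\neq 0$, are precisely what the lemma assumes, so the argument is clean.
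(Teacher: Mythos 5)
Your proposal is correct and follows essentially the same route as the paper: both use the identity $\No(f)=\pm\Res_t(f(\aa,t),p(\aa,t))$, note that the hypotheses $D(\aao)W(\aao)\neq 0$ make everything well-defined and let the resultant specialize properly (thanks to $p$ being monic in $t$), and conclude from the common root $\gamma^0$ of $f(\aao,t)$ and $p(\aao,t)$ that the resultant, hence $\No(f)(\aao)$, vanishes. The paper's proof is just a terser version of the same argument, leaving the specialization-commutes-with-resultant step implicit where you spell it out.
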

\begin{proof}
$D(\aao)W(\aao)\neq 0$. So $\gamma^0$, $f(\aao)$ and $\No(f)(\aao)=\Res_t(f(\aao,t),p(\aao,t))$ are well-defined. Since  $f(\aao,\gamma^0)=0=p(\aao,\gamma^0)$, we obtain $\No(f)(\aao)=0$.
\end{proof}

\begin{definition}\label{def-open1}
Let $H\in \FF [\overline{v}]$, where $\overline{v}$ is a tuple of variables. Let $S$ be the set of all non-zero coefficients of $H$ w.r.t. $\overline{v}$. Let
$$\mathcal{D}(H):=\lcm(\{\mathrm{denom}(C)\,|\, C\in S\})\in {\K}[\aa],$$
 note that $C\in S\subset \FF$   is  a rational function in $\aa,\gamma$  expressed in canonical form and, hence, its denominator is in $\K[\aa]$. And let
$$\mathcal{V}(H ):=\{  \mathrm{numer}(\No(\mathrm{numer}(C)))\,|\, C\in S \} \subset {\K}[\aa],$$
 note that $\No(\mathrm{numer}(C))\in \LL$ and, hence its numerator is in $\K[\aa]$. 
 We associate to $H$ the following open subsets
\begin{enumerate}
	\item $\Omega_{\mathrm{def}(H)}:=\Omega_\gamma \cap\left( \S\setminus \V(\mathcal{D}(H))\right).$
	\item $\Omega_{\mathrm{nonZ}(H)}:=\Omega_{\mathrm{def}(H)} \cap \left(\S\setminus \V(\mathcal{V}(H ))\right).$
\end{enumerate}
\end{definition}

\begin{remark}
Throughout the paper, we will define several open subsets of $\S$. All these open subsets will be included in $\Omega_{\gamma}$ (for the corresponding algebraic element $\gamma$). So, we observe that $\gamma^0$ will always be well--defined.
\end{remark} 

The next lemma justifies the previous definitions.

\begin{lemma}\label{lem:defVan}
Let $H\in \FF[\overline{v}]$, where $\overline{v}$ is a tuple of variables. It holds that
\begin{enumerate}
	\item If $\aao\in \Omega_{\mathrm{def}(H)}$ then $H(\aao,\gamma^0,\overline{v})$ is well-defined.
	\item If $\aao\in \Omega_{\mathrm{nonZ}(H)}$ then $H(\aao,\gamma^0,\overline{v})\neq 0$.
\end{enumerate}
\end{lemma}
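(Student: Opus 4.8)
The plan is to unwind the definitions in Definition~\ref{def-open1} and track how specialization interacts with the canonical form~\eqref{eq-f} of each coefficient of $H$. Write $H=\sum_{\overline{\alpha}} C_{\overline{\alpha}}\,\overline{v}^{\,\overline{\alpha}}$ with $C_{\overline{\alpha}}\in\FF$, and let $S$ be the set of its non-zero coefficients. Each $C\in S$ is expressed in canonical form, so $C=P_C(\aa,\gamma)/\denom(C)$ where $\denom(C)\in\K[\aa]$ and $P_C:=\num(C)\in\K[\aa][\gamma]$ (the numerator polynomial in $\gamma$ with coefficients in $\K[\aa]$). The key observation is that evaluating $H$ at $(\aao,\gamma^0)$ amounts to evaluating each $C$ at $\aao$, which requires exactly that the denominators survive, and that at least one numerator survives.

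For part~(1), I would argue that well-definedness of $H(\aao,\gamma^0,\overline{v})$ is equivalent to well-definedness of each $C(\aao)$ for $C\in S$. Since $\aao\in\Omega_{\mathrm{def}(H)}\subseteq\Omega_\gamma$, the element $\gamma^0$ is well-defined by the remark following Definition~\ref{def-gamma}, so substituting $\gamma\mapsto\gamma^0$ causes no problem. What must be checked is that no denominator vanishes: by construction $\mathcal{D}(H)=\lcm(\{\denom(C)\mid C\in S\})$, and $\aao\notin\V(\mathcal{D}(H))$ forces $\mathcal{D}(H)(\aao)\neq 0$, hence $\denom(C)(\aao)\neq 0$ for every $C\in S$ since each $\denom(C)$ divides $\mathcal{D}(H)$. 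Therefore every $C(\aao)$ is well-defined, and so is $H(\aao,\gamma^0,\overline{v})$.

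For part~(2), I would show that $H(\aao,\gamma^0,\overline{v})\neq 0$ by exhibiting a single coefficient that does not vanish. Since $\aao\in\Omega_{\mathrm{nonZ}(H)}\subseteq\Omega_{\mathrm{def}(H)}$, part~(1) already gives well-definedness, so it suffices to find $C\in S$ with $C(\aao)\neq 0$. By the definition of $\mathcal{V}(H)$ and the condition $\aao\notin\V(\mathcal{V}(H))$, there is at least one $C\in S$ with $\num(\No(\num(C)))(\aao)\neq 0$; in particular $\No(\num(C))(\aao)\neq 0$. Now I apply Lemma~\ref{lem:N} in contrapositive form to the element $f:=\num(C)\in\FF$ (which has the shape~\eqref{eq-f} with denominator $W=1$, so the hypothesis $W(\aao)\neq 0$ holds trivially): since $\No(f)(\aao)\neq 0$, we get $f(\aao)=\num(C)(\aao,\gamma^0)\neq 0$. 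Because $\denom(C)(\aao)\neq 0$ as well (from part~(1)), it follows that $C(\aao)=\num(C)(\aao)/\denom(C)(\aao)\neq 0$, and hence the coefficient of $\overline{v}^{\,\overline{\alpha}}$ in $H(\aao,\gamma^0,\overline{v})$ corresponding to this $C$ is non-zero, so $H(\aao,\gamma^0,\overline{v})\neq 0$.

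The main obstacle, though a mild one, is bookkeeping the two layers of ``numerator'': $\mathcal{V}(H)$ is built from $\num(\No(\num(C)))$, so I must be careful that $\No(\num(C))\in\LL=\K(\aa)$ and that its numerator lying in $\K[\aa]$ is exactly what the definition asserts, so that evaluating it at $\aao\in\S$ makes sense and its non-vanishing propagates correctly back through Lemma~\ref{lem:N} to $\num(C)(\aao,\gamma^0)$. One subtlety to state cleanly is that Lemma~\ref{lem:N} is phrased as an implication ($f(\aao)=0\Rightarrow\No(f)(\aao)=0$), so I use its contrapositive, and I must confirm its hypothesis $\aao\in\Omega_\gamma$ and $W(\aao)\neq 0$ hold here — both are immediate since $W=1$ for $f=\num(C)$ and $\Omega_{\mathrm{nonZ}(H)}\subseteq\Omega_\gamma$.
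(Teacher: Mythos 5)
Your proposal is correct and follows essentially the same route as the paper: part (1) from $\gamma^0$ being well-defined on $\Omega_\gamma$ together with the non-vanishing of $\mathcal{D}(H)=\lcm$ of the denominators, and part (2) by picking a coefficient $C$ with $\num(\No(\num(C)))(\aao)\neq 0$ and invoking the contrapositive of Lemma~\ref{lem:N} (the paper phrases this as applying the lemma with the non-vanishing of $D(\aao)$ and of $\denom(C)(\aao)$, which is the same argument). Your extra bookkeeping about the two layers of numerator and the trivial hypothesis $W=1$ only makes explicit what the paper leaves implicit.
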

\begin{proof}
(1) Let $\aao\in \Omega_{\mathrm{def}(H)}\subset \Omega_\gamma$. Then, $\gamma^0=\gamma(\aao)$ is well--defined, and the result follows from the definition of $\mathcal{D}(H)$.\\
(2) Let $\aao\in \Omega_{\mathrm{nonZ}(H)}\subset \Omega_{\mathrm{def}(H)}$. Then, by (1), $H(\aao,\gamma^0,\overline{v})$ is well--defined. Furthermore, there exists a coefficient of $H$ w.r.t. $\overline{v}$, say $C(\aa,\gamma)$, such that $\No(\mathrm{numer}(C))(\aao)\neq 0$. Since $D(\aao)\neq 0$ (see Def.~\ref{def-gamma}) and the denominator of $C$ does not vanish at $\aao$, by Lemma~\ref{lem:N}, we get that $C(\aao,\gamma^0)\neq 0$. So, $H(\aao,\gamma^0,\overline{v})\neq 0$.
\end{proof}

The following lemma is an adaptation of Lemma 3 in~\cite{SW01} to our case, and will be used to control the birationality of a curve  parametrization $\Pa(\aa,t)$ under specializations of $\aa$.

\begin{definition}\label{def-gcd}
Let $f_1,f_2\in \FF[u][v]\setminus\{0\}$ for $i\in \{1,2\}$, where $u,v$ are variables. Let $f_i=f_{i}^{*} \, g$, for $i\in \{1,2\}$, where $g=\gcd_{ \FF[u][v]}(f_1,f_2)$. Let $A_i\in \FF[u]$ be the leading coefficient of $f_i$ w.r.t. $v$ for $i\in \{1,2\}$ and $B\in \FF[u]$ the leading coefficient of $g$ w.r.t. $v$. Let $R=\res_{v}(f_{1}^{*},f_{2}^{*})\in \FF[u]$.
Let
\[ \begin{array}{l}
\Omega_1:= \Omega_{\mathrm{def}(f_1)} \cap \Omega_{\mathrm{def}(f_2)} \cap  \Omega_{\mathrm{def}(f_{1}^{*})} \cap \Omega_{\mathrm{def}(f_{2}^{*})}\cap \Omega_{\mathrm{def}(g)} \cap \Omega_{\mathrm{def}(R)},\\
\Omega_2:=\Omega_{\mathrm{nonZ}(A_1)} \cap \Omega_{\mathrm{nonZ}(A_2)} \cap \Omega_{\mathrm{nonZ}(B)}\cap \Omega_{\mathrm{nonZ}(R)}.
\end{array}
\]
We define the set
\[ \Omega_{\mathrm{gcd}_{\FF[u][v]}(f_1,f_2)}:= \Omega_1\cap \Omega_2. \]
\end{definition}

\begin{remark}
In the sequel, unless there is a risk of ambiguity, we will omit in the notation of the gcds, and of the open subset $\Omega_{\gcd}$, the polynomial ring where the gcd is taken.
\end{remark}

  In the remaining part of this section, we might write for the specialization of a given polynomial, or rational function, $P$ at $\aao$ simply $P^0$.  

\begin{lemma}\label{lem-gcd}
Let $f_1,f_2,g$ be as in Def.~\ref{def-gcd}.
For
$\aao\in  \Omega_{\mathrm{gcd}_{\FF[u][v]}(f_1,f_2)},$ it holds that
\[ \lambda(u) \, g(\aao,\gamma^0,u,v)=  \gcd_{\overline{\K}[u][v]}(f_1(\aao,\gamma^0,u,v),f_2(\aao,\gamma^0,u,v)), \]
with $\lambda(u)\in \overline{\K}[u]\setminus \{0\}$. Moreover,  $\deg_v(g(\aao,\gamma^0,u,v))=\deg_v(g(\aa,\gamma,u,v)).$
\end{lemma}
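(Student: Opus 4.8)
The idea is to show that specialization commutes with the gcd computation up to a nonzero constant factor (in $\overline{\K}[u]$), by exploiting the Euclidean-remainder / resultant structure encoded in $\Omega_{\gcd}$. Write $f_i = f_i^* \, g$ in $\FF[u][v]$ with $g = \gcd(f_1,f_2)$. First I would check that all the relevant objects specialize well: since $\aao \in \Omega_1 = \Omega_{\mathrm{def}(f_1)}\cap\cdots\cap\Omega_{\mathrm{def}(R)}$, Lemma~\ref{lem:defVan}(1) guarantees that $f_1^0,f_2^0,(f_1^*)^0,(g)^0,R^0$ are all well-defined after substituting $\aa=\aao$ and $\gamma=\gamma^0$. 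So from $f_i = f_i^* g$ we immediately obtain the specialized factorization $f_i^0 = (f_i^*)^0 \, g^0$ in $\overline{\K}[u][v]$.

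\medskip

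\textbf{Degree preservation and coprimality.} The two things I must still verify are that (a) $\deg_v g^0 = \deg_v g$, i.e. the leading $v$-coefficient of $g$ does not vanish under specialization, and (b) $(f_1^*)^0$ and $(f_2^*)^0$ remain coprime, so that no new common factor is introduced. For (a), since $\aao \in \Omega_2 \subset \Omega_{\mathrm{nonZ}(B)}$, Lemma~\ref{lem:defVan}(2) applied to the leading coefficient $B$ of $g$ gives $B(\aao,\gamma^0)\neq 0$, hence $\deg_v g^0 = \deg_v g$; the same argument with $A_1,A_2 \in \Omega_{\mathrm{nonZ}}$ shows the leading $v$-coefficients of $f_1^*,f_2^*$ survive, so $\deg_v (f_i^*)^0 = \deg_v f_i^*$. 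For (b), the key point is that $R=\res_v(f_1^*,f_2^*)$ is a nonzero element of $\FF[u]$ (nonzero because $f_1^*,f_2^*$ are coprime in the UFD $\FF(u)[v]$), and $\aao\in\Omega_{\mathrm{nonZ}(R)}$ forces $R^0\neq 0$. Since the resultant specializes to the resultant of the specialized polynomials precisely when the leading coefficients do not drop (which we just secured via $A_1,A_2$), we get $\res_v((f_1^*)^0,(f_2^*)^0)=R^0\neq 0$, whence $(f_1^*)^0$ and $(f_2^*)^0$ share no common factor in $\overline{\K}(u)[v]$.

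\medskip

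\textbf{Assembling the gcd.} Combining these facts: $g^0$ divides both $f_1^0$ and $f_2^0$ (from the specialized factorization), so $g^0$ divides $\gcd_{\overline{\K}[u][v]}(f_1^0,f_2^0)$. Conversely, any common divisor of $f_1^0=(f_1^*)^0 g^0$ and $f_2^0=(f_2^*)^0 g^0$ in $\overline{\K}(u)[v]$ must, because $(f_1^*)^0,(f_2^*)^0$ are coprime, divide $g^0$. Therefore $g^0$ and $\gcd(f_1^0,f_2^0)$ agree up to a factor in $\overline{\K}(u)$, and since both are genuine polynomials in $v$ of the same $v$-degree (the degree having been preserved by (a)), the factor lies in $\overline{\K}[u]\setminus\{0\}$; calling it $\lambda(u)^{-1}$ gives $\lambda(u)\,g^0 = \gcd_{\overline{\K}[u][v]}(f_1^0,f_2^0)$, with the asserted $\deg_v$ equality an immediate byproduct.

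\medskip

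\textbf{Main obstacle.} The delicate step is (b): ensuring the specialized cofactors stay coprime. The nontrivial content is that $\res_v(f_1^*,f_2^*)(\aao,\gamma^0) = \res_v((f_1^*)^0,(f_2^*)^0)$, i.e. that the resultant, viewed as a polynomial in the coefficients, commutes with the specialization $\aa\mapsto\aao,\ \gamma\mapsto\gamma^0$. This commutation can fail if a leading coefficient drops, which is exactly why the definition of $\Omega_{\gcd}$ bundles in $\Omega_{\mathrm{nonZ}(A_1)}\cap\Omega_{\mathrm{nonZ}(A_2)}$ alongside $\Omega_{\mathrm{nonZ}(R)}$; I would make explicit that, with the $A_i$ leading coefficients nonvanishing, the Sylvester-matrix formula for the resultant specializes entrywise, so $R^0$ really is the resultant of the specialized cofactors. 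This is the adaptation of Lemma~3 of~\cite{SW01} to the present field $\FF=\LL(\gamma)$, and the extra bookkeeping over $\gamma$ is handled uniformly by Lemmas~\ref{lem:N} and~\ref{lem:defVan}.
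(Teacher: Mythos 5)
Your proposal is correct and follows essentially the same route as the paper: specialize the factorization $f_i=f_i^*\,g$, use $B^0\neq 0$ for degree preservation of $g$, and use the nonvanishing of $A_1^0,A_2^0$ together with $R^0\neq 0$ (via the fact that resultants specialize properly when leading coefficients survive, \cite[Lemma 4.3.1]{winkler}) to conclude that the specialized cofactors have no common factor involving $v$. The only cosmetic difference is that the paper phrases the last step as showing $\deg_v\bigl(\gcd(f_1^{*\,0},f_2^{*\,0})\bigr)=0$ by contradiction and takes $\lambda(u)$ to be that gcd, whereas you argue coprimality directly; the content is identical.
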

\begin{proof}
Let $f_{i}^{*},A_i,B,R$ be as in Def.~\ref{def-gcd} and let $\aao\in \Omega_{\mathrm{gcd}(f_1,f_2)}$.  $f_{i}^{0},g^0,f_{i}^{*\,0},A_{i}^{0},B^{0},R^0$  are all well--defined because $\aao\in \Omega_1$ (see Def.~\ref{def-gcd} and Lemma~\ref{lem:defVan}~(1)). So,
\begin{equation}\label{eq-gcd1}
f_{i}^{0}(u,v)=f_{i}^{*\,0}(u,v)g^0(u,v).
\end{equation}
Moreover, since $\aao\in \Omega_2$ (see Def.~\ref{def-gcd}), by Lemma~\ref{lem:defVan}~(2), $B^0\neq 0$, $R^0\neq 0$. Furthermore, $f_{i}^0,g^0$  preserve the degree in $v$ and, in particular, are also non--zero.
This implies that $f_{i}^{*\,0}$ are non--zero too. From~\eqref{eq-gcd1}, one has that, up to multiplication by non-zero elements in $\overline{\K}$, 
\[  \gcd_{\overline{\K}[u][v]}(f_{1}^0,f_{2}^{0})=  \, \gcd_{\overline{\K}[u][v]}(f_{1}^{*\,0},f_{2}^{*\,0}) \, g^0. \]
Let us prove that $\delta:=\deg_v(\gcd_{\overline{\K}[u][v]}(f_{1}^{*\,0},f_{2}^{*\,0}))=0$, in which case taking $\lambda(u)$ as this gcd, the gcd equality in the statement of the lemma would hold. Indeed, let $\delta>0$. Then, if $\tilde{R}(u)=\Res_v(f_{1}^{*\,0},f_{2}^{*\,0})$, we get that $\tilde{R}$ is zero (see e.g.~\cite[Corollary page 288]{Geddes}). However, since $\aao\in \Omega_2$ (see Def.~\ref{def-gcd}), by Lemma~\ref{lem:defVan} (2), $A_{1}^{0},A_{2}^{0}$ are not zero and, hence, the leading coefficients of $f_{i}^{*}$ do not vanish either at $\aao$. Thus, by~\cite[Lemma 4.3.1]{winkler}, $R^0=\tilde{R}(u)$. But $R^0\neq 0$ which is a contradiction. So, $\delta=0$. In addition, since $B^0\neq 0$, $\deg_v(g^0)=\deg_v(g).$
\end{proof}
 
If the polynomials are univariate over a field, Lemma \ref{lem-gcd} can be simplified as follows.

\begin{corollary}\label{cor-lemma-gcd}
Let $f_1,f_2\in \FF[v]\setminus\{0\}$ for $i\in \{1,2\}$. Let  $g=\gcd_{\FF[v]}(f_1,f_2)$. For $\aao\in \Omega_{\mathrm{gcd}_{\FF[v]}(f_1,f_2)},$ it holds that, up to multiplication by non-zero elements in $\overline{\K}$,
\[  g(\aao,\gamma^0,v)=\gcd_{\overline{\K}[v]}(f_1(\aao,\gamma^0,v),f_2(\aao,\gamma^0,v)). \]  
Moreover, $\deg_v(g(\aao,\gamma^0,v))=\deg_v(g(\aa,\gamma,v)).$
\end{corollary}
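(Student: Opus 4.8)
The plan is to reduce Corollary~\ref{cor-lemma-gcd} to Lemma~\ref{lem-gcd} via a dummy variable. First I would regard $f_1,f_2\in \FF[v]$ as elements of $\FF[u][v]$ that happen to be independent of the auxiliary variable $u$. Under this view the leading coefficients $A_1,A_2$ of $f_1,f_2$ w.r.t.\ $v$, the leading coefficient $B$ of $g=\gcd_{\FF[v]}(f_1,f_2)$, and the resultant $R=\res_v(f_1^{*},f_2^{*})$ all lie in $\FF$, i.e.\ they are constant as polynomials in $u$. Consequently the sets $\Omega_1,\Omega_2$ of Def.~\ref{def-gcd} computed in $\FF[u][v]$ coincide with the $u$-free instance of that definition, which is exactly how $\Omega_{\mathrm{gcd}_{\FF[v]}(f_1,f_2)}$ is to be read. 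Thus the hypothesis $\aao\in \Omega_{\mathrm{gcd}_{\FF[v]}(f_1,f_2)}$ is precisely what Lemma~\ref{lem-gcd} requires.

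Second I would invoke Lemma~\ref{lem-gcd}, which gives $\lambda(u)\,g(\aao,\gamma^0,v)=\gcd_{\overline{\K}[u][v]}(f_1(\aao,\gamma^0,v),f_2(\aao,\gamma^0,v))$ for some $\lambda(u)\in \overline{\K}[u]\setminus\{0\}$, together with $\deg_v(g(\aao,\gamma^0,v))=\deg_v(g(\aa,\gamma,v))$. The degree claim of the corollary is then immediate. It remains to descend from $\overline{\K}[u][v]=\overline{\K}[u,v]$ to $\overline{\K}[v]$ and to pin down $\lambda$. Since $f_1^0,f_2^0$ do not involve $u$, their gcd in $\overline{\K}[u,v]$ agrees, up to a unit (a nonzero element of $\overline{\K}$), with their gcd in $\overline{\K}[v]$; in particular the right-hand side is $u$-free, and so is $g(\aao,\gamma^0,v)$. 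Comparing degrees in $u$ on both sides forces $\lambda(u)\in\overline{\K}\setminus\{0\}$, and absorbing $\lambda$ and the unit into the phrase ``up to multiplication by nonzero elements of $\overline{\K}$'' yields the stated gcd equality.

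I do not expect a genuine obstacle, since the univariate situation is strictly simpler than Lemma~\ref{lem-gcd}; the only point needing a little care is the bookkeeping that $\Omega_{\mathrm{gcd}_{\FF[v]}(f_1,f_2)}$ is literally the $u$-free instance of Def.~\ref{def-gcd}, after which the reduction is mechanical. Alternatively, one can avoid the dummy variable altogether and rerun the proof of Lemma~\ref{lem-gcd} verbatim with $u$ absent: from $f_i^{0}=f_i^{*\,0}\,g^0$ with $\deg_v(g^0)=\deg_v(g)$ (guaranteed by $B^0\neq 0$) and $\res_v(f_1^{*\,0},f_2^{*\,0})=R^0\neq 0$ (guaranteed by $A_1^0,A_2^0\neq 0$, via~\cite[Lemma 4.3.1]{winkler}), one concludes $\gcd_{\overline{\K}[v]}(f_1^{*\,0},f_2^{*\,0})=1$ and hence $g^0=\gcd_{\overline{\K}[v]}(f_1^0,f_2^0)$ up to a nonzero constant.
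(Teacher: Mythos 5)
Your proposal is correct and follows exactly the route the paper intends: the paper states Corollary~\ref{cor-lemma-gcd} without a separate proof, presenting it as the univariate instance of Lemma~\ref{lem-gcd}, which is precisely your reduction (and your alternative of rerunning the lemma's proof with $u$ absent is the same argument unwound). The only point of care you flag — that $\lambda(u)$ must be a nonzero constant because the gcd of $u$-free polynomials in $\overline{\K}[u,v]$ is $u$-free and $g^0\neq 0$ — is handled correctly.
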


We generalize these results to several univariate polynomials with coefficients in $\FF$.

\begin{definition}\label{def:gcd-several}
Let $f_1,\ldots,f_r\in \FF[v]\setminus\{0\}$, $g=\gcd_{\FF[v]}(f_1,\ldots,f_r)$ and $A$ the leading coefficient of $g$ w.r.t. $v$. We consider the polynomial $f_Z:=f_2+f_3Z+\cdots +f_{r} Z^{r-2}\in \FF[Z][v]$ where $Z$ is a new variable. We define
\[ \Omega_{\mathrm{gcd}_{\FF[v]}(f_1,\ldots,f_r)}:= \Omega_{\mathrm{gcd}_{\FF[Z][v]}(f_1,f_Z)} \cap \Omega_{\mathrm{def}(g)}\cap \Omega_{\mathrm{nonZ}(A)}.\]
\end{definition}
 
\begin{remark}\label{rem-gcd-generalized}
Observe that if $r=2$ in Def.~ \ref{def:gcd-several}, then Def.~\ref{def-gcd} and~\ref{def:gcd-several} coincide. In addition, since $f_1$ does not depend on $Z$,  there exists $\mu\in \FF\setminus \{0\}$ such that $\gcd_{\FF[Z][v]}(f_1,F_Z)=\mu \gcd_{\FF[v]}(f_1,\ldots,f_r)$. Similarly if the polynomials are taken in $\overline{\K}[v]$.
\end{remark}

\begin{theorem}\label{theorem-lemma-gcd-several-pol}
Let $f_1,\ldots,f_r$ be as in Def.~\ref{def:gcd-several}. Let $g=\gcd_{\FF[v]}(f_1,\ldots,f_r)$. For $\aao\in \Omega_{\mathrm{gcd}_{\FF[v]}(f_1,\ldots,f_r)},$ it holds that, up to multiplication by non-zero elements in $\overline{\K}$,
\[ g(\aao,\gamma^0,v)=\gcd_{\overline{\K}[v]}(f_1(\aao,\gamma^0,v),\ldots,f_r(\aao,\gamma^0,v)). \]
Moreover, $\deg_v(g(\aao,\gamma^0,v))=\deg_v(g(\aa,\gamma,v)).$
\end{theorem}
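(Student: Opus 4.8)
The plan is to reduce the case of $r$ polynomials to the two-polynomial situation already settled in Lemma~\ref{lem-gcd}, which is exactly what the auxiliary polynomial $f_Z\in\FF[Z][v]$ and the set $\Omega_{\mathrm{gcd}_{\FF[Z][v]}(f_1,f_Z)}$ in Definition~\ref{def:gcd-several} are designed for. First I would set $\tilde{g}:=\gcd_{\FF[Z][v]}(f_1,f_Z)$ and invoke Remark~\ref{rem-gcd-generalized} to get $\tilde{g}=\mu\,g$ for some $\mu\in\FF\setminus\{0\}$; in particular $\tilde g$ is free of $Z$ (it lies in $\FF[v]$) and $\deg_v\tilde g=\deg_v g$. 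This is the key structural point: packing $f_2,\dots,f_r$ as the $Z$-coefficients of $f_Z$ turns ``common divisor of all $f_i$'' into ``common divisor of $f_1$ and $f_Z$'', since a $Z$-free polynomial divides $f_Z$ in $\FF[Z][v]$ if and only if it divides each $f_i$.

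Next, since $\aao$ lies in $\Omega_{\mathrm{gcd}_{\FF[Z][v]}(f_1,f_Z)}$ (the first factor of $\Omega_{\mathrm{gcd}_{\FF[v]}(f_1,\dots,f_r)}$), I would apply Lemma~\ref{lem-gcd} to the pair $f_1,f_Z$ with the roles $u=Z$. Using that $f_1$ and $\tilde g$ are $Z$-free, this yields some $\lambda(Z)\in\overline{\K}[Z]\setminus\{0\}$ with
\[
\lambda(Z)\,\tilde g(\aao,\gamma^0,v)=\gcd_{\overline{\K}[Z][v]}\big(f_1(\aao,\gamma^0,v),\,f_Z(\aao,\gamma^0,Z,v)\big),
\]
together with $\deg_v\tilde g(\aao,\gamma^0,v)=\deg_v\tilde g$. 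I would then use Remark~\ref{rem-gcd-generalized} a second time, now over $\overline{\K}$ (legitimate because $f_1(\aao,\gamma^0,v)\neq 0$ is $Z$-free), to rewrite the right-hand side as $\mu^0\,\gcd_{\overline{\K}[v]}(f_1^0,\dots,f_r^0)$ with $\mu^0\in\overline{\K}\setminus\{0\}$. Comparing the two sides, the right-hand side is $Z$-free while the left is $\lambda(Z)$ times a $Z$-free nonzero polynomial, so $\lambda(Z)$ must be a nonzero constant. This gives $\gcd_{\overline{\K}[v]}(f_1^0,\dots,f_r^0)=c'\,\tilde g(\aao,\gamma^0,v)$ for some $c'\in\overline{\K}\setminus\{0\}$, and in particular $\deg_v\gcd_{\overline{\K}[v]}(f_1^0,\dots,f_r^0)=\deg_v\tilde g=\deg_v g$.

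It remains to pass from $\tilde g$ back to $g$ itself. For this I would argue directly by divisibility: writing $f_i=g\,h_i$ in $\FF[v]$ and using that $\aao\in\Omega_{\mathrm{def}(f_1)}\cap\Omega_{\mathrm{def}(f_Z)}\subseteq\bigcap_i\Omega_{\mathrm{def}(f_i)}$ (the inclusion holds because $\mathcal{D}(f_Z)$ collects the denominators of $f_2,\dots,f_r$) together with $\Omega_{\mathrm{def}(g)}$ and $\Omega_{\mathrm{nonZ}(A)}$, where $A$ is the leading coefficient of $g$ w.r.t. $v$, the quotients $h_i$ specialize and $f_i^0=g^0\,h_i^0$, since $A^0\neq 0$ makes the exact division survive specialization. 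Hence $g^0\mid f_i^0$ for every $i$, so $g^0\mid\gcd_{\overline{\K}[v]}(f_1^0,\dots,f_r^0)$. As $\Omega_{\mathrm{nonZ}(A)}$ also forces $\deg_v g^0=\deg_v g$, which equals $\deg_v\gcd_{\overline{\K}[v]}(f_1^0,\dots,f_r^0)$ by the previous step, the divisor $g^0$ and the gcd have equal $v$-degree over the field $\overline{\K}$ and are therefore associates; this gives $g(\aao,\gamma^0,v)=c\,\gcd_{\overline{\K}[v]}(f_1(\aao,\gamma^0,v),\dots,f_r(\aao,\gamma^0,v))$ for some $c\in\overline{\K}\setminus\{0\}$, and simultaneously the ``Moreover'' degree claim.

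I expect the main obstacle to be the bookkeeping around well-definedness and the two passages through Remark~\ref{rem-gcd-generalized}: one must ensure that all $f_i^0$ are simultaneously well-defined (the reason $\Omega_{\mathrm{def}(f_Z)}$ is built into the set) and that the $\overline{\K}$-version of the Remark is applicable, which needs $f_1^0\neq 0$ and the $Z$-freeness of the specialized gcd to force $\lambda(Z)$ constant. The genuinely substantive input is Lemma~\ref{lem-gcd}, whose resultant non-vanishing condition $R^0\neq 0$ is precisely what prevents the gcd degree from increasing under specialization; everything else is the reduction through $Z$ and the elementary divisibility argument.
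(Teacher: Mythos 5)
Your proof is correct and follows essentially the same route as the paper: pack $f_2,\ldots,f_r$ into $f_Z$, apply Lemma~\ref{lem-gcd} to the pair $(f_1,f_Z)$ with $u=Z$, and use Remark~\ref{rem-gcd-generalized} on both the generic and the specialized side to identify the $Z$-free gcd with $\gcd_{\overline{\K}[v]}(f_1^0,\ldots,f_r^0)$ and force $\lambda(Z)$ to be constant. The only (harmless) divergence is the final passage from $\tilde g^{\,0}$ to $g^0$: the paper specializes the factor $\mu$ in $g^*=\mu g$ and checks $\mu^0\neq 0$ via the non-vanishing of the leading coefficients $B$ and $A$, whereas you recover $g^0$ by a divisibility-plus-degree-count argument using $\Omega_{\mathrm{def}(g)}$ and $\Omega_{\mathrm{nonZ}(A)}$ --- both use exactly the ingredients built into $\Omega_{\mathrm{gcd}_{\FF[v]}(f_1,\ldots,f_r)}$.
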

\begin{proof}
 Let $g^*=\gcd_{\FF[Z][v]}(f_1,f_Z)$, and $\aao\in \Omega_{\mathrm{gcd}_{\FF[v]}(f_1,\ldots,f_r)}$. 
By Remark~\ref{rem-gcd-generalized},  $g^*\in \FF[v]$. 
By Lemma~\ref{lem-gcd}, there exists $\lambda\in \overline{\K}[Z]\setminus \{0\}$ such that 
\[ \lambda(Z)\,g^{*\,0} = \gcd_{\overline{\K}[Z][v]}(f_{1}^{0},f_{Z}^{0})\in \overline{\K}[v], \]
and $\deg_v(g^{*\,0})=\deg_v(g^*).$
Note that, since $g^{*\,0}$ does not depend on $Z$, then $\lambda \in \overline{\K}\setminus \{0\}$. On the other hand, by Remark \ref{rem-gcd-generalized}, there exists $\mu\in \FF\setminus \{0\}$ such that $g^*=\mu g$. Furthermore, $\mu^0:=\mu(\aa^0,\gamma^0)$ is well defined. Moreover,  since the leading coefficients of $g^*$ and $g$ do no vanish when specialized, then $\mu^0\neq 0$. Thus, up to multiplication by non-zero elements in $\overline{\K}$, it holds that
\[    g^0=g^{*\,0}=\gcd_{\overline{\K}[Z][v]}(f_{1}^{0},f_{Z}^{0})=\gcd_{\overline{\K}[v]}(f_{1}^{0},\ldots,f_{r}^{0}). \]
Moreover, by definition of the open set, the leading coefficient of $g$ w.r.t. $v$ does not vanish at $\aa^0$. Therefore, $\deg_v(g^0)=\deg_v(g ).$
\end{proof}

Our next step is to analyze the squarefreeness.

\begin{definition}\label{def:sqfree}
Let $f\in \FF[v]\setminus \FF$ be squarefree. Let $R$ be the discriminant of $f$ w.r.t. $v$ and let $A$ be the leading coefficient of $f$ w.r.t. $v$. We define the open subset
\[ \Omega_{\mathrm{sqfree}(f)}:=\Omega_{\mathrm{def}(f)} \cap \Omega_{\mathrm{nonZ}(R)}\cap \Omega_{\mathrm{NonZ}(A))}.\]
\end{definition}

\begin{lemma}\label{lem:sqfree}
Let $f\in \FF[v]\setminus \FF$ be squarefree. If $\aao\in  \Omega_{\mathrm{sqfree}(f)}$, then $\deg_{v}(f(\aa,\gamma,v))= \deg_{v}(f(\aao,\gamma^0,v))$ and $f(\aao,\gamma^0,v)$ is squareefree.
\end{lemma}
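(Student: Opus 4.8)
The statement has two parts, and the plan is to dispatch the degree equality first and then squarefreeness, in both cases reducing everything to the nonvanishing conditions that are built into $\Omega_{\mathrm{sqfree}(f)}$. Write $A:=\lc(f)$ for the leading coefficient of $f$ with respect to $v$, so that $f=A\,v^{n}+(\text{lower order terms in }v)$ with $n=\deg_v(f)$. Since $\aao\in\Omega_{\mathrm{sqfree}(f)}\subset\Omega_{\mathrm{def}(f)}$, Lemma~\ref{lem:defVan}(1) guarantees that $f(\aao,\gamma^0,v)$ is well defined, i.e. every coefficient of $f$ specializes. Since moreover $\aao\in\Omega_{\mathrm{nonZ}(A)}$, Lemma~\ref{lem:defVan}(2) gives $A(\aao,\gamma^0)\neq 0$, so the leading coefficient survives the specialization. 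Hence $f(\aao,\gamma^0,v)$ still has degree $n$ in $v$, which is the first assertion.

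For squarefreeness I would use the discriminant criterion. Because $\FF$ is a field of characteristic zero, $f$ being squarefree in $\FF[v]$ is equivalent to $\gcd_{\FF[v]}(f,\partial_v f)=1$, equivalently to $R:=\mathrm{disc}_v(f)\neq 0$; recall the identity
\[ \mathrm{disc}_v(f)=\frac{(-1)^{n(n-1)/2}}{A}\,\res_v(f,\partial_v f), \]
which exhibits $R$, up to the nonzero factor $A$, as the resultant $\res_v(f,\partial_v f)$. The crucial point is that this resultant specializes well at $\aao$. Indeed, the leading coefficient of $f$ w.r.t. $v$ is $A$, with $A(\aao,\gamma^0)\neq 0$, and the leading coefficient of $\partial_v f$ is $n\,A$, which specializes to $n\,A(\aao,\gamma^0)\neq 0$ (here characteristic zero is used). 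Differentiation w.r.t. $v$ commutes with specialization, so $\partial_v\big(f(\aao,\gamma^0,v)\big)=(\partial_v f)(\aao,\gamma^0,v)$. Since both leading coefficients are preserved, by~\cite[Lemma 4.3.1]{winkler} the resultant commutes with the specialization, i.e. $\res_v(f,\partial_v f)(\aao,\gamma^0)=\res_v\big(f(\aao,\gamma^0,v),\partial_v(f(\aao,\gamma^0,v))\big)$. Combining this with $A(\aao,\gamma^0)\neq 0$ and the discriminant identity applied to $f(\aao,\gamma^0,v)$ (which has the same degree $n$), we obtain that $\mathrm{disc}_v\big(f(\aao,\gamma^0,v)\big)$ equals $R(\aao,\gamma^0)$ up to a nonzero factor in $\overline{\K}$.

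To conclude, since $\aao\in\Omega_{\mathrm{nonZ}(R)}$, Lemma~\ref{lem:defVan}(2) yields $R(\aao,\gamma^0)\neq 0$, hence $\mathrm{disc}_v\big(f(\aao,\gamma^0,v)\big)\neq 0$; as $f(\aao,\gamma^0,v)$ is a polynomial of positive degree over the field $\overline{\K}$ with nonzero leading coefficient, a nonvanishing discriminant forces it to have no repeated roots, i.e. to be squarefree. I expect the main obstacle to be precisely the commutation of the discriminant (equivalently of $\res_v(f,\partial_v f)$) with specialization: this is exactly the step that can fail when the $v$-degree drops, and it is the reason $\Omega_{\mathrm{nonZ}(A)}$ is included in the definition of $\Omega_{\mathrm{sqfree}(f)}$, so that the leading coefficient—and thus the degree governing the resultant—is kept fixed and Winkler's specialization lemma applies verbatim.
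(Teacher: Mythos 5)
Your proof is correct and follows essentially the same route as the paper: both establish the degree equality from $\Omega_{\mathrm{def}(f)}$ and $\Omega_{\mathrm{nonZ}(A)}$, and both deduce squarefreeness by showing the discriminant (via $\res_v(f,\partial_v f)$ and Winkler's Lemma 4.3.1 on specialization of resultants) remains nonzero because $\aao\in\Omega_{\mathrm{nonZ}(R)}$. Your write-up merely makes explicit a detail the paper leaves implicit, namely that the leading coefficient of $\partial_v f$ is $nA$ and also survives specialization in characteristic zero.
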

\begin{proof}
Since $\aao\in\Omega_{\mathrm{def}(f)}$, by Lemma~\ref{lem:defVan}, $f(\aao,\gamma^0,v)$ is well--defined. Since $\aao\in\Omega_{\mathrm{NonZ}(A)}$, $f(\aao,\gamma^0,v)\neq 0$ and the equality of the degrees holds.
Since $\aao\in \Omega_{\mathrm{nonZ}(R)}$, also by Lemma~\ref{lem:defVan},  $R(\aao,\gamma^0,v)$ is well-defined and non--zero. Since $A(\aao,\gamma^0,v)\neq 0$, by~\cite[Lemma 4.3.1]{winkler}, the discriminant of $f(\aao,\gamma^0,v)$ is not zero. Then, by~\cite[Theorem 4.4.1]{winkler}, $f(\aao,\gamma^0,v)$ is squarefree.
\end{proof}

\subsection{Specialization of the curve defining polynomial}\label{subsec:defpol}
In this subsection, we deal with the specialization of defining polynomials of irreducible plane curves. Let $G\in \FF[x,y]\setminus \FF$ be irreducible over $\overline{\FF}$ of total degree $d$. 
In the following, let $G$ be written as
\begin{equation}\label{eq-G}
G=g_d(x,y)+\cdots+g_0(x,y)
\end{equation}
where $g_i$ is either the zero polynomial or a form of degree $i$.

\begin{definition}\label{def:omegaG}
We associate to $G$ the open subset (see~\eqref{eq-G})
$\Omega_G:=\Omega_{\mathrm{def}(G)}\cap \Omega_{\mathrm{nonZ}(g_d)}.$
\end{definition}

\begin{lemma}\label{lem:omegaG} Let $G$ be as above and let $\aao\in \Omega_G$. Then
\begin{enumerate}
\item $G(\aao,\gamma^0, x,y)$ is well--defined and $\deg(G(\aao,\gamma^0, x,y))=\deg(G)$;
\item $G(\aao,\gamma^0,  x,y)^{h}=G^{h}(\aao, \gamma^0, x,y,z)$;
\item the partial derivatives of $G^h$, of any order, specialize properly.
\end{enumerate}
\end{lemma}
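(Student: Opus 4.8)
\section*{Proof proposal}

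The plan is to settle the three assertions in order, reading off each one from the explicit membership $\aao\in \Omega_G=\Omega_{\mathrm{def}(G)}\cap \Omega_{\mathrm{nonZ}(g_d)}$ (see Def.~\ref{def:omegaG}) together with Lemma~\ref{lem:defVan}. The bulk of the work is bookkeeping about how evaluation at $\aao$ interacts with the graded decomposition~\eqref{eq-G} of $G$; the one genuinely delicate point, on which both (2) and (3) ultimately rest, is that the \emph{total degree} is preserved, and this is exactly what the factor $\Omega_{\mathrm{nonZ}(g_d)}$ is there to guarantee.

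For (1), since $\aao\in \Omega_{\mathrm{def}(G)}$, Lemma~\ref{lem:defVan}(1) gives at once that every coefficient of $G$, viewed as a polynomial in $x,y$ over $\FF$, is well-defined at $\aao$; hence $G(\aao,\gamma^0,x,y)$ is a well-defined element of $\overline{\K}[x,y]$ whose total degree is automatically at most $d$. To exclude a degree drop I would apply Lemma~\ref{lem:defVan}(2) to the leading form $g_d$: because $\aao\in \Omega_{\mathrm{nonZ}(g_d)}$, we obtain $g_d(\aao,\gamma^0,x,y)\neq 0$, so the degree-$d$ homogeneous part of the specialized polynomial does not vanish. Therefore $\deg(G(\aao,\gamma^0,x,y))=d=\deg(G)$.

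For (2), I would write the homogenization explicitly from~\eqref{eq-G} as
\[ G^h(x,y,z)=\sum_{i=0}^{d} g_i(x,y)\,z^{d-i}, \]
so that specializing term by term yields $G^h(\aao,\gamma^0,x,y,z)=\sum_{i=0}^{d} g_i(\aao,\gamma^0,x,y)\,z^{d-i}$. On the other hand, part (1) ensures that the specialized affine polynomial $G(\aao,\gamma^0,x,y)=\sum_{i=0}^{d} g_i(\aao,\gamma^0,x,y)$ again has total degree exactly $d$, so homogenizing it attaches $z^{d-i}$ to its degree-$i$ part and produces precisely the same expression. Comparing the two identities gives $G(\aao,\gamma^0,x,y)^h=G^h(\aao,\gamma^0,x,y,z)$; note that had the degree collapsed, the two homogenizations would have used different powers of $z$ and the identity would fail, which is why the degree-preservation of (1) is the crux here.

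For (3), the key observation is that each iterated partial derivative $\partial^{\,i+j+k}G^h/(\partial x^{i}\,\partial y^{j}\,\partial z^{k})$ acts on monomials by the power rule, so its coefficients are $\Z$-linear combinations of the coefficients of $G^h$; these coincide with the coefficients of $G$ and are thus well-defined at $\aao$ by Lemma~\ref{lem:defVan}(1). Since evaluation at $\aao$ is a ring homomorphism on the elements of $\FF$ whose (canonical-form) denominators do not vanish there, it commutes with these $\Z$-linear operations, and hence differentiation and specialization commute to any order. This is exactly the assertion that the partial derivatives of $G^h$ specialize properly. I expect no real obstacle beyond the degree argument already isolated in (1): once that is in place, parts (2) and (3) are formal consequences of the fact that homogenization and differentiation are determined coefficientwise by $\Z$-linear data that survive the evaluation homomorphism.
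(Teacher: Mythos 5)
Your proposal is correct and follows essentially the same route as the paper's own proof: well-definedness from $\Omega_{\mathrm{def}(G)}$ via Lemma~\ref{lem:defVan}, degree preservation from $\Omega_{\mathrm{nonZ}(g_d)}$, term-by-term comparison of the two homogenizations, and the observation that differentiation introduces no denominators and commutes with specialization. The only difference is that you spell out the bookkeeping in more detail than the paper does.
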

\begin{proof}
Since $\aao\in \Omega_{\mathrm{def}(G)}$, by Lemma~\ref{lem:defVan}, $G(\aao,\gamma^0, x,y)$ is well--defined and, since $\aao\in \Omega_{\mathrm{nonZ}(g_d)}$, the equality on the degree holds. Now, since the total degree $d=\deg(G)$ is preserved under the specialization, $G(\aao,\gamma^0, x,y)^h=(g_d(\aao,\gamma^0,x,y)+\cdots+g_0(\aao,\gamma^0,x,y))^h=g_d(\aao,\gamma^0,x,y)+\cdots+g_0(\aao,\gamma^0,x,y)z^{d}=G(\aao,\gamma^0,x,y,z)$. Finally, since derivatives of $G^h$ do not introduce denominators, all derivatives are well-defined when specializing at $\aao$, and the derivative of the specialization is the specialization of the derivative.
\end{proof}

\subsection{Specialization of families of points}\label{subsec-spec-families}
Let us now deal with the specialization of conjugate families of points appearing in the standard decomposition of the singular locus of the curve. In Subsections~\ref{subsec-families} and~\ref{subsec-standard-decomp} of the appendix, a description of these concepts appears; see in particular Definitions~\ref{def-family} and~\ref{def-stand-decomp} and Remark~\ref{rem-standard-decomp-final}.

Let $G$ and $G^h$ be as in Subsection~\ref{subsec:defpol}. For $\aao\in \S$ such that $G(\aao,x,y)\not\in \overline{\K}$, $G(\aao,x,y)$ defines a plane curve over $\overline{\K}$. Let $\Cu(G)$ denote the curve defined by $G(\aa,\gamma,x,y)$ over $\overline{\FF}$, and $\Cu(G,\aao)$ the curve defined by $G(\aao,\gamma^0,x,y)$ over $\overline{\K}$. Similarly for the projective curves.

Conjugate families of a curve are referred to a field. The conjugate families of $\Cu(G^h)$ will be over $\FF$. When we specialize $\aa$ we need to have a reference field where the conjugation of the points is defined. This motivates the following definition.

\begin{definition}\label{def-Ka} For $\aao\in \S$, we define $\K_{\aao}$ as the smallest subfield of $\overline{\K}$ containing the coefficients of $G(\aao,\gamma^0,x,y)$. Moreover, if $\mathcal{F}=\{(f_1:f_2:f_3)\}_{m}$ is an $\FF$--conjugate family, and $\aao\in \S$ is such that $\gamma^0, f_1(\aao,t), f_2(\aao,t), f_3(\aao,t), m(\aao,t)$ are well--defined, we denote by $\mathcal{F}(\aao)$ the specialization of $\mathcal{F}$ at $\aao$ (and $\gamma^0$).
\end{definition}

Let $\Stand(G^h)$ denote the $\FF$--standard decomposition of the singular locus of $\Cu(G^h)$ obtained using the process described in Subsection~\ref{subsec-standard-decomp}; see also Def.~\ref{def-stand-decomp} and Remark~\ref{rem-standard-decomp-final}.
\begin{equation}\label{eq-SingularLocus}
\Stand(G^h)=\bigcup_{m\in \mathcal{A}_a} \{(f_{1,m}:f_{2,m}:1)\}_{m} \, \cup \,  \bigcup_{m\in \mathcal{A}_{\infty}} \{(L_{1,m}:L_{2,m}:0)\}_{m}
\end{equation}
where $f_{i,m}(t),m(t)\in \FF[t]$, $L_{i,m}(t)\in \K[t]$ with $\deg(L_{i,m}(t))\leq 1$, $\gcd(L_{1,m}(t),L_{2,m}(t))=1$, and where $\mathcal{A}_a$ and $\mathcal{A}_{\infty}$ are finite sets of irreducible polynomials in $\FF[t]$. By abuse of notation, we will write $\mathcal{F}\in\Stand(G^h)$ to refer to the families in $\Stand(G^h)$.

\begin{definition}\label{def-defFam} Let $\mathcal{F}:=\{(f_{1,m}:f_{2,m}:1)\}_{m}\in \Stand(G^h)$ be an irreducible $\FF$-family of affine singularities of $\Cu(G^h)$ (see~\eqref{eq-SingularLocus}).
Let $A$ be the product of the leading coefficients w.r.t. $t$ of $m, f_{1,m}, f_{2,m}$. We associate to $\mathcal{F}$ the open set
\[ \Omega_{\mathrm{def}(\mathcal{F})}:= \displaystyle{  \Omega_{\mathrm{gcd}_{\FF[t]}(f_{1,m},f_{2,m},m)} \cap \Omega_{\mathrm{sqfree}(m)} \cap \Omega_{\mathrm{NonZ}(A)} \cap \, \Omega_{G}}. \]
 Let $\mathcal{F}:=\{(L_{1,m}:L_{2,m}:0)\}_{m}\in \Stand(G^h)$ be an irreducible $\FF$-family of singularities of $\Cu(G^h)$ at infinity (see~\eqref{eq-SingularLocus}).
Let $A$ be the leading coefficient of $m$ w.r.t. $t$. We associate to $\mathcal{F}$ the open set
\[ \Omega_{\mathrm{def}(\mathcal{F})}:=  \Omega_{\mathrm{gcd}_{\FF[t]}(L_{1,m},L_{2,m},m)} \cap \displaystyle{\Omega_{\mathrm{sqfree}(m)} \cap \Omega_{\mathrm{NonZ}(A)} \cap \, \Omega_{G}}. \]
\end{definition}

We start our analysis with a technical lemma.  In the remaining part of this section, we might write for the specialization of a given polynomial, or rational function, $P$ at $\aao$ simply $P^0$.  

\begin{lemma}\label{lem:spec-mod} Let $H,m\in \FF[t]$. Let $R$ be the remainder of the division of $H$ by $m$ w.r.t. $t$, and let $A$ be the leading coefficient of $m$ w.r.t. $t$. If $H(\aao,t), m(\aao,t)$ are well--defined and $A(\aao)\neq 0$, then $H(\aao,t)=R(\aao,t)$ mod $m(\aao,t)$.
\end{lemma}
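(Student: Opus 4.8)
The plan is to reduce the statement to the stability of \emph{pseudo-division} under specialization, thereby avoiding the delicate question of whether the ordinary quotient and remainder specialize coefficient-by-coefficient in canonical form. Write $n=\deg_t(H)$ and $d=\deg_t(m)$, so that $A$ is the coefficient of $t^{d}$ in $m$. Since $m(\aao,t)$ is well-defined no coefficient of $m$ gains degree under specialization, and since $A(\aao)\neq 0$ the coefficient of $t^{d}$ survives; hence $\deg_t(m(\aao,t))=d$, which is what makes ``remainder modulo $m(\aao,t)$'' meaningful. If $n<d$ the claim is immediate: then $R=H$, so $R(\aao,t)=H(\aao,t)$ has degree $\le n<d$ and is its own remainder. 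So I assume $n\ge d$.

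First I would invoke pseudo-division of $H$ by $m$: there exist $\overline{Q},\overline{R}\in\FF[t]$, whose coefficients are polynomials, with integer coefficients, in the coefficients of $H$ and of $m$, such that
\[
A^{\,n-d+1}\,H=\overline{Q}\,m+\overline{R},\qquad \deg_t(\overline{R})<d .
\]
Because $\FF$ is a field (so $A$ is not a zero divisor), uniqueness of division forces $\overline{Q}=A^{\,n-d+1}Q$ and $\overline{R}=A^{\,n-d+1}R$, where $Q,R$ are the ordinary quotient and remainder. The decisive point is that the well-definedness of $H(\aao,t)$ and $m(\aao,t)$ means every coefficient of $H$ and of $m$ specializes at $\aao$; as the coefficients of $\overline{Q}$ and $\overline{R}$ are polynomial expressions in these, $\overline{Q}$ and $\overline{R}$ specialize as well. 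Specialization being a ring homomorphism on the subring of $\FF$ of elements defined at $\aao$, the displayed identity then specializes to
\[
A(\aao)^{\,n-d+1}\,H(\aao,t)=\overline{Q}(\aao,t)\,m(\aao,t)+\overline{R}(\aao,t)
\]
in $\overline{\K}[t]$.

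Next I would read off the conclusion. Since $\deg_t(\overline{R}(\aao,t))\le\deg_t(\overline{R})<d=\deg_t(m(\aao,t))$ and $A(\aao)^{\,n-d+1}$ is a nonzero scalar, the last identity exhibits $\overline{R}(\aao,t)$ as the remainder of $A(\aao)^{\,n-d+1}H(\aao,t)$ modulo $m(\aao,t)$. Dividing the resulting congruence by $A(\aao)^{\,n-d+1}\neq 0$ gives
\[
H(\aao,t)\equiv \overline{R}(\aao,t)/A(\aao)^{\,n-d+1}\pmod{m(\aao,t)} .
\]
Finally, from $\overline{R}=A^{\,n-d+1}R$ in $\FF[t]$ one has $R=\overline{R}/A^{\,n-d+1}$, so substituting $\aa=\aao$ and $\gamma=\gamma^{0}$ identifies $\overline{R}(\aao,t)/A(\aao)^{\,n-d+1}$ with $R(\aao,t)$, which is exactly the asserted congruence $H(\aao,t)=R(\aao,t)\bmod m(\aao,t)$.

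I expect the main obstacle to be the well-definedness of $R(\aao,t)$ itself. One is tempted to specialize the ordinary identity $H=Qm+R$ directly, but the naive division algorithm divides by $A$ at every step, and an element such as (leading coefficient of $H$)$/A$ need not specialize in the canonical-form sense of Definition~\ref{def-open1}: clearing the conjugates of $\gamma$ can introduce a denominator, a factor of $\No(A)$, that vanishes at $\aao$ even though $A(\aao)\neq 0$, since $A$ may vanish at a \emph{different} root of the minimal polynomial of $\gamma$. Pseudo-division is precisely what circumvents this, keeping the whole computation polynomial in the data so that $\overline{R}$ always specializes, and confining the only true division to the final step by the scalar $A(\aao)^{\,n-d+1}\neq 0$. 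The care needed is therefore to read $R(\aao,t)$ as this honest substitution value $\overline{R}(\aao,t)/A(\aao)^{\,n-d+1}$ rather than as a formal evaluation of a canonical form.
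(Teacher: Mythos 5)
Your proof is correct, and it is a genuine refinement of the argument the paper actually gives. The paper's proof writes $H=Q\,m+R$ for the ordinary quotient and remainder and simply asserts that, because $A(\aao)\neq 0$, the specializations $Q(\aao,t)$ and $R(\aao,t)$ are well--defined, then specializes the identity. That assertion rests on the standard fact that the division algorithm only introduces powers of $A$ into denominators; your route through pseudo--division, $A^{\,n-d+1}H=\overline{Q}\,m+\overline{R}$, makes this completely explicit by keeping everything polynomial in the coefficients of $H$ and $m$ and postponing the single division to the nonzero scalar $A(\aao)^{\,n-d+1}$. The subtlety you flag is real in the paper's canonical--form setting: writing a coefficient $P/A^{k}$ of $R$ in the form \eqref{eq-f} forces a denominator dividing (a multiple of) $\num(\No(A))^{k}\in\K[\aa]$, and $\No(A)(\aao)$ can vanish because $A$ vanishes at a conjugate $\gamma_i^0\neq\gamma^0$ even though $A(\aao,\gamma^0)\neq 0$; so the hypothesis $A(\aao)\neq 0$ alone does not literally guarantee well--definedness of $R(\aao,t)$ in the sense of Definition~\ref{def-open1}, and your reading of $R(\aao,t)$ as the substitution value $\overline{R}(\aao,t)/A(\aao)^{\,n-d+1}$ is the right fix. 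It is worth noting that in the places where the lemma is actually invoked (e.g.\ Lemmas~\ref{lem:defF}, \ref{lem:r-fold}, \ref{lem:ord-fold}) the parameter value is taken in a set of the form $\Omega_{\mathrm{nonZ}(A)}$, which forces $\No(\num(A))(\aao)\neq 0$, i.e.\ $A$ nonzero at \emph{all} conjugates, so the paper's direct specialization of $H=Qm+R$ is safe in those applications; your version proves the lemma under the weaker hypothesis exactly as stated.
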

\begin{proof}
Let $Q$ be the quotient of the divison of $H$ by $m$ w.r.t. $t$. So, $H=Q \cdot m +R$ with $\deg_t(R)<\deg_t(m)$.
Since $H^0, m^0$ are well--defined,$\gamma^0$ exists. Since $A^0\neq 0$, then $Q^0,R^0$ are well--defined. Moreover, $\deg_t(m)=\deg_t(m^0)$. Then $H^0=Q^0 m^0 +R^0$ with $$\deg_t(R^0)\leq \deg_t(R)<\deg_t(m)=\deg_t(m^0).$$ This concludes the proof.
\end{proof}

\begin{lemma}\label{lem:defF}
Let $\mathcal{F}\in\Stand(G^h)$ be an irreducible $\FF$--family of $\Cu(G^h)$ (see~\eqref{eq-SingularLocus}). If $\aao\in \Omega_{\mathrm{def}(\mathcal{F})}$, then $\mathcal{F}(\aao)$ is a $\K_{\aao}$--family of points of $\Cu(G^h,\aao)$ and $\#(\mathcal{F})=\#(\mathcal{F}(\aao))$.
\end{lemma}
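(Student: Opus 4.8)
The plan is to check, one by one, that each polynomial object attached to $\mathcal{F}$ specializes as expected at $\aao$, and then to transfer the three defining features of a family of points on the curve — well-definedness, incidence with the curve, and the cardinality — from the generic to the specialized situation. I would carry out the argument for the affine families $\{(f_{1,m}:f_{2,m}:1)\}_m$; the families at infinity $\{(L_{1,m}:L_{2,m}:0)\}_m$ are handled verbatim, replacing $f_{i,m}$ by $L_{i,m}$ and the last coordinate $1$ by $0$, and using that a coordinate map of degree $\le 1$ with $\gcd(L_{1,m},L_{2,m})=1$ is injective on $\mathbb{P}^1$.

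First I would unwind Definition~\ref{def-defFam}: $\Omega_{\mathrm{def}(\mathcal{F})}$ is contained in $\Omega_G$, in $\Omega_{\mathrm{gcd}_{\FF[t]}(f_{1,m},f_{2,m},m)}$, in $\Omega_{\mathrm{sqfree}(m)}$ and in $\Omega_{\mathrm{NonZ}(A)}$, where $A$ is the product of the leading coefficients of $m,f_{1,m},f_{2,m}$. Lemma~\ref{lem:omegaG} then gives that $G$ and $G^h$ specialize properly, that $G(\aao,\gamma^0,x,y)^h=G^h(\aao,\gamma^0,x,y,z)$, and that the total degree is preserved. The $\Omega_{\mathrm{def}}$-pieces bundled inside the gcd set, together with Lemma~\ref{lem:defVan}(1), make $m(\aao,t),f_{1,m}(\aao,t),f_{2,m}(\aao,t)$ well-defined, while $\Omega_{\mathrm{NonZ}(A)}$ and Lemma~\ref{lem:defVan}(2) keep their leading coefficients nonzero, so their $t$-degrees are preserved. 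Hence $\mathcal{F}(\aao)$ is well-defined in the sense of Definition~\ref{def-Ka}; since its defining polynomials are produced from the coefficients of $G$ by the construction of $\Stand(G^h)$, their specializations lie in $\K_{\aao}$, so $\mathcal{F}(\aao)$ is a $\K_{\aao}$-family.

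Next I would prove incidence and fix the number of roots. From $\Omega_{\mathrm{sqfree}(m)}$ and Lemma~\ref{lem:sqfree}, $m(\aao,t)$ is squarefree with $\deg_t m(\aao,t)=\deg_t m=:n$, so it has exactly $n$ distinct roots. For incidence, set $H(t):=G^h(f_{1,m}(t),f_{2,m}(t),1)\in\FF[t]$. The points of $\mathcal{F}$ lie on $\Cu(G^h)$, so the irreducible $m$ shares a root with $H$ and therefore divides it; the remainder $R$ of $H$ by $m$ is $0$. The coefficients of $H$ are polynomial combinations of those of $G$ and of $f_{1,m},f_{2,m}$, so $H(\aao,t)$ is well-defined and, by Lemma~\ref{lem:omegaG}(2), equals $G^h(\aao,\gamma^0,f_{1,m}(\aao,t),f_{2,m}(\aao,t),1)$. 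Since the leading coefficient of $m$ does not vanish at $\aao$, Lemma~\ref{lem:spec-mod} yields $H(\aao,t)\equiv R(\aao,t)=0\pmod{m(\aao,t)}$, whence every root $\tau_0$ of $m(\aao,t)$ gives a point $(f_{1,m}(\aao,\tau_0):f_{2,m}(\aao,\tau_0):1)$ of $\Cu(G^h,\aao)$.

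The hard part is the equality $\#(\mathcal{F})=\#(\mathcal{F}(\aao))$. In $\Stand(G^h)$ the parametrization $\tau\mapsto(f_{1,m}(\tau):f_{2,m}(\tau):1)$ is injective on the $n$ roots of $m$, so $\#(\mathcal{F})=n$, and the previous paragraph shows $\mathcal{F}(\aao)$ consists of at most $n$ points, coming from the $n$ distinct roots of $m(\aao,t)$. What remains — and this is the main obstacle — is to rule out collisions under specialization: a priori two roots of $m(\aao,t)$, no longer conjugate over $\K_{\aao}$, could map to the same point. I would control this through the gcd set $\Omega_{\mathrm{gcd}_{\FF[t]}(f_{1,m},f_{2,m},m)}$ via Theorem~\ref{theorem-lemma-gcd-several-pol}, which guarantees that the common-factor structure of $(f_{1,m},f_{2,m},m)$ is preserved, together with the injective normal form of the coordinate maps used in the standard decomposition (for an affine family the abscissa is the parameter, so distinct roots give distinct abscissae and hence distinct points; at infinity the coordinate map has degree $\le 1$, hence is injective). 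Both properties survive specialization because the relevant leading coefficients and resultants are nonzero on $\Omega_{\mathrm{def}(\mathcal{F})}$, giving $\#(\mathcal{F}(\aao))=n=\#(\mathcal{F})$.
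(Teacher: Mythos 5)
Your treatment of well-definedness and of incidence with the specialized curve is correct and follows essentially the same route as the paper: $\Omega_{\mathrm{gcd}}$, $\Omega_{\mathrm{NonZ}(A)}$ and $\Omega_{\mathrm{sqfree}(m)}$ give conditions (1)--(3) of Definition~\ref{def-family} for $\mathcal{F}(\aao)$, and the reduction of $G^h(f_{1,m},f_{2,m},1)$ modulo $m$ combined with Lemma~\ref{lem:spec-mod} gives incidence.

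The gap is in the cardinality argument. First, Theorem~\ref{theorem-lemma-gcd-several-pol} only preserves $\gcd(f_{1,m},f_{2,m},m)=1$ under specialization; this is condition (1) of Definition~\ref{def-family} and says nothing about two distinct roots $\tau_1\neq\tau_2$ of $m(\aao,t)$ colliding to the same point, i.e.\ $f_{i,m}(\aao,\tau_1)=f_{i,m}(\aao,\tau_2)$ for $i=1,2$. Second, your claim that ``for an affine family the abscissa is the parameter'' is not true of the families as they appear in $\Stand(G^h)$ (see~\eqref{eq-SingularLocus}): the normal forms $\{(t:B(t):1)\}_m$ and $\{(1:t:0)\}_m$ of~\eqref{eq-sing-affine} and~\eqref{eq-sing-infinity} are produced in regular-position coordinates, and Step~4 of the construction applies $\mathcal{L}^{-1}$ to them, so a family in $\Stand(G^h)$ has abscissa $f_{1,m}(t)$, not $t$. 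The missing ingredient --- and the actual mechanism in the paper's proof --- is that $\mathcal{L}$ is a linear change of coordinates defined over $\K$, hence unaffected by the specialization of $\aa$: one computes $\#(\mathcal{F})=\#(\mathcal{L}^{-1}(\mathcal{F}))=\deg_t(m)$ using the normal form, then applies the same $\mathcal{L}^{-1}$ to $\mathcal{F}(\aao)$ to get a family of the form $\{(t:B^0:1)\}_{m^0}$ or $\{(1:t:0)\}_{m^0}$, whence $\#(\mathcal{F}(\aao))=\deg_t(m^0)=\deg_t(m)$ because the leading coefficient of $m$ does not vanish at $\aao$. Without routing the injectivity through this $\K$-defined change of coordinates, the collision problem you correctly identified is not actually resolved by the sets in $\Omega_{\mathrm{def}(\mathcal{F})}$.
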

\begin{proof} 
Let $\mathcal{F}=\{(f_{1,m}:f_{2,m}:1)\}_{m}$.
Let us first show that $\mathcal{F}(\aao)$ is a $\K_{\aao}$-family of points of $\Cu(G^h,\aao)$. Since $\aao\in \Omega_{\mathrm{gcd}(f_{1,m},f_{2,m},m)}$ (see Def.~\ref{def:gcd-several}) then $\gamma^0$, $f_{i,m}^0$ and $m^0$ are well-defined and, by Theorem \ref{theorem-lemma-gcd-several-pol}, the $\gcd(f_{1,m}^0, f_{2,m}^0, m^0)=1$.
Furthermore, since $\aao\in \Omega_{\mathrm{NonZ}(A)}$ (see Def.~\ref{def-open1}), the degree of $f_{i,m}$ and $m$ is preserved under the specialization. In addition, since $\aao\in \Omega_{\mathrm{sqfree}(m)}$, it holds that $m^0$ is squarefree (see Lemma~\ref{lem:sqfree}). So the conditions in Def.~\ref{def-family} hold. Furthermore, note that, after specialization, all polynomials are over $\K_{\aao}$. So, $\mathcal{F}(\aao)$ is a family over $\K_{\aao}$. It remains to prove that the points in $\mathcal{F}(\aao)$ are in the specialized curve.
Since $\aao\in \Omega_{G}$, by Lemma~\ref{lem:omegaG}, it holds that 
$(G^0)^h=(G^h)^0$. 
Let $T(\aa,t)=G^h(\aa,f_{1,m},f_{2,m},1)$. Since $\mathcal{F}$ is a family of points in $\Cu(G^h)$, it holds that $T=0$ mod $m$. Since $(G^h)^0$    and $f_{i,m}^0$    are well--defined, then $T^0$ is well--defined too. We know that $m^0$ is well--defined and that the leading coefficient of $m$ in $t$ does not vanish after specialization. Therefore, by Lemma~\ref{lem:spec-mod}, $G^h(\aao,f_{1,m}^0,f_{2,m}^0,1)=0$ modulo $m^0$. Hence, $\mathcal{F}(\aao)$ is a $\K_{\aao}$--family of points of $\Cu(G^h,\aao)$. 

If $\mathcal{F}:=\{(L_{1,m}:L_{2,m}:0)\}_{m}$ all the arguments above apply and, hence, one deduces that $\mathcal{F}(\aao)$ is a $\K_{\aao}$--family of points of $\Cu(G^h,\aao)$.

It remains to prove that $\#(\mathcal{F}(\aao))=\#(\mathcal{F})$. Let $\mathcal{L}$ be the $\K$--linear change of coordinates transforming $\Cu(G)$ in regular position; see Step (1) in the standard decomposition process described in Subsection~\ref{subsec-standard-decomp}. Then, $\#(\mathcal{F})=\#(\mathcal{L}^{-1}(\mathcal{F}))$. Furthermore, $\mathcal{L}^{-1}(\mathcal{F})$ is in the form appearing either in~\eqref{eq-sing-infinity} or in~\eqref{eq-sing-affine}. Therefore, $\#(\mathcal{F})=\deg_t(m)$. Since $\mathcal{L}$ is over $\K$, we may apply it to $\mathcal{F}(\aao)$ and $\mathcal{L}^{-1}(\mathcal{F}(\aao))$ will be of the form either $\{(t:B^0:1)\}_{m^0}$ or $\{(1:t:0)\}_{m^0}$. In both cases, $\#(\mathcal{F}(\aao))=\#(\mathcal{L}^{-1}(\mathcal{F}(\aao)))=\deg_t(m^0)$. Now, the result follows using that $\deg_t(m)=\deg_t(m^0)$.
\end{proof}

\begin{remark}\label{rem-Fa}
Given an $\FF$--family $\mathcal{F}\in\Stand(G^h)$ (see~\eqref{eq-SingularLocus}), and $\aao\in \Omega_{\mathrm{def}(\mathcal{F})}$ (see Def.~\ref{def-defFam}), we observe that, even though $\mathcal{F}$ is irreducible, $\mathcal{F}(\aao)$ may be reducible. We are interested in working with irreducible specialized families. So, factoring over $\K_{\aao}$ the defining polynomial of $\mathcal{F}(\aao)$, the family will be decomposed as
\[ \mathcal{F}(\aao)=\bigcup_{i\in I} \mathcal{F}_i \]
where $\mathcal{F}_i$ is an irreducible $\K_{\aao}$--family. We refer to $\mathcal{F}_i$ as the \textit{irreducible subfamilies of $\mathcal{F}(\aao)$.}
Note that, if $\mathcal{F}(\aao):=\{(f_1:f_2:f_3)\}_{m}$, each irreducible subfamily is of the form 
$\{ (f_1:f_2:f_3)\}_{m^*}$ where $m^*(t)$ is an irreducible factor of $m(t)$ over  $\K_{\aao}$.
\end{remark}

In the sequel we analyze the multiplicity of families of singularities under specializations.

\begin{definition}\label{def-multFam} 
Let $\mathcal{F}:=\{(f_1:f_2:f_3)\}_{m}\in\Stand(G^h)$ (see~\eqref{eq-SingularLocus}) be an irreducible $\FF$--family of $r$-fold points of $\Cu(G^h)$ (see Def.~\ref{def-family-sing}), and let $H^*(\aa,x,y,z)$ be one of the order $r$ derivatives of $G^h$ such that $H^*(\aa,f_1,f_2,f_3)\neq 0$ modulo $m(\aa,t)$ (see Remark \ref{re-mult-fam}). Let $H(\aa,t)$ be the remainder of the division of $H^*(\aa,f_1,f_2,f_3)$ by $m(\aa,t)$ w.r.t. $t$. Let $R(\aa):=\res_t(H(\aa,t),m(\aa,t))$. We define the open subset
\[ \Omega_{\mathrm{mult}(\mathcal{F})}:=\Omega_{\mathrm{def}( \mathcal{F})}\cap  \Omega_{\mathrm{nonZ}(H)}  \cap \,\Omega_{\mathrm{nonZ}(R)}.
\]
\end{definition}

\begin{remark}
We observe that in Def.~\ref{def-multFam}, $m$ is irreducible over $\FF$, $H\in \FF[t]\setminus\{0\}$ and $\deg_t(H)<\deg_t(m)$. Therefore, $\gcd(m,H)=1$ and hence $R\neq 0$.
\end{remark}

\begin{lemma}\label{lem:r-fold}
Let $\mathcal{F}\in \Stand(G^h)$ (see~\eqref{eq-SingularLocus}) be an irreducible $\FF$-family of $r$-fold points of $\Cu(G^h)$.
If $\aao\in \Omega_{\mathrm{mult}(\mathcal{F})}$, then every irreducible subfamily of  $\mathcal{F}(\aao)$ (see Remark~\ref{rem-Fa}) is a $\K_{\aao}$--family of $r$-fold points of $\Cu(G^h,\aao)$.
\end{lemma}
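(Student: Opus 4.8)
The plan is to show, for each irreducible subfamily $\mathcal{F}_i=\{(f_1^0:f_2^0:f_3^0)\}_{m^*}$ of $\mathcal{F}(\aao)$ (see Remark~\ref{rem-Fa}, where $m^*$ is an irreducible factor of $m^0$ over $\K_{\aao}$), that every point of $\mathcal{F}_i$ is an $r$-fold point of $\Cu(G^h,\aao)$. By Lemma~\ref{lem:defF}, $\mathcal{F}(\aao)$ is already a $\K_{\aao}$-family of points of $\Cu(G^h,\aao)$, so only the multiplicity needs to be controlled, which amounts to two assertions: (a) every partial derivative of $(G^h)^0$ of order at most $r-1$ vanishes at the points of $\mathcal{F}_i$, and (b) at least one partial derivative of order $r$ does not. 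First I would record the ambient good behaviour: since $\aao\in\Omega_{\mathrm{mult}(\mathcal{F})}\subseteq\Omega_{\mathrm{def}(\mathcal{F})}\subseteq\Omega_G$, Lemma~\ref{lem:omegaG} gives $(G^h)^0=(G^0)^h$ and that all partial derivatives of $G^h$ specialize properly, while $\Omega_{\mathrm{def}(\mathcal{F})}$ guarantees that the $f_i^0$ and $m^0$ are well--defined, that the leading coefficient of $m$ does not vanish at $\aao$, and that $\deg_t(m^0)=\deg_t(m)$.

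For (a), fix a derivative $D$ of $G^h$ of order at most $r-1$. Because $\mathcal{F}$ is a family of $r$-fold points of $\Cu(G^h)$, we have $D(\aa,f_1,f_2,f_3)\equiv 0 \pmod{m}$, i.e. the remainder of $D(\aa,f_1,f_2,f_3)$ by $m$ is zero. Applying Lemma~\ref{lem:spec-mod} (the leading coefficient of $m$ being nonzero at $\aao$, and both $D^0$, via Lemma~\ref{lem:omegaG}, and the $f_i^0$ being well--defined) yields $D^0(f_1^0,f_2^0,f_3^0)\equiv 0 \pmod{m^0}$, hence also modulo every factor $m^*$. Thus $D^0$ vanishes at all points of $\mathcal{F}_i$, so each of them has multiplicity at least $r$.

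The heart of the argument, and the main obstacle, is (b): it is not enough that the chosen order-$r$ derivative $H^*$ stays nonzero after specialization; one must ensure it does not vanish on any of the possibly several irreducible pieces into which $m^0$ splits. This is exactly what the resultant condition built into $\Omega_{\mathrm{mult}(\mathcal{F})}$ controls. Let $H(\aa,t)$ and $R(\aa)=\res_t(H,m)$ be as in Def.~\ref{def-multFam}. By Lemma~\ref{lem:spec-mod} applied to $H^*(\aa,f_1,f_2,f_3)$, whose remainder by $m$ is $H$, we get $H^{*0}(f_1^0,f_2^0,f_3^0)\equiv H^0 \pmod{m^0}$. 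Now $\aao\in\Omega_{\mathrm{nonZ}(H)}\cap\Omega_{\mathrm{nonZ}(R)}$ gives, via Lemma~\ref{lem:defVan}(2), that $H^0\neq 0$ and $R^0\neq 0$. Since the leading coefficient of $m$ is preserved at $\aao$, the resultant specializes up to a nonzero factor, $R^0=c\cdot\res_t(H^0,m^0)$ with $c\neq 0$ (cf.~\cite[Lemma 4.3.1]{winkler}), so $\res_t(H^0,m^0)\neq 0$ and therefore $\gcd_{\K_{\aao}[t]}(H^0,m^0)=1$. Consequently $H^0$ is coprime with each irreducible factor $m^*$ of $m^0$, that is $H^0\not\equiv 0\pmod{m^*}$, whence $H^{*0}(f_1^0,f_2^0,f_3^0)\not\equiv 0 \pmod{m^*}$. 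As $m^*$ is irreducible over $\K_{\aao}$, and hence the minimal polynomial of each of its roots, this non-divisibility forces $H^{*0}$ to be nonzero at every point of $\mathcal{F}_i$. Thus each point of $\mathcal{F}_i$ has multiplicity at most $r$.

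Combining (a) and (b), every point of each irreducible subfamily $\mathcal{F}_i$ is exactly an $r$-fold point of $\Cu(G^h,\aao)$, which is the claim; the infinity case $\mathcal{F}=\{(L_1:L_2:0)\}_m$ is handled verbatim with $(L_1,L_2,0)$ in place of $(f_1,f_2,f_3)$. The one delicate passage is the transition from ``nonvanishing modulo $m^0$'' to ``nonvanishing modulo each irreducible factor $m^*$'', which is precisely why the coprimality condition $\Omega_{\mathrm{nonZ}(R)}$, and not merely $\Omega_{\mathrm{nonZ}(H)}$, is required in Def.~\ref{def-multFam}.
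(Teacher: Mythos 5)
Your proposal is correct and follows essentially the same route as the paper's proof: Lemma~\ref{lem:defF} for membership in the specialized curve, Lemma~\ref{lem:spec-mod} for reducing the specialized derivatives modulo $m^0$, and the resultant condition $\Omega_{\mathrm{nonZ}(R)}$ to pass from $\gcd(H^0,m^0)=1$ to non-vanishing modulo each irreducible factor $m^*$. The delicate point you flag --- that non-vanishing modulo $m^0$ alone would not suffice once $m^0$ splits --- is exactly the role the resultant plays in the paper's argument as well.
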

\begin{proof}
$\mathcal{F}$ can be expressed as $\mathcal{F}=\{(f_1:f_2:\lambda)\}_{m}$ where $f_1,f_2,m\in \FF[t]$, $\lambda\in \{0,1\}$,
$m$ irreducible over $\FF$, and such that, for the case $\lambda=0$, $\deg_{t}(f_i)\leq 1$ and $\gcd(f_1,f_2)=1$ (see~\eqref{eq-SingularLocus}). Let $H^*, H$ be as in Def.~\ref{def-multFam}. Since $\aao\in \Omega_{\mathrm{def}(\mathcal{F})}$, by Lemma \ref{lem:defF}, $\mathcal{F}(\aao)$ is a $\K_{\aao}$--family of points of $\Cu(G^h,\aao)$. Let  $\mathcal{F}_i:=\{(f_1(\aao,t):f_2(\aao,t):\lambda)\}_{m^*}$  be an irreducible  subfamily of $\mathcal{F}(\aao)$. Now, let $W(\aa,x,y,z)$ be any partial derivative of $G^h$ of order smaller than $r$. Then, $W(\aa,f_1(\aa,t),f_2(\aa,t),\lambda)=0$ mod $m(\aa,t)$. Thus, since all the involved specializations are well--defined, $W(\aao,f_1(\aao,t),f_2(\aao,t),\lambda)=0$ mod $m^*$. Therefore, the points at $\fam_i$ have multiplicity at least $r$. On the other hand, since $\aao\in \Omega_{G}$, $H^*(\aao,f_1(\aao,t),f_2(\aao,t),\lambda)$ is well--defined and $\deg(m(\aa,t))=\deg(m(\aao,t))$, by Lemma~\ref{lem:spec-mod}, $H^*(\aao,f_1(\aao,t),f_2(\aao,t),\lambda)=H(\aao,t)$ modulo $m(\aao,t)$. Furthermore, since $\aao\in \Omega_{\mathrm{nonZ}(H)}$, then $H(\aao,t)\neq 0$. Moreover, since the leading coefficient of $m$ w.r.t. $t$ does not vanish at $\aao$, we have that
$\res_{t}(H(\aao,t),m(\aao,t))=\mu \,R(\aao)$ for some non-zero constant $\mu\in \overline{\K}$ (see Lemma 4.3.1 in~\cite{winkler}). So, since $\aao\in \Omega_{\mathrm{nonZ}(R)}$, $\res_{t}(H(\aao,t),m(\aao,t))\neq 0$. Therefore, $\gcd(H(\aao,t),m(\aao,t))=1$ and hence $H(\aao,t)\neq 0$ mod $m^*$.
Summarizing, $\mathrm{mult}(\mathcal{F}_i)=r$.
\end{proof}
 
In the last part of this subsection, we deal with the tangents to $\Cu(G^h)$ at an irreducible $\FF$-family. For this purpose, since the family $\mathcal{F}$ is irreducible, we will work with the curve $\CuFam(G^{h})$ associated to $\fam$ (see Def.~\ref{def-curve-fam} in Subsection~\ref{subsec-families} of the appendix).

\begin{definition}\label{def:tangent} Let $\mathcal{F}=\{(f_1:f_2:f_3)\}_m$ be an irreducible $\FF$-family of $r$-fold points of $\Cu(G^h)$.
Let $\FF_m$ be the quotient field of $\FF[t]/\!\!<\!\!m(t)\!\!>$.
The \textit{defining tangent polynomial} of $\Cu(G^h)$ at $\mathcal{F}$ is the homogenous polynomial $T\in\FF_m[x,y,z]$ of degree $r$ that defines the tangents, with its corresponding multiplicities, to $\CuFam(G^h)$ at the point $(f_1:f_2:f_3)$. Similarly, we introduce the \textit{defining tangent polynomial} to a specialized curve.
\end{definition}

\begin{remark}\label{rem-character} \
\begin{enumerate}
\item Let $\mathcal{F}=\{(f_1:f_2:1)\}_{m} \in \Stand(G^h)$ (see~\eqref{eq-SingularLocus}) be an irreducible family of affine $r$--fold points; similarly if the family is at infinity. The defining tangent polynomial $T$ is the reduction of
\begin{equation}\label{eq-Tstar}
T^*(\aa,t,x,y,z)=\sum_{i=0}^{r} \binom{r}{i} \dfrac{\partial^r G}{\partial^i x\,\partial^{r-i} y}(f_1,f_2) (x-f_1z)^i (y-f_2 z)^{r-i}
\end{equation}
modulo $m(t)$.
\item Let $\mathcal{F}$ be as in Def. \ref{def:tangent}. Then $\mathcal{F}$ is a family of ordinary points if and only if $T$ is squarefree over $\FF_m$. In the sequel, we assume w.l.o.g. that there is no tangent to $\CuFam(G^{h})$ at $\mathcal{F}$ independent of $x$ and that for two different tangents $T_1(\aa,x,y,z), T_2(\aa,x,y,z)$ it holds that $T_1(\aa,x,1,1)\neq T_2(\aa,x,1,1)$. Note that, if this is not the case, one can apply a linear change over $\K$ (and thus invariant under specializations of the parameters $\aa$). Then, the ordinary character of the family is readable from the squarefreeness of $T(\aa,t,x,1,1)$ over $\FF_m$.
\end{enumerate}
\end{remark}

\begin{definition}\label{def:ordSet}
Let $\mathcal{F}$, with defining polynomial $m(t)$, be an irreducible $\FF$--family of ordinary $r$-fold points of $\Cu(G^h)$. Let $T$ be the defining tangent polynomial of $\mathcal{F}$, where we assume  w.l.o.g. that the hypotheses in Remark~\ref{rem-character}~(2) are satisfied. Let $D(\aa,t)$ be the reduction modulo $m(t)$ of the discriminant w.r.t. $x$ of $T(\aa,t,x,1,1)$. Let $N(\aa)=\res_t(D,m)$. Let $A(\aa,t)$ be the leading coefficient of $T(\aa,t,x,1,1)$ w.r.t. $x$ and let $R(\aa)=\res_{t}(A,m)$. Let $S(\aa,x)=\res_t(T(\aa,t,x,0,0),m)$.
We define the set
\[ \Omega_{\mathrm{ord}(\mathcal{F})}=\Omega_{\mathrm{mult}(\mathcal{F})}\cap \Omega_{\mathrm{nonZ}(R)}\cap \Omega_{\mathrm{nonZ}(N)} \cap \Omega_{\mathrm{nonZ}(S)} . \]
\end{definition}

\begin{remark} In relation to Def.~\ref{def:ordSet}, we observe the following.
\begin{enumerate}
\item By construction, $\deg_t(A)<\deg_t(m)$ and clearly $A$ is not zero. Since $m$ is irreducible, $\gcd(A, m)=1$. Hence, $R\neq 0$.
\item Since $\mathcal{F}$ is ordinary and the two hypotheses in Remark \ref{rem-character}~(2) are satisfied, $D\neq 0$. 
Because $\deg_t(D)<\deg_t(m)$ and $m$ is irreducible, $\gcd(m,H)=1$ and hence, $N\neq 0$.
\item $T(\aa,t,x,y,z)$ has a factor in $\FF_{m}[y,z]$ if and only if $T(\aa,t,x,0,0)=0$.
This follows from the fact that the tangents are of degree one and thus, $T(\aa,t,x,y,z)=\prod (A_i(\aa,t)x+B_i(\aa,t)y+C_i(\aa,t)z)$ for some $A_i,B_i,C_i \in \FF_{m}$.
Thus, under our assumption that $T(\aa,t,x,y,z)$ does not have a factor independent of $x$, $T(\aa,t,x,0,0) \ne 0$.
Since $m$ is irreducible, and $\deg_t(T(\aa,t,x,0,0))<\deg_t(m)$, it follows that $S \ne 0$.
\end{enumerate}
\end{remark}

\begin{lemma}\label{lem:ord-fold}
Let $\mathcal{F}\in \Stand(G^h)$ (see \eqref{eq-SingularLocus}) be an irreducible $\FF$-family of ordinary $r$-fold points of $\Cu(G^h)$.
If $\aao\in \Omega_{\mathrm{ord}(\mathcal{F})}$, every irreducible subfamily of  $\mathcal{F}(\aao)$ (see Remark \ref{rem-Fa}) is a $\K_{\aao}$--family of ordinary $r$-fold points of $\Cu(G^h,\aao)$.
\end{lemma}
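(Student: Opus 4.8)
The strategy is to leverage the earlier lemma on multiplicity (Lemma~\ref{lem:r-fold}) to reduce the problem to showing that the ordinary \emph{character} is preserved, and then to read off ordinarity from the squarefreeness of a single-variable polynomial, exactly as set up in Remark~\ref{rem-character}~(2). So the plan is: first, since $\aao\in \Omega_{\mathrm{ord}(\mathcal{F})}\subset \Omega_{\mathrm{mult}(\mathcal{F})}$, apply Lemma~\ref{lem:r-fold} to conclude immediately that every irreducible subfamily $\mathcal{F}_i$ of $\mathcal{F}(\aao)$ is a $\K_{\aao}$-family of $r$-fold points of $\Cu(G^h,\aao)$. This settles the multiplicity; what remains is purely the ordinary character.

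Next I would fix an irreducible subfamily $\mathcal{F}_i=\{(f_1^0:f_2^0:f_3^0)\}_{m^*}$, where $m^*$ is an irreducible factor of $m(\aao,t)$ over $\K_{\aao}$ (see Remark~\ref{rem-Fa}). The key claim is that the defining tangent polynomial of $\Cu(G^h,\aao)$ at $\mathcal{F}_i$ is obtained by specializing $T$. By Remark~\ref{rem-character}~(1), $T$ is the reduction modulo $m(t)$ of the explicit expression $T^*$ in~\eqref{eq-Tstar}, which is built from order-$r$ derivatives of $G$ evaluated at $(f_1,f_2)$. Since $\aao\in \Omega_G$, Lemma~\ref{lem:omegaG}~(3) guarantees that all these derivatives specialize properly, so $T^*$ specializes correctly; and since the leading coefficient of $m$ w.r.t.\ $t$ does not vanish at $\aao$ (as $\aao\in\Omega_{\mathrm{def}(\mathcal{F})}$), Lemma~\ref{lem:spec-mod} ensures the reduction modulo $m$ commutes with specialization. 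Hence the defining tangent polynomial at $\mathcal{F}_i$ is the reduction of $T(\aao,t,x,y,z)$ modulo $m^*$.

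By Remark~\ref{rem-character}~(2), $\mathcal{F}_i$ is ordinary precisely when $T(\aao,t,x,1,1)$ is squarefree over $(\K_{\aao})_{m^*}$, the field $\K_{\aao}[t]/\langle m^*\rangle$ (note the normalizing hypotheses of that remark are invariant, since the linear change is over $\K$). I would detect this squarefreeness via the discriminant $D$ of $T(\aa,t,x,1,1)$ w.r.t.\ $x$, reduced modulo $m$, together with the leading coefficient $A$ and the quantity $S$ from Def.~\ref{def:ordSet}. The conditions $\aao\in\Omega_{\mathrm{nonZ}(R)}$ and $\aao\in\Omega_{\mathrm{nonZ}(N)}$ ensure, via $R=\res_t(A,m)$ and $N=\res_t(D,m)$ and Lemma~\ref{lem:spec-mod}, that $A$ does not vanish and $D$ does not vanish modulo any $m^*$; thus $T(\aao,t,x,1,1)$ retains its degree in $x$ and has nonvanishing discriminant over the residue field, hence is squarefree. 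The role of $\Omega_{\mathrm{nonZ}(S)}$, with $S=\res_t(T(\aa,t,x,0,0),m)$, is to exclude the degenerate situation in which a tangent line becomes independent of $x$ after specialization, i.e.\ to keep $T(\aao,t,x,0,0)\not\equiv 0$ modulo $m^*$, so that the squarefreeness-in-$x$ criterion of Remark~\ref{rem-character}~(2) continues to read the genuine ordinary character rather than an artifact.

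The main obstacle I anticipate is the bookkeeping of the reductions modulo $m$ versus modulo the irreducible factor $m^*$: one must verify carefully that nonvanishing of a resultant $\res_t(\cdot,m)$ over $\FF$, transported by Lemma~\ref{lem:spec-mod} and the nonvanishing of the relevant leading coefficients, really translates into coprimality with \emph{each} factor $m^*$ of $m(\aao,t)$, and that the discriminant of the specialized tangent polynomial computed over the residue field $(\K_{\aao})_{m^*}$ agrees with the specialization of $D$ modulo $m^*$. This is the same resultant-and-leading-coefficient mechanism used in the proof of Lemma~\ref{lem:r-fold}, so I expect the argument to parallel that proof closely; the subtlety is simply ensuring the tangent polynomial is genuinely governed by squarefreeness in the single variable $x$, which is exactly what the extra factor $S$ in $\Omega_{\mathrm{ord}(\mathcal{F})}$ is designed to guarantee.
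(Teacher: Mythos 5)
Your proposal is correct and follows essentially the same route as the paper's proof: reduce to the multiplicity statement via Lemma~\ref{lem:r-fold}, show the defining tangent polynomial specializes properly via Lemma~\ref{lem:omegaG} and Lemma~\ref{lem:spec-mod}, and then use $S$ to exclude factors independent of $x$, $R$ to preserve the leading coefficient, and $N$ (through the discriminant $D$) to transport squarefreeness. The only cosmetic difference is that you phrase the final step factor-by-factor over the residue fields $\K_{\aao}[t]/\langle m^*\rangle$, while the paper argues point-by-point at the roots $t_0$ of $m(\aao,t)$; the resultant-and-leading-coefficient mechanism is identical.
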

\begin{proof}
Since $\aao\in \Omega_{\mathrm{ord}(\mathcal{F})}\subset \Omega_{\mathrm{mult}(\mathcal{F})}\subset \Omega_{\mathrm{def}(\mathcal{F})}$,  $\deg_t(m(\aa,t))=\deg_{t}(m(\aao,t))$ and, by Lemma \ref{lem:r-fold}, every irreducible subfamily of $\mathcal{F}(\aao)$ is a $\K_{\aao}$--family of $r$-fold points of $\Cu(G^h,\aao)$. Let us prove that all points in $\mathcal{F}(\aao)$ are ordinary.

Let $T^*,T$  be as in Remark \ref{rem-character}  (1), 
 or similarly if the family is at infinity. Since $\aao\in \Omega_{\mathrm{ord}(\mathcal{F})}\subset \Omega_{\mathrm{mult}(\mathcal{F})}\subset \Omega_{\mathrm{def}(\mathcal{F})}\subset \Omega_G$, by Lemma \ref{lem:omegaG}, $T^*$ specializes properly at $\aao$, and since $\deg_t(m(\aa,t))=\deg_t(m(\aao,t))$, by Lemma \ref{lem:spec-mod}, $T$ also specializes properly at $\aao$.

Now, let $P$ be a point in $\mathcal{F}(\aao)$. Then, there exists a root $t_0$ of $m(\aao,t)$ such that $P$ is obtained by specializing $\mathcal{F}$ at $\aao$ and $t_0$. Since $P$ belongs to one of the irreducible subfamilies, $P$ is an $r$--fold point of the curve $\Cu(G^h,\aao)$. Because of the discussion above, $E(x,y,z):=T(\aao,t_0,x,y,z)$ is the defining tangent polynomial of $\Cu(G^h,\aao)$ at $P$. It remains to prove that $E$ is squareefree. First, let us see that there is no factor of $E$ independent of $x$. Assume that $e(y,z)$ is a factor of $E$. Then $E(x,0,0)=0$, and $(\aao,t_0,x,0,0)$ is a common zero of $T(\aa,t,x,y,z)$ and $m(\aa,t)$. Therefore, see e.g. Theorem 4.3.3 in~\cite{winkler}, $S(\aao,x)=0$ which contradicts that $\aao\in \Omega_{\mathrm{nonZ}(S)}$.
Thus, it is sufficient to prove the squarefreeness of $E(x,1,1)$. Let us assume that it is not squarefree, then its discriminant is zero. That is, the discriminant of $T(\aao,t_0,x,1,1)$ is zero. On the other hand, $\aao\in \Omega_{\mathrm{nonZ}(R)}$, $R(\aao)\neq 0$ and thus, $A(\aao,t_0)\neq 0$.
By~\cite[Lemma 4.1.3]{winkler} and the fact that $m(\aao,t_0)=0$, it follows that $D(\aao,t_0)=0$ and consequently, $N(\aao)=0$, in contradiction to $\aao \in \Omega_{\mathrm{nonZ}(N)}$.
\end{proof}

As a consequence of Lemmas~\ref{lem:defF}, \ref{lem:r-fold}, \ref{lem:ord-fold}, and taking into account that $\Omega_{\mathrm{ord}(\mathcal{F})}\subset\Omega_{\mathrm{mult}(\mathcal{F})}\subset \Omega_{\mathrm{def}(\mathcal{F})}$, we get the following corollary.

\begin{corollary}\label{cor:mul+ord}
Let $\mathcal{F}\in \Stand(G^h)$ (see~\eqref{eq-SingularLocus}) be an irreducible $\FF$-family of ordinary $r$-fold points of $\Cu(G^h)$.
If $\aao\in \Omega_{\mathrm{ord}(\mathcal{F})}$, then all points in $\mathcal{F}(\aao)$ are ordinary $r$-fold points of $\Cu(G^h,\aao)$ and $\#(\mathcal{F})=\#(\mathcal{F}(\aao))$.
\end{corollary}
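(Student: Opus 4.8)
The plan is to assemble the corollary directly from the three preceding lemmas, using only the chain of inclusions $\Omega_{\mathrm{ord}(\mathcal{F})}\subset\Omega_{\mathrm{mult}(\mathcal{F})}\subset \Omega_{\mathrm{def}(\mathcal{F})}$ to transport the hypotheses. First I would fix $\aao\in \Omega_{\mathrm{ord}(\mathcal{F})}$ and record that, by this chain, $\aao$ also lies in $\Omega_{\mathrm{mult}(\mathcal{F})}$ and in $\Omega_{\mathrm{def}(\mathcal{F})}$, so that all three lemmas are applicable at $\aao$. From $\aao\in \Omega_{\mathrm{def}(\mathcal{F})}$ and Lemma~\ref{lem:defF} I obtain immediately that $\mathcal{F}(\aao)$ is a $\K_{\aao}$-family of points of $\Cu(G^h,\aao)$ and that the cardinality is preserved, $\#(\mathcal{F})=\#(\mathcal{F}(\aao))$; this already settles the counting assertion of the statement, and it remains only to control the multiplicity and character of the points.

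For the geometric part I would pass to the decomposition of $\mathcal{F}(\aao)$ into its irreducible subfamilies $\mathcal{F}_i$, $i\in I$, as described in Remark~\ref{rem-Fa}, noting that every point of $\mathcal{F}(\aao)$ belongs to some such $\mathcal{F}_i$. Since $\aao\in \Omega_{\mathrm{ord}(\mathcal{F})}$, Lemma~\ref{lem:ord-fold} asserts that each irreducible subfamily $\mathcal{F}_i$ is a $\K_{\aao}$-family of ordinary $r$-fold points of $\Cu(G^h,\aao)$. (If one prefers to separate the two properties, Lemma~\ref{lem:r-fold}, applicable because $\aao\in \Omega_{\mathrm{mult}(\mathcal{F})}$, already yields the $r$-fold part, and Lemma~\ref{lem:ord-fold} upgrades this to the ordinary character.) Consequently every point of $\mathcal{F}(\aao)$, lying in some $\mathcal{F}_i$, is an ordinary $r$-fold point of $\Cu(G^h,\aao)$, which combined with the preserved cardinality from the previous paragraph completes the argument.

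I do not expect any genuine obstacle, since the corollary is a formal consequence of the lemmas just proved. The only point requiring a little care is the bookkeeping noted in Remark~\ref{rem-Fa}: although $\mathcal{F}$ is irreducible over $\FF$, its specialization $\mathcal{F}(\aao)$ may well be reducible over $\K_{\aao}$. Hence the correct phrasing is to argue through the irreducible subfamilies $\mathcal{F}_i$ rather than to treat $\mathcal{F}(\aao)$ as if it were a single irreducible family, and to quote Lemma~\ref{lem:ord-fold} at the level of each $\mathcal{F}_i$ before concluding the pointwise statement for all of $\mathcal{F}(\aao)$.
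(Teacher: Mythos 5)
Your proposal is correct and matches the paper exactly: the paper states this corollary as an immediate consequence of Lemmas~\ref{lem:defF}, \ref{lem:r-fold} and~\ref{lem:ord-fold} together with the inclusions $\Omega_{\mathrm{ord}(\mathcal{F})}\subset\Omega_{\mathrm{mult}(\mathcal{F})}\subset \Omega_{\mathrm{def}(\mathcal{F})}$, which is precisely your argument. Your extra care in passing through the irreducible subfamilies of Remark~\ref{rem-Fa} is exactly the right bookkeeping, since Lemma~\ref{lem:ord-fold} is phrased at that level.
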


\section{Preservation of the Genus}\label{sec-genus}
We consider a polynomial
\begin{equation}\label{eq-F} F(\mathbf{a},\gamma,x,y)\in \K[\aa,\gamma][x,y] \setminus \K[\aa,\gamma].
\end{equation}
$F$, as a non--constant polymomial in $\FF[x,y]$, defines an affine plane curve over $\overline{\FF}$ that we assume irreducible.

As introduced in Subsection~\ref{subsec-spec-families}, for each $\aao\in \S$ such that $F(\aao,\gamma^0,x,y)\not\in \overline{\K}$, we denote by $\Cu(F,\aao)$ the curve $\Cu(F(\aao,\gamma^0,x,y))$; similarly for $\Cu(F^h,\aao)$. Also, we denote by $\K_{\aao}$ the ground field of $\Cu(F,\aao)$ (see Def.~\ref{def-Ka}). Our goal is to analyze the relation between the genus of $\Cu(F)$ and the genus of $\Cu(F,\aao)$, under the assumption that $\Cu(F,\aao)$ is irreducible.

\subsection{Ordinary singular locus case}\label{subsec-ord-locus}
We start our analysis assuming that $\Cu(F^h)$ has only ordinary singularities.
Let $\Stand(F^h)$ be the $\FF$--standard decomposition of the singular locus of $\Cu(F^h)$ obtained by using the process described in Subsection~\ref{subsec-families}. Let $\Stand(F^h)$ decompose as in~\eqref{eq-SingularLocus} 
\begin{equation}\label{eq-SingularLocusF}
\Stand(F^h)=\bigcup_{m\in \mathcal{A}_a} \{(f_{1,m}:f_{2,m}:1)\}_{m} \, \cup \, \bigcup_{m\in \mathcal{A}_{\infty}} \{(L_{1,m}:L_{2,m}:0)\}_{m}
\end{equation}
where $f_{i,m}\in \FF[t]$, $L_{i,m}\in \K[t]$ with $\gcd(L_{1,m},L_{2,m})=1$ and $\deg(L_{i,m})\leq 1$, and $\mathcal{A}_a$, $\mathcal{A}_{\infty}$ are finite sets of irreducible polynomials in $\FF[t]$. We start with the following definition, where $\sing(\Cu(F^h))$ denotes the singular locus of $\Cu(F^h)$.

\begin{definition}\label{def-singOrd} We define the open subset (see~\eqref{eq-SingularLocusF} and Def.~\ref{def:omegaG} and~\ref{def:ordSet})
$$\Omega_{\mathrm{singOrd}(F^h)}:=\left\{ \begin{array}{cc} \displaystyle{\bigcap_{ \mathcal{F}\in \Stand(F^h)} \Omega_{\mathrm{ord}(\mathcal{F})}} & \text{if $\sing(\Cu(F^h))\neq \emptyset$}\\
\Omega_{F} & \text{if $\sing(\Cu(F^h))=\emptyset$} \end{array} \right.$$
\end{definition}

Then, the following result holds.

\begin{theorem}\label{theorem:genus-weak}
Let $\aao\in \Omega_{\mathrm{singOrd}(F^h)}$. If $\Cu(F,\aao)$ is irreducible, then
\[ \genus(\Cu(F))\geq \genus(\Cu(F,\aao)). \]
\end{theorem}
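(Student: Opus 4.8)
The plan is to use the genus formula for plane curves with only ordinary singularities. Recall that for an irreducible projective plane curve of degree $d$ having only ordinary singularities, the genus is given by the formula
\[
\genus = \binom{d-1}{2} - \sum_{p} \binom{r_p}{2},
\]
where the sum ranges over all singular points $p$ and $r_p$ denotes the multiplicity of $p$. The strategy is to compare this formula for the generic curve $\Cu(F^h)$ with the corresponding quantity for the specialized curve $\Cu(F^h,\aao)$ and show that the genus cannot increase under specialization.

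**First I would** establish that the degree is preserved: since $\aao \in \Omega_{\mathrm{singOrd}(F^h)} \subseteq \Omega_F$, Lemma~\ref{lem:omegaG} gives $\deg(\Cu(F,\aao)) = \deg(\Cu(F)) = d$ and $(F^h)^0 = (F^0)^h$. Hence the term $\binom{d-1}{2}$ is identical for both curves. Next, using the hypothesis that $\Cu(F,\aao)$ is irreducible, the specialized curve is a genuine plane curve to which the same inequality-version of the genus formula applies: for any irreducible plane curve, the arithmetic genus $\binom{d-1}{2}$ bounds $\genus$ from above by the total delta-invariant, and for ordinary singularities this is $\sum \binom{r_p}{2}$.

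**The key step** is to account for all singularities of $\Cu(F^h,\aao)$. By Corollary~\ref{cor:mul+ord}, for each irreducible $\FF$-family $\mathcal{F} \in \Stand(F^h)$ of ordinary $r$-fold points, the specialization $\mathcal{F}(\aao)$ consists entirely of ordinary $r$-fold points of $\Cu(F^h,\aao)$ and $\#(\mathcal{F}) = \#(\mathcal{F}(\aao))$. Thus the singularities coming from the generic standard decomposition contribute exactly the same amount $\sum_{\mathcal{F}} \#(\mathcal{F}) \binom{r_{\mathcal{F}}}{2}$ to the delta-invariant of both curves. The point is that $\Cu(F^h,\aao)$ \emph{may acquire additional singularities} not accounted for by the families in $\Stand(F^h)$ (for instance, points that are regular generically but become singular after specialization). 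Each such extra singular point contributes a nonnegative term $\binom{r_p}{2} \geq 1$ to the delta-invariant of the specialized curve. Therefore
\[
\sum_{p \in \sing(\Cu(F^h,\aao))} \binom{r_p}{2} \;\geq\; \sum_{\mathcal{F}\in\Stand(F^h)} \#(\mathcal{F}) \binom{r_{\mathcal{F}}}{2} \;=\; \sum_{q \in \sing(\Cu(F^h))} \binom{r_q}{2},
\]
which, combined with the equality of degrees, yields
\[
\genus(\Cu(F,\aao)) \;\leq\; \binom{d-1}{2} - \sum_{q} \binom{r_q}{2} \;=\; \genus(\Cu(F)).
\]

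**The hard part will be** justifying the genus \emph{inequality} for the specialized curve without knowing that its singularities are all ordinary. While Corollary~\ref{cor:mul+ord} guarantees that the tracked families remain ordinary, I cannot assume the newly appearing singularities are ordinary, so the clean equality formula is unavailable for $\Cu(F^h,\aao)$; I must instead invoke the general upper bound $\genus \leq \binom{d-1}{2} - \sum_p \delta_p$ valid for any irreducible plane curve (with $\delta_p \geq \binom{r_p}{2}$ and $\delta_p \geq \binom{r_{\mathcal{F}}}{2}$ matching exactly for the ordinary families via Corollary~\ref{cor:mul+ord}). A secondary subtlety is confirming that distinct families in $\Stand(F^h)$ specialize to genuinely distinct singular points, so that their contributions are counted without overlap; this follows from the standard decomposition being a disjoint decomposition of the singular locus together with the count-preservation $\#(\mathcal{F}) = \#(\mathcal{F}(\aao))$ from Lemma~\ref{lem:defF}. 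Assembling these pieces gives the desired inequality.
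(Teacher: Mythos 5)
Your proposal is correct and follows essentially the same route as the paper: preserve the degree via $\Omega_F$ and Lemma~\ref{lem:omegaG}, use Corollary~\ref{cor:mul+ord} to transfer each ordinary $r$-fold family with its cardinality, multiplicity and character, and observe that any additional singularities of the specialized curve can only lower the genus via the formula~\eqref{eq-genus}/\eqref{eq-genus2}. You are in fact slightly more explicit than the paper about why the possibly non-ordinary new singularities still only decrease the genus (invoking the general delta-invariant upper bound rather than the ordinary-singularity formula), which is a point the paper handles only implicitly by citing~\eqref{eq-genus2}.
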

\begin{proof} Let $d$ be the degree of $\Cu(F)$. If $\sing(\Cu(F^h))=\emptyset$, then $\aao\in \Omega_F$. So, by Lemma~\ref{lem:omegaG}, $\deg(F(\aao,\gamma^0,x,y))=d$. Since $\Cu(F,\aao)$ is irreducible, by equality~\eqref{eq-genus} in the appendix, one has that
\[ \genus(\Cu(F))=\dfrac{(d-1)(d-2)}{2} \geq  \genus(\Cu(F,\aao)). \]
If $\sing(\Cu(F^h))\neq \emptyset$, then $\aao\in \Omega_{\mathrm{singOrd}(F^h)}\subset \Omega_F$ and $\deg(F(\aao,\gamma^0,x,y))=d$. By Corollary~\ref{cor:mul+ord}, all elements in $\sing(\Cu(F^h))$ have the same multiplicity and character as their corresponding elements in $\sing(\Cu(F^h,\aao))$ after specialization. 
New singularities, however, may appear in $\sing(\Cu(F^h,\aao))$. So, reasoning as above with the genus formula in~\eqref{eq-genus}, or~\eqref{eq-genus2}, in the appendix, we get the result.
\end{proof}

The next result is a direct consequence of the previous theorem when the genus of $\Cu(F)$ is zero.

\begin{corollary}\label{cor:genus-zero}
Let $\Cu(F)$ be a rational curve. Let $\aao\in \Omega_{\mathrm{SingOrd}(F^h)}$. If $\Cu(F,\aao)$ is irreducible, then $\Cu(F,\aao)$ is rational.
\end{corollary}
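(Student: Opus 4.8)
The plan is to derive Corollary~\ref{cor:genus-zero} directly from Theorem~\ref{theorem:genus-weak} by specializing the genus inequality to the rational case. Since $\Cu(F)$ is rational, by the characterization of rational curves as the irreducible curves of genus zero (recalled in Subsection~\ref{sec-pre}, e.g.~\cite[Theorem 4.63]{myBook}), we have $\genus(\Cu(F))=0$. Observing that $\Omega_{\mathrm{SingOrd}(F^h)}=\Omega_{\mathrm{singOrd}(F^h)}$ (the capitalization is merely typographic), the hypothesis $\aao\in \Omega_{\mathrm{singOrd}(F^h)}$ together with the assumed irreducibility of $\Cu(F,\aao)$ places us exactly in the situation of Theorem~\ref{theorem:genus-weak}.

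First I would apply Theorem~\ref{theorem:genus-weak} to obtain the inequality
\[ 0 = \genus(\Cu(F)) \geq \genus(\Cu(F,\aao)). \]
Next I would invoke the fact that the geometric genus is a non-negative integer, so $\genus(\Cu(F,\aao))\geq 0$. Combining the two inequalities forces $\genus(\Cu(F,\aao))=0$.

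Finally, since $\Cu(F,\aao)$ is assumed irreducible and now has genus zero, the same characterization of rational curves used at the outset (an irreducible curve is rational if and only if its genus is zero) yields that $\Cu(F,\aao)$ is rational, which is the desired conclusion.

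This argument is entirely routine given Theorem~\ref{theorem:genus-weak}; there is essentially no obstacle. The only point requiring the slightest care is confirming that the irreducibility hypothesis on $\Cu(F,\aao)$ is what makes the genus characterization applicable to the specialized curve, and that the genus bound of Theorem~\ref{theorem:genus-weak} is exactly what collapses to zero in the rational case. In short, rationality of the generic curve gives genus zero, the theorem transports a $\leq$ bound to the specialization, non-negativity of genus pins the value, and irreducibility lets us read rationality back off from genus zero.
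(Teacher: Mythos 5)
Your proposal is correct and matches the paper exactly: the paper states this corollary as a ``direct consequence'' of Theorem~\ref{theorem:genus-weak} when $\genus(\Cu(F))=0$, which is precisely the argument you spell out (the inequality from the theorem, non-negativity of the genus, and the genus-zero characterization of rational irreducible curves). Nothing is missing.
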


The inequality in Theorem~\ref{theorem:genus-weak} comes from the fact that, using $\Omega_{\mathrm{SingOrd}(F^h)}$, we cannot ensure that $\sing(\Cu(F^h,\aao))$ does not include new singularities apart from those coming from the specialization of the singular locus of $\Cu(F^h)$. To control this phenomenon, we will ensure that certain Gr\"obner bases behave properly under specializations. By, exercises 7, 8, pages 315--316 in~\cite{cox}, or by Proposition 1, page 308 in~\cite{cox}, we know that there exists an open Zariski set such that the Gr\"obner basis specializes properly; in fact, a description of this open subset is also available. For a more general analysis of Gr\"obner bases with parametric coefficients we refer to~\cite{montes} and~\cite{weispfennin}. On the other hand, since we are working with bivariate polynomials in $\FF[x,y]$, the open subset above can be determined by using resultants.
This motivates the next definition.

\begin{definition}\label{def-OpenGB}
Let $\mathrm{I}$ be an ideal in $\FF[\overline{v}]$, where $\overline{v}$ is tuple of variables, generated by $\mathscr{G}\subset \FF[\overline{v}]$. Let $\mathcal{G}$ be a Gr\"obner basis of $\mathscr{G}$ w.r.t. some order. We define   $\Omega_{\mathrm{spGB}(\mathcal{G})}\subset \S$ as a non-empty open subset such that for every $\aao\in \Omega_{\mathrm{spGB}(\mathcal{G})}$ it holds that
$\{ g(\aao,\gamma^0,\overline{v}) \,|\, g\in \mathcal{G}\}$ is a Gr\"obner basis, w.r.t. the same order, of the ideal generated by $\{ g(\aao,\gamma^0,\overline{v}) \,|\, g\in \mathscr{G}\}$ in $\K_{\aao}[\overline{v}]$.
\end{definition}

Now, we focus our attention on the standard decomposition of the singular locus of $\Cu(F^h)$ described in~\eqref{eq-SingularLocusF}.
In the first step, if necessary, we apply a $\K$ linear change of coordinates to ensure that the curve is in regular position. Hence, this linear transformation it is not affected by the specializations of $\aa$. Therefore, for our reasonings, we may assume w.l.o.g. that $F$ is already in regular position. Next, let $\mathcal{G}_1$ be a Gr\"obner basis of $<\!\!F,F_{x},F_{y}\!\!>$ w.r.t. the lexicographic order with $x<y$, and let $\mathcal{G}_2$ be a Gr\"obner basis of the same ideal w.r.t. the lexicographic order with $y<x$. Let $\{f(\aa,\gamma,x)\}=\mathcal{G}_1\cap \FF[x]$, $\{g(\aa,\gamma,y)\}=\mathcal{G}_2\cap \FF[y]$, $\tilde{f}=f/\gcd(f,f_x)$ and $\tilde{g}=g/\gcd(g,g_y)$, where $f_x$ is the derivative of $f $ w.r.t. $x$; similarly with $g_y$. Finally, let $\mathcal{G}_3:=\{A(\aa,x),y-B(\aa,x)\}$, with $A$ square-free and $\deg(B)<\deg(A)$, be the normed reduced Gr\"obner basis w.r.t. the lexicographic order with $x<y$ of $<\!\!F, F_{x}, F_{y}, \tilde{f}, \tilde{g}\!\!>$.
Then, we introduce the following definitions (recall  Def.~\ref{def-open1}, \ref{def-gcd}, \ref{def:sqfree},  \ref{def:gcd-several}, \ref{def:sqfree} and~\ref{def-singOrd}).

\begin{definition}\label{def:genusOrdAffine} \textsf{(Affine singularities)} 
With the notation introduced above, let \[ \Omega_1=\Omega_{\mathrm{nonZ}(U)} \cap \Omega_{\mathrm{gcd}(f,f_x)}\cap \Omega_{\mathrm{sqfree}(\tilde{f})}, \,\, \,\,\Omega_2=\Omega_{\mathrm{nonZ}(V)} \cap \Omega_{\mathrm{gcd}(g,g_y)}\cap \Omega_{\mathrm{sqfree}(\tilde{g})}
\]
where $U$ and $V$ are the leading coefficients of $f$ and $g$ w.r.t. $x$ and $y$, respectively.
We define the open subset
\[ \Omega_{\mathrm{sing}_a(F)}:=\bigcap_{i=1}^{3} \Omega_{\mathrm{spGB}(\mathcal{G}_i)}  {\bigcap_{q\in \mathcal{G}_1\setminus \{f\}}
\Omega_{\mathrm{nonZ}(W_{q,y})}} {\bigcap_{q\in \mathcal{G}_2\setminus \{g\}}
\Omega_{\mathrm{nonZ}(W_{q,x})}} \cap \Omega_1\cap \Omega_2 \cap \Omega_{\mathrm{sqfree}(A)}\]
where $W_{q,y}$ denotes the leading coefficient of $q$ w.r.t. $y$; similarly with $W_{q,x}$.
\end{definition}

\begin{remark}
Note that all polynomials in $\mathcal{G}_1\setminus \{f\}$ do depend on $y$; similarly for $\mathcal{G}_2\setminus \{g\}$. The idea of controlling the coefficients $W_{q,x}$ and $W_{q,y}$ in Def.~\ref{def:genusOrdAffine} is to ensure that the elimination ideal of the specialized Gr\"obner basis does not include additional generators.
\end{remark}

\begin{definition}\label{def:genusOrdInf} \textsf{(Singularities at infinity)} 
Let $M(\aa,\gamma,y,z):=F^h(\aa,\gamma,1,y,z)$, let $U(\aa,\gamma,t)=\gcd(M(\aa,\gamma,t,0),M_y(\aa,\gamma,t,0),M_z(\aa,\gamma,t,0))$ and $\tilde{U}(\aa,\gamma,t):=U/\gcd(U,U')$, where $U'$ is the derivative of $U$ w.r.t. $t$.  Let 
$$\Omega_{(0:1:0)}:=\left\{
\begin{array}{cl}
\S  & \text{if $(0:1:0)\in \sing(\Cu(F^h))$} \\
\Omega_{\mathrm{nonZ}(J(\aa,\gamma,0,1,0))}
& \text{if $(0:1:0)\notin \sing(\Cu(F^h))$} \end{array} \right.$$
where $J$ is one the first derivatives of $F^h$ not vanishing at $(0:1:0)$.  We define the open subset  
\[ \Omega_{\mathrm{sing}_\infty(F)}:=\Omega_{\mathrm{gcd}(M(\aa,\gamma,t,0),M_y(\aa,\gamma,t,0),M_z(\aa,\gamma,t,0))} \cap \Omega_{\mathrm{gcd}(U,U')} \cap \Omega_{(0:1:0)}. \]
\end{definition}
\begin{definition}\label{def:genusOrd}
We define the open subset 
\[ \Omega_{\mathrm{genusOrd}(F^h)}:= \Omega_{\mathrm{singOrd}(F^h)} \cap \Omega_{\mathrm{sing}_a(F)}\cap \Omega_{\mathrm{sing}_\infty(F)}. \]
\end{definition}

In the following lemma, we see that the cardinality of the singular locus, as a set, is preserved under specializations.
\begin{lemma}\label{lem:card-sing}
Let $\aao\in \Omega_{\mathrm{sing}_a(F)}\cap \Omega_{\mathrm{sing}_\infty(F)}$. Then
\[ \#(\sing(\Cu(F^h))=\#(\sing(\Cu(F^h,\aao)). \]
\end{lemma}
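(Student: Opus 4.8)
The goal is to show that the number of singular points of the projective curve $\Cu(F^h)$ equals the number of singular points of the specialized curve $\Cu(F^h,\aao)$ as sets, for $\aao$ in the intersection of the two open sets $\Omega_{\mathrm{sing}_a(F)}$ and $\Omega_{\mathrm{sing}_\infty(F)}$. I would split the count into affine singularities and singularities at infinity, matching the way the two open sets in Definitions~\ref{def:genusOrdAffine} and~\ref{def:genusOrdInf} are built. The affine singular locus is $\V(F,F_x,F_y)$ intersected with the affine chart, and by construction the reduced Gröbner basis $\mathcal{G}_3=\{A(\aa,x),\,y-B(\aa,x)\}$ with $A$ squarefree describes this set as a graph over the $x$-coordinate: the number of affine singular points is exactly $\deg_x(A)$, the number of distinct roots of the squarefree polynomial $A$.

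**Affine part.** First I would argue that for $\aao\in\Omega_{\mathrm{sing}_a(F)}$ the Gröbner basis $\mathcal{G}_3$ specializes properly, i.e. $\{A(\aao,\gamma^0,x),\,y-B(\aao,\gamma^0,x)\}$ is a reduced Gröbner basis of the specialized ideal $<\!F^0,F^0_x,F^0_y\!>$ over $\K_{\aao}[x,y]$. This requires showing that the elimination ideal does not pick up spurious generators and that no roots are lost or merged. The inclusion of the factors $\Omega_{\mathrm{spGB}(\mathcal{G}_i)}$ handles the Gröbner-basis specialization; the factors $\Omega_{\mathrm{nonZ}(W_{q,y})}$ and $\Omega_{\mathrm{nonZ}(W_{q,x})}$ ensure the leading coefficients of the other basis elements do not vanish, so the elimination ideal generated by the single polynomial $f$ (respectively $g$) is preserved, as flagged in the remark after Definition~\ref{def:genusOrdAffine}. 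Then, since $A$ is squarefree and $\aao\in\Omega_{\mathrm{sqfree}(A)}$, Lemma~\ref{lem:sqfree} gives that $A(\aao,\gamma^0,x)$ remains squarefree of the same degree, so $\deg_x(A(\aao,\gamma^0,x))=\deg_x(A)$ distinct roots. Because of the graph form $y=B(x)$, each root of $A$ yields exactly one point in both the generic and specialized affine singular locus, so the two affine counts agree.

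**Part at infinity.** For $\aao\in\Omega_{\mathrm{sing}_\infty(F)}$ I would dehomogenize via $M(\aa,\gamma,y,z)=F^h(\aa,\gamma,1,y,z)$ and treat the points at infinity other than $(0:1:0)$ by the univariate gcd $U(\aa,\gamma,t)=\gcd(M(\cdot,t,0),M_y(\cdot,t,0),M_z(\cdot,t,0))$; its distinct roots count these points, and $\tilde U=U/\gcd(U,U')$ is the squarefree part. Applying Theorem~\ref{theorem-lemma-gcd-several-pol} on the factor $\Omega_{\mathrm{gcd}(M,M_y,M_z)}$ shows the gcd specializes with preserved $t$-degree, and the factor $\Omega_{\mathrm{gcd}(U,U')}$ together with Corollary~\ref{cor-lemma-gcd} guarantees the squarefree part $\tilde U$ specializes with the same number of distinct roots; hence the count of infinite singular points away from $(0:1:0)$ is preserved. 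The distinguished point $(0:1:0)$ is handled separately by the factor $\Omega_{(0:1:0)}$: if it is generically singular it stays in the count trivially, and if it is generically smooth the factor $\Omega_{\mathrm{nonZ}(J(\aa,\gamma,0,1,0))}$ keeps some order-one derivative $J$ nonzero there, so it stays smooth after specialization and contributes zero to both counts. Summing the affine count, the infinite-away-from-$(0:1:0)$ count, and the $(0:1:0)$ contribution gives equality of total cardinalities.

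**Main obstacle.** The delicate point is the affine part: I must be confident that proper specialization of the Gröbner bases $\mathcal{G}_1,\mathcal{G}_2,\mathcal{G}_3$, combined with the non-vanishing of the auxiliary leading coefficients $W_{q,x},W_{q,y}$, genuinely rules out both the loss of a root (two distinct generic singular points colliding) and the gain of a spurious one (the specialized elimination ideal being strictly smaller than the specialization of the generic elimination ideal). The squarefreeness of $A$ under specialization controls the collision of roots, while the proper specialization of the full Gröbner bases controls the shape $y=B(x)$ and prevents extra components; I expect the bulk of the work to be verifying that these two mechanisms together force the specialized zero set to be precisely the specialization of the generic zero set, with no change in cardinality.
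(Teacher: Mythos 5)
Your proposal follows essentially the same route as the paper's proof: split the count into affine singularities (counted by $\deg_x(A)$ via the shape-lemma form $\{A, y-B\}$, preserved because the Gröbner bases, the squarefree parts $\tilde f,\tilde g$, and the squarefreeness of $A$ all specialize properly) and singularities at infinity (counted by $\deg_t(\tilde U)$, preserved via Theorem~\ref{theorem-lemma-gcd-several-pol} and Corollary~\ref{cor-lemma-gcd}, with $(0:1:0)$ handled separately by $\Omega_{(0:1:0)}$). The only step you leave implicit that the paper makes explicit is the identification $\sqrt{\langle F^0,F^0_x,F^0_y\rangle}=\langle F^0,F^0_x,F^0_y,\tilde f^0,\tilde g^0\rangle$, which is what licenses reading the specialized affine singular count off the specialized $\mathcal{G}_3$; otherwise the arguments coincide.
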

\begin{proof}
Let us denote the specialization of a polynomial $Q$ at $\aao$ as $Q^0$. 
Let $\mathcal{G}^{0}_{1}$ be a Gr\"obner basis of $<\!\!F^0,F^{0}_{x},F^{0}_{y}\!\!>$ w.r.t. the lexicographic order $x<y$, and let $\mathcal{G}^{0}_2$ be a Gr\"obner basis of the same ideal w.r.t. the lexicographic order $y<x$. Since $$\aao\in \Omega_{\mathrm{spGB}(\mathcal{G}_1)} \cap \bigcap_{q\in \mathcal{G}_1\setminus \{f\}}\Omega_{\mathrm{nonZ}(W_{q,y})}\cap \Omega_1,$$ then $\{f^0\}=\mathcal{G}_{1}^{0}\cap \K_{\aao}[x]$ and $\deg_x(f )=\deg_x(f^0)$. 
Similarly, $\{g^0\}=\mathcal{G}_{2}^{0}\cap \K_{\aao}[y]$ and   $\deg_y(f)=\deg_y(f^0)$.
Since $\aao \in \Omega_1$, by Corollary~\ref{cor-lemma-gcd}, $\gcd(f,f_x)^0=\gcd(f^0,f_{x}^{0})$ and $\deg_x(\gcd(f,f_x)^0)=\deg_x(\gcd(f,f_x))$.
Thus, $$\tilde{f}^0=\frac{f^0}{\gcd(f^0,f_{x}^{0})}.$$
By Lemma~\ref{lem:sqfree}, it holds that $\deg_x(\tilde{f})=\deg_x(\tilde{f}^0)$ and $\tilde{f}^0$ is squarefree. 
Similarly for $g$ and $\tilde{g}$ since $\aao\in \Omega_2$. In addition, by Lemma \ref{lem:omegaG}, 
\[ \sqrt{<\!\! F^0,F^{0}_{x},F^{0}_{y} \!>}=<\!\! F^0,F_{x}^0,F_{y}^0,\tilde{f}^0,\tilde{g}^0\!\!>.\]
Since $\aao\in \Omega_{\mathrm{spGB}(\mathcal{G}_3)}$, $\{A^0,y-B^0\}$ is a Gr\"obner basis of $\sqrt{<\!\!F^0,F^{0}_{x},F^{0}_{y} \!>}$.

Since $\aao\in \Omega_{\mathrm{sqfree}(A)}$, by Lemma~\ref{lem:sqfree}, $A^0$ is squarefree. Therefore, the number of affine singularities of $\Cu(F,\aao)$ is $\deg_x(A^0)$ and $\deg_x(A)$ is the number of affine singularities of $\Cu(F)$. By Lemma~\ref{lem:sqfree}, we get that $\deg_x(A^0)=\deg_x(A)$ and, hence, $\Cu(F)$ and $\Cu(F,\aao)$ have the same number of affine singularities.

It remains to prove that the number of singularities at infinity is also the same. First we observe
that, if $(0:1:0)\in \sing(\Cu(F^h))$, then $(0:1:0)\in \sing(\Cu(F^h,\aao))$. Moreover, since $\aao\in \Omega_{(0:1:0)}$, if $(0:1:0)\not\in \sing(\Cu(F^h))$, then $(0:1:0)\not\in \sing(\Cu(F^h,\aao))$. 
For the remaining singularities at infinity, denote by $\Sigma$ the set of the singularities of the form $(1:\mu:0)\in \sing(\Cu(F^h))$; similarly let $\Sigma^0$ be the set of singularities of this type in $\sing(\Cu(F^h,\aao))$. Let $M^0(y,z):=M(\aao,\gamma^0,y,z)$ (see Def.~\ref{def:genusOrdInf}), $U^0(t):=\gcd(M^0(t,0),M^{0}_{y}(t,0),M^{0}_{z}(t,0))$ and $\tilde{U}^0(t):=U^0/\gcd(U^0,(U^0)')$. Then,
\begin{equation}\label{eq-uo}
\#(\Sigma)=\deg_t(\tilde{U}) \,\,\, \text{and}\,\,
\#(\Sigma^0)=\deg_t(\tilde{U}^0).
\end{equation}
Since $\aao\in \Omega_{\mathrm{gcd}(M(\aa,\gamma,t,0),M_y(\aa,\gamma,t,0),M_z(\aa,\gamma,t,0))}$, by Theorem~\ref{theorem-lemma-gcd-several-pol}, it holds that
\begin{equation}\label{eq-u1}
U^0(t)=U(\aao,\gamma^0,t) \,\,\, \text{and}\,\, \deg_t(U^0(t))=\deg_t(U(\aa,\gamma,t)).
\end{equation}
Let $D(\aa,\gamma,t)=\gcd(U,U')$ and $D^0(t)=\gcd(U^0,(U^0)')$. Since $\aao\in \Omega_{\mathrm{gcd}(U,U')}$, by Corollary~\ref{cor-lemma-gcd}, one has that
\begin{equation}\label{eq-u2}
D^0(t)=D(\aao,\gamma^0,t) \,\,\, \text{and}\,\, \deg_t(D^0(t))=\deg_t(D(\aa,\gamma,t)).
\end{equation}
Now, by~\eqref{eq-uo}, \eqref{eq-u1} and \eqref{eq-u2}, we get that $\#(\Sigma)=\#(\Sigma^0)$ and this concludes the proof.
\end{proof}

\begin{theorem}\label{theorem:genus-strong}
Let $\aao\in \Omega_{\mathrm{genusOrd}(F^h)}$. If $\Cu(F,\aao)$ is irreducible, then
\[ \genus(\Cu(F))= \genus(\Cu(F,\aao)). \]
\end{theorem}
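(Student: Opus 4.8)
The plan is to prove the two inequalities separately. Since $\Omega_{\mathrm{genusOrd}(F^h)}\subset \Omega_{\mathrm{singOrd}(F^h)}$ by Def.~\ref{def:genusOrd}, the bound $\genus(\Cu(F))\geq \genus(\Cu(F,\aao))$ comes for free from Theorem~\ref{theorem:genus-weak}, so the whole task is the reverse inequality. I would obtain it by comparing the two curves through the ordinary-singularity genus formula~\eqref{eq-genus}: I will argue that $\Cu(F)$ and $\Cu(F,\aao)$ share the same degree $d$ and the same total genus defect $\sum_p \binom{r_p}{2}$, so their genera coincide.

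First I would collect what the factor $\Omega_{\mathrm{singOrd}(F^h)}$ already provides. Since $\aao\in\Omega_F$, Lemma~\ref{lem:omegaG} preserves the degree $d$, so the term $\tfrac{(d-1)(d-2)}{2}$ is common to both genus formulas. Moreover, for every irreducible family $\mathcal{F}\in\Stand(F^h)$, Corollary~\ref{cor:mul+ord} guarantees that each point of $\mathcal{F}(\aao)$ is an ordinary $r_{\mathcal{F}}$-fold point of $\Cu(F^h,\aao)$ and that $\#(\mathcal{F})=\#(\mathcal{F}(\aao))$. In particular $\Cu(F^h,\aao)$ still has only ordinary singularities among the specialized families, and the contribution of these families to its genus defect, namely $\sum_{\mathcal{F}}\#(\mathcal{F}(\aao))\binom{r_{\mathcal{F}}}{2}$, equals the full defect $\sum_{\mathcal{F}}\#(\mathcal{F})\binom{r_{\mathcal{F}}}{2}$ of $\Cu(F^h)$.

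The crux is then to show that the specialized curve acquires no further singularities beyond the specialized families, i.e. that $\sing(\Cu(F^h,\aao))=\bigcup_{\mathcal{F}}\mathcal{F}(\aao)$ and that this union is disjoint. This is exactly where the extra factors $\Omega_{\mathrm{sing}_a(F)}$ and $\Omega_{\mathrm{sing}_\infty(F)}$ of $\Omega_{\mathrm{genusOrd}(F^h)}$ enter, via Lemma~\ref{lem:card-sing}, which yields $\#(\sing(\Cu(F^h)))=\#(\sing(\Cu(F^h,\aao)))$. Since the families partition $\sing(\Cu(F^h))$, the left-hand side equals $\sum_{\mathcal{F}}\#(\mathcal{F})=\sum_{\mathcal{F}}\#(\mathcal{F}(\aao))$. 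As $\bigcup_{\mathcal{F}}\mathcal{F}(\aao)\subseteq \sing(\Cu(F^h,\aao))$ and its cardinality is at most $\sum_{\mathcal{F}}\#(\mathcal{F}(\aao))$, matching the total count forces both that the specialized families are pairwise disjoint and that they exhaust $\sing(\Cu(F^h,\aao))$. I expect this disjointness/exhaustion step to be the main obstacle; to secure it I would lean on the squarefree univariate representation $\{A(\aa,x),\,y-B(\aa,x)\}$ of Def.~\ref{def:genusOrdAffine}, noting that in regular position $A$ is, up to a constant, the product of the affine family polynomials $m_{\mathcal{F}}$, so that the condition $\aao\in\Omega_{\mathrm{sqfree}(A)}$ keeps $A^0=\prod_{\mathcal{F}}m_{\mathcal{F}}^0$ squarefree and thereby forces the $m_{\mathcal{F}}^0$ to be pairwise coprime and each squarefree; the analogous role at infinity is played by $\tilde U$ inside $\Omega_{\mathrm{sing}_\infty(F)}$.

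Putting these together, $\Cu(F^h,\aao)$ has only ordinary singularities, all coming from the specialized families with unchanged multiplicities, so its genus defect is $\sum_{\mathcal{F}}\#(\mathcal{F})\binom{r_{\mathcal{F}}}{2}$, identical to that of $\Cu(F^h)$. Substituting into~\eqref{eq-genus} (or~\eqref{eq-genus2}) gives $\genus(\Cu(F,\aao))=\genus(\Cu(F))$, which in particular delivers the reverse inequality and completes the proof.
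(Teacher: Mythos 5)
Your proposal is correct and follows essentially the same route as the paper's proof: degree preservation via Lemma~\ref{lem:omegaG}, preservation of multiplicity, character and cardinality of each family via Corollary~\ref{cor:mul+ord}, equality of the total number of singularities via Lemma~\ref{lem:card-sing}, and then the genus formula~\eqref{eq-genus}. Your explicit counting argument showing that the specialized families must be pairwise disjoint and exhaust $\sing(\Cu(F^h,\aao))$ is a welcome elaboration of a step the paper leaves implicit, but it is not a different approach.
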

\begin{proof}
Since $\aao\in \Omega_{\mathrm{sing}_a(F)}\cap \Omega_{\mathrm{sing}_\infty(F)}$, by Lemma~\ref{lem:card-sing}, it holds that $\#(\sing(\Cu(F^h))=\#(\sing(\Cu(F^h,\aao)).$ On the other hand,
since $\aao\in \Omega_{\mathrm{SingOrd}(F^h)}$, by Corollary~\ref{cor:mul+ord}, we know that each ordinary $r$--fold in $\sing(\Cu(F^h))$ generates an ordinary $r$-fold in $\sing(\Cu(F^h,\aao))$.
Therefore, applying the genus formula~\eqref{eq-genus} in the appendix, we conclude the proof.
\end{proof}

\subsection{General case}
Let $F$ be as in~\eqref{eq-F}. But now, differently to the case of Subsection~\ref{subsec-ord-locus}, we do not introduce any assumption on the singular locus of the irreducible curve $\Cu(F^h)$. The key of our analysis is to reduce the general case to the case studied in Subsection~\ref{subsec-ord-locus}. For this purpose, we recall that any irreducible curve is birationally equivalent to a curve having only ordinary singularities; see e.g.~\cite[Theorem 7.4.]{walker}, \cite[Theorem 9.2.4]{mauro} or~\cite[Section 3.2.]{myBook} for a more computational description. This transformation, say $\varphi$, can be seen as a finite sequence of blowups of the irreducible families of non-ordinary singularities and, hence, as a finite sequence of compositions of quadratic Cremona transformations and linear transformations.  We conclude by the fact that the genus is invariant under birational transformations. 

Now, our goal is to find an open subset $\Omega_{\mathrm{blowup}}$ of $\S$ such that, when $\aa$ is specialized in $\Omega_{\mathrm{blowup}}$, the birationality of $\varphi$ is preserved. For this purpose, let $F_0(x,y)=F(x,y)$ and let $\mathcal{F}(F_0^h)$ be an irreducible $\FF$-family of non-ordinary singularities of $\Cu(F_{0}^{h})$ with defining polynomial $m_1(t_1) \in \FF[t_1]$. Let $\FF_{m_1}$ be the quotient field of $\FF[t_1]/\!\!<\!m_1(t_1)\!>$. 
Then we apply a linear transformation $\mathcal{L}_1$, given by a matrix $M_1 \in \text{M}_{3 \times 3}(\FF_{m_1})$, and the Cremona transformation $\Qa_1=(yz:xz:xy)$ as described in the blow up basic step in Subsection~\ref{subsec-genus} of the appendix.
Let $\Delta_1:=\det(M_1)$ and let $\Cu(F_1^h)$ be the curve over $\overline{\FF_{m_1}}$ obtained after the quadratic transformation $\Qa_1 \circ \mathcal{L}_1$. Note that $F_{1}^{h}$, the quadratic transformation of $F_0^h$, is the cofactor of $F_{0}^h(\mathcal{Q}_1(\mathcal{L}_1))$ not being divisible by neither $x$, $y$ nor $z$.
We repeat the above process for $F_1^h(t_1,x,y,z)$, $F_2^h(t_1,t_2,x,y,z), \ldots, F_r^h(t_1,\ldots,t_r,x,y,z)$ until all singularities of $\Cu(F_r^h)$ are ordinary.
Then \[ \varphi(\aa,\gamma,t_1,\ldots,t_r,x,y,z) = (\Qa_r \circ \mathcal{L}_r) \circ \cdots \circ (\Qa_1 \circ \mathcal{L}_1) .\]
Note that $F_r^h$ is defined over $\FF(t_1,\ldots,t_r)$ and $\Cu(F_r^h)$ over the algebraic closure of $\FF(t_1,\ldots,t_r)$.
In addition, $\FF(t_1,\ldots,t_r)=\K(\aa,\gamma,t_1,\ldots,t_r)=\LL(\gamma,t_1,\ldots,t_r)$. So, we consider a primitive element of the extension over $\LL$, say $\gamma^*$, and we work over $\LL(\gamma^*)=\LL(\gamma,t_1,\ldots,t_r)$; note that results in Section \ref{sec-specialization} apply to this new frame. In this situation, let us denote by $\Delta=\Delta_1 \cdots \Delta_r \in \LL(\gamma^*)$ the product of the determinants of the linear transformations $\mathcal{L}_1,\ldots,\mathcal{L}_m$. In addition, let
$$\mathcal{M}:=\{ \text{all entries of $M_i\in \mathrm{M}_{3\times 3}(\LL(\gamma^*))\}_{i\in \{1,\ldots,r\}}$}.$$
and let
$$
\begin{array}{lc}
\mathcal{B}:=&\{ F_{i}^h(\mathcal{Q}_{i+1}(\mathcal{L}_{i+1}))(\aa,\gamma^*,0,y,z), F_{i}^h(\mathcal{Q}_{i+1}(\mathcal{L}_{i+1}))(\aa,\gamma^*,x,0,z), \\
\noalign{\vspace*{1mm}}
&  F_{i}^h(\mathcal{Q}_{i+1}(\mathcal{L}_{i+1}))(\aa,\gamma^*,x,y,0)\}_{i\in \{0,\ldots,r-1\}}.
\end{array}$$

\begin{definition}\label{def-openBlup}
With the notation introduced above, we define the set $$\Omega_{\mathrm{blowup}(F)}:= \Omega_F \cap \bigcap_{h\in \mathcal{M}} \Omega_{\mathrm{def}(h)} \cap \Omega_{\mathrm{nonZ}({\Delta})} \cap \bigcap_{h\in \mathcal{B}} \Omega_{\mathrm{nonZ}(h)}.$$
\end{definition}

The previous observations lead to the following result.

\begin{lemma}\label{lem:specializationBirational}
Let $\aao \in \Omega_{\mathrm{blowup}}(F)$. Then $\varphi(\aao,(\gamma^*)^0,x,y,z)$ is birational.
\end{lemma}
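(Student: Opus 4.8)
The plan is to show that the composite map $\varphi$ remains birational after specialization by proving that each of its elementary factors---the linear change $\mathcal{L}_i$ and the quadratic Cremona transformation $\mathcal{Q}_i$---specializes to a birational map, and that the chain of specialized maps composes correctly. Birationality of a composition is inherited from the birationality of each factor (since the composition of birational maps is birational), so the essential task is to control each step individually under specialization of $\aao$.

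First I would handle the linear transformations. Each $\mathcal{L}_i$ is given by a matrix $M_i$ with entries in $\LL(\gamma^*)$, and it is invertible (hence birational on $\mathbb{P}^2$) precisely because $\Delta_i=\det(M_i)\neq 0$. Since $\aao\in\bigcap_{h\in\mathcal{M}}\Omega_{\mathrm{def}(h)}$, every entry of every $M_i$ specializes to a well-defined element of $\overline{\K}$ by Lemma~\ref{lem:defVan}~(1); and since $\aao\in\Omega_{\mathrm{nonZ}(\Delta)}$ with $\Delta=\Delta_1\cdots\Delta_r$, we get $\Delta_i(\aao)\neq 0$ for every $i$ (a product vanishes only if a factor does). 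Thus each specialized matrix $M_i(\aao)$ is well-defined and invertible, so the specialized linear maps $\mathcal{L}_i(\aao)$ are birational on $\mathbb{P}^2(\overline{\K})$.

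Next I would treat the Cremona transformations. The map $\mathcal{Q}_i=(yz:xz:xy)$ has constant coefficients and is birational on $\mathbb{P}^2$ with inverse of the same shape; this is unaffected by specialization. The subtle point is that, at each blow-up basic step, the transformed curve $F_{i+1}^h$ is extracted from $F_i^h(\mathcal{Q}_{i+1}(\mathcal{L}_{i+1}))$ as the cofactor after dividing out the maximal powers of $x$, $y$, $z$ that appear. For the specialized and generic constructions to agree---i.e. for the composite specialized map to genuinely realize $\varphi(\aao,(\gamma^*)^0)$ as the same sequence of blow-ups---one needs the factorization pattern in $x,y,z$ to be preserved. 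This is exactly what the conditions $\aao\in\bigcap_{h\in\mathcal{B}}\Omega_{\mathrm{nonZ}(h)}$ guarantee: the polynomials in $\mathcal{B}$ are the restrictions of $F_i^h(\mathcal{Q}_{i+1}(\mathcal{L}_{i+1}))$ to the coordinate lines $x=0$, $y=0$, $z=0$, and their non-vanishing after specialization (Lemma~\ref{lem:defVan}~(2)) ensures that no spurious extra factor of $x$, $y$, or $z$ is created nor destroyed by the specialization. Hence the cofactor taken generically specializes to the cofactor taken after specialization, so $F_{i+1}^h(\aao)$ is the quadratic transform of $F_i^h(\aao)$ via $\mathcal{Q}_{i+1}(\mathcal{L}_{i+1})(\aao)$.

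Finally, I would assemble these facts: since $\aao\in\Omega_F$, the base curve $F_0^h=F^h$ specializes with invariant degree and proper homogenization by Lemma~\ref{lem:omegaG}, so the construction is anchored correctly at the starting curve; then, proceeding inductively on $i$ from $0$ to $r$ using the two previous paragraphs, each factor $\mathcal{Q}_{i+1}\circ\mathcal{L}_{i+1}$ specializes to a birational map and the transformed curves specialize coherently along the chain. Therefore $\varphi(\aao,(\gamma^*)^0,x,y,z)=(\mathcal{Q}_r\circ\mathcal{L}_r)\circ\cdots\circ(\mathcal{Q}_1\circ\mathcal{L}_1)$, evaluated at $\aao$, is a composition of birational maps and hence birational. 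The main obstacle, and the crux of the argument, is the cofactor/factorization-pattern control in the Cremona step: one must argue carefully that division by $x,y,z$ commutes with specialization, which is precisely where the set $\mathcal{B}$ and the condition $\Omega_{\mathrm{nonZ}(h)}$ for $h\in\mathcal{B}$ enter; everything else reduces to the well-definedness and non-vanishing lemmas established earlier in Section~\ref{sec-specialization}.
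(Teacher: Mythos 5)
Your proposal is correct and follows essentially the same route as the paper: the conditions $\aao\in\bigcap_{h\in\mathcal{M}}\Omega_{\mathrm{def}(h)}\cap\Omega_{\mathrm{nonZ}(\Delta)}$ make each $\mathcal{Q}_i\circ\mathcal{L}_i$ well-defined and birational after specialization, and birationality of the composition follows. The additional material on the cofactor extraction and the set $\mathcal{B}$ is not needed for this statement, which concerns only the map $\varphi$ on $\mathbb{P}^2$; the paper defers that curve-tracking argument to Lemma~\ref{lemm:biratR}.
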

\begin{proof}
Since $\aao \in \Omega_{\mathrm{blowup}}(F) \subset \bigcap_{h\in \mathcal{M}} \Omega_{\mathrm{def}(h)} \cap \Omega_{\mathrm{nonZ}({\Delta})}$, all $\mathcal{Q}_i \circ \mathcal{L}_i$ are well--defined, and birational, when $\aa$ is specialized as $\aao$. So $\varphi(\aao,(\gamma^*)^0,x,y,z)$ is birational.
\end{proof}

\begin{lemma}\label{lemm:biratR}
Let $\aao \in \Omega_{\mathrm{blowup}}(F)$ and let  $\varphi^0:=\varphi(\aao,(\gamma^*)^0,x,y,z)$. If $F_0(\aao,\gamma^0,x,y,z)$ is irreducible, then $F_r(\aao,(\gamma^*)^0,x,y,z)$ is the quadratic transformation of $F_0(\aao,\gamma^0,x,y,z)$.
\end{lemma}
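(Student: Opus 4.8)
The plan is to argue by induction on the number $r$ of elementary quadratic transformations, reducing everything to a single blow-up step and then chaining the steps. Fix $\aao\in\Omega_{\mathrm{blowup}}(F)$ and write $P_i:=F_i^h(\Qa_{i+1}(\mathcal{L}_{i+1}))$ for the polynomial produced by the $(i{+}1)$-st transformation, so that, by the definition of the basic blow-up step, $F_{i+1}^h$ is the cofactor of $P_i$ after deleting the monomial factor $x^{a_i}y^{b_i}z^{c_i}$ coming from the multiplicities of $\Cu(F_i^h)$ at the three fundamental points of $\Qa_{i+1}$. The inductive hypothesis will be that the specialization $F_i^0$ is well-defined, has the same degree as $F_i$, and equals the $i$-fold specialized quadratic transform of $F_0^0$. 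All of this takes place over the common extension $\LL(\gamma^*)=\LL(\gamma,t_1,\ldots,t_r)$, to which the results of Section~\ref{sec-specialization} apply.

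First I would establish the single-step commutation of substitution and specialization. Since $\aao\in\bigcap_{h\in\mathcal{M}}\Omega_{\mathrm{def}(h)}\cap\Omega_{\mathrm{nonZ}(\Delta)}$, the map $\mathcal{L}_{i+1}$ specializes to a well-defined invertible linear map $\mathcal{L}_{i+1}^0$ (as in the proof of Lemma~\ref{lem:specializationBirational}), while $\Qa_{i+1}$ has constant coefficients. Because specialization at $\aao$ is a ring homomorphism wherever the involved denominators do not vanish, and because $\deg(F_i^0)=\deg(F_i)$ by the inductive hypothesis (so no leading form is lost), substitution commutes with specialization and $P_i^0=(F_i^0)^h(\Qa_{i+1}(\mathcal{L}_{i+1}^0))$. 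This identifies the specialized substituted polynomial with the one obtained by first specializing and then applying the specialized transformation.

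The heart of the argument is the exact preservation of the monomial factor, i.e. that $P_i^0=x^{a_i}y^{b_i}z^{c_i}(F_{i+1}^h)^0$ with $(F_{i+1}^h)^0$ still divisible by none of $x,y,z$. Since specialization is a ring homomorphism, the monomial already divides $P_i^0$ with quotient $(F_{i+1}^h)^0$; what must be excluded is that the quotient acquires a further factor of some variable, equivalently that one of the powers $a_i,b_i,c_i$ strictly increases. For each coordinate direction whose fundamental point does not lie on $\Cu(F_i^h)$ (so that $P_i$ carries no factor of that variable), the corresponding restriction $P_i|_{x=0}$, $P_i|_{y=0}$ or $P_i|_{z=0}$ is a nonzero element of $\mathcal{B}$, and the condition $\aao\in\Omega_{\mathrm{nonZ}(h)}$ keeps it nonzero after specialization, preventing a new factor in that variable. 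For the remaining direction, attached to the family being blown up, the relevant power equals the multiplicity of $\Cu(F_i^h)$ at that fundamental point, and this multiplicity is preserved under specialization by Lemma~\ref{lem:r-fold} and Corollary~\ref{cor:mul+ord}; hence that power cannot increase either. Therefore $(F_{i+1}^h)^0$ is exactly the cofactor of $P_i^0$, that is, the single-step quadratic transform of $F_i^0$, and the degree bookkeeping $\deg F_{i+1}=2\deg F_i-(a_i+b_i+c_i)$ propagates degree preservation to step $i{+}1$.

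Finally I would chain the single-step identities for $i=0,\ldots,r-1$, obtaining that $F_r^0$ arises from $F_0^0$ by the successive specialized maps $\Qa_{i+1}(\mathcal{L}_{i+1}^0)$ and monomial divisions, i.e. $F_r^0$ is the quadratic transformation of $F_0^0$ along $\varphi^0$. The hypothesis that $F_0^0$ is irreducible, together with the birationality of $\varphi^0$ furnished by Lemma~\ref{lem:specializationBirational}, ensures that each intermediate $\Cu(F_i^0)$ is a genuine irreducible curve, so that the cofactor extracted at every step is the true defining polynomial of the transformed curve and the identification is unambiguous. I expect the main obstacle to be exactly this exact preservation of the monomial factor: one must rule out extra powers of $x$, $y$ or $z$ appearing after specialization, which is where the nonvanishing conditions collected in $\mathcal{B}$ (for the fundamental points off the curve) and the multiplicity-preservation machinery of Section~\ref{sec-specialization} (for the exceptional direction) are indispensable.
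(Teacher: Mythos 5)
Your proposal follows the same overall route as the paper's proof: induction over the elementary quadratic transformations, the observation that specialization is a ring homomorphism so that $F_{i}^{h}(\Qa_{i+1}(\mathcal{L}_{i+1}))$ specializes to $x^{n_1}y^{n_2}z^{n_3}$ times the specialized cofactor, the use of the nonvanishing conditions attached to $\mathcal{B}$ to exclude extra variable factors, and the birationality of the specialized maps (Lemma~\ref{lem:specializationBirational}) together with the irreducibility of $F_0^h(\aao,\gamma^0,x,y,z)$ to keep every intermediate curve irreducible, so that the extracted cofactor is the genuine quadratic transform at each step. The one place you diverge is the ``exceptional'' coordinate direction, where the monomial exponent is positive: there you invoke Lemma~\ref{lem:r-fold} and Corollary~\ref{cor:mul+ord} to argue that the multiplicity of the blown-up family, hence the exponent, is preserved. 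That step is not covered by the stated hypotheses: those results require $\aao\in\Omega_{\mathrm{mult}(\mathcal{F})}$ (resp.\ $\Omega_{\mathrm{ord}(\mathcal{F})}$), and Definition~\ref{def-openBlup} does not include these sets in $\Omega_{\mathrm{blowup}(F)}$. The paper instead reads the elements of $\mathcal{B}$ as controlling the cofactor $F_{i+1}^h$ itself, so that $\aao\in\bigcap_{h\in\mathcal{B}}\Omega_{\mathrm{nonZ}(h)}$ directly yields that none of $x$, $y$, $z$ divides the specialized cofactor in all three directions at once, with no multiplicity argument needed. You did correctly spot the real subtlety here --- the literal restriction of $F_{i}^{h}(\Qa_{i+1}(\mathcal{L}_{i+1}))$ to the exceptional coordinate line is identically zero, so the nonvanishing condition cannot be applied to it as written --- but the clean repair is to apply the condition to the cofactor's restrictions; under the literal reading your argument would need $\Omega_{\mathrm{mult}(\mathcal{F})}$ (for every blown-up family, at every level of the neighborhood graph) added to the hypotheses. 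Everything else in your proposal matches the paper's proof.
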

\begin{proof}
First we observe that because of (the proof of) Lemma~\ref{lem:specializationBirational} each $\varphi_i:=\mathcal{Q}_i \circ \mathcal{L}_i$ is well defined at $\aao$ and it is birational. Let us prove the result by induction. By hypothesis $F_{0}^{h}(\aao,\gamma^0,x,y,z)$ is irreducible. We have the equality $F_{0}^{h}(\varphi_i)=x^{n_1}y^{n_2}z^{n_3} F_{1}^{h}$ for some $n_i\in \mathbb{N}$ and that neither $x$, $y$ nor $z$ divides $F_{1}^{h}$. So, $F_{0}(\varphi_i)(\aao,(\gamma^*)^0,x,y,z)=x^{n_1}y^{n_2}z^{n_3} F_{1}^{h}(\aao,(\gamma^*)^0,x,y,z)$. 
Moreover, since $\aao\in \bigcap_{h\in \mathcal{B}} \Omega_{\mathrm{nonZ}(h)}$, we know that neither $x$, $y$ nor $z$ divides $F_1(\aao,(\gamma^*)^0,x,y,z)$. Furthermore, since $\varphi_i$ is birational when specialized at $\aao$ and $F_{0}^{h}(\aao,\gamma^0,x,y,z)$ is irreducible, we have that $F_{1}^{h}(\aao,(\gamma^*)^0,x,y,z)$ is also irreducible.
Thus, $F_{1}^{h}(\aao,(\gamma^*)^0,x,y,z)$ is the quadratic transformation of $F_{0}^{h}$. Now, the $i$-induction step is reasoned analogously using that, by induction, $F_{i}^{h}(\aao,(\gamma^*)^0,x,y,z)$ is irreducible.
\end{proof}

With this, we can now give an open set where the genus is preserved.

\begin{definition}\label{def-genus}
Let $F \in \FF[x,y]$ be as in~\eqref{eq-F}, and let $G \in \FF(\gamma^*)[x,y]$ be the polynomial obtained after the blowup process of $F^h$. We define the set $$\Omega_{\mathrm{genus}(F)}:= \Omega_{\mathrm{blowup}(F)} \cap \Omega_{\mathrm{genusOrd}(G^h)}.$$
\end{definition}

\begin{theorem}\label{thm:genusPreservation}
Let $F \in \FF[x,y]$ be as in~\eqref{eq-F}, and let $\aao \in \Omega_{\mathrm{genus}(F)}$. If $\Cu(F,\aao)$ is irreducible, then
\[ \genus(\Cu(F))= \genus(\Cu(F,\aao)). \]
\end{theorem}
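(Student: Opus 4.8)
The plan is to factor the desired equality through the curve $\Cu(G)$ obtained from $\Cu(F^h)$ by the blowup process $\varphi$, establishing three genus equalities and chaining them. On the generic side, the transformation $\varphi$ is by construction a finite composition of linear maps and quadratic Cremona transformations that turns $\Cu(F^h)$ into the curve $\Cu(G^h)$ having only ordinary singularities, and it is birational over $\FF(\gamma^*)$. Since the genus is invariant under birational transformations, the first equality $\genus(\Cu(F))=\genus(\Cu(G))$ holds immediately.

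Next I would transport this birational picture to the specialized curves. Since $\aao\in\Omega_{\mathrm{genus}(F)}\subset\Omega_{\mathrm{blowup}(F)}$, Lemma~\ref{lem:specializationBirational} guarantees that $\varphi^0:=\varphi(\aao,(\gamma^*)^0,x,y,z)$ is still birational. Because $\Cu(F,\aao)$ is assumed irreducible, Lemma~\ref{lemm:biratR} tells us that $G(\aao,(\gamma^*)^0,x,y)$ is exactly the quadratic transformation of $F(\aao,\gamma^0,x,y)$; following the induction in that proof, $\Cu(G,\aao)$ is therefore irreducible, and $\varphi^0$ maps $\Cu(F,\aao)$ birationally onto $\Cu(G,\aao)$. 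Invariance of the genus under $\varphi^0$ then yields the second equality $\genus(\Cu(F,\aao))=\genus(\Cu(G,\aao))$.

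It remains to compare the genus of $\Cu(G)$ with that of its specialization. Here $G$ plays the role of $F$ in Subsection~\ref{subsec-ord-locus}, but with the primitive element $\gamma^*$ in place of $\gamma$; since $\Cu(G^h)$ has only ordinary singularities, $\aao\in\Omega_{\mathrm{genusOrd}(G^h)}$, and we have just shown $\Cu(G,\aao)$ to be irreducible, Theorem~\ref{theorem:genus-strong} applies directly to $G$ and gives $\genus(\Cu(G))=\genus(\Cu(G,\aao))$. Chaining the three equalities,
\[ \genus(\Cu(F))=\genus(\Cu(G))=\genus(\Cu(G,\aao))=\genus(\Cu(F,\aao)), \]
which is the assertion.

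I expect the main obstacle to be the second step: verifying that the specialized blowup $\Cu(G,\aao)$ is simultaneously irreducible and genuinely the $\varphi^0$-image of $\Cu(F,\aao)$ — that is, that specialization commutes with the entire blowup process — so that Theorem~\ref{theorem:genus-strong} is legitimately applicable to $G$ rather than to some degenerate specialization. This is precisely what $\Omega_{\mathrm{blowup}(F)}$ is designed to control: the nonvanishing of the determinants collected in $\Delta$ keeps each $\mathcal{Q}_i\circ\mathcal{L}_i$ birational after specialization, while the nonvanishing of the forms in $\mathcal{B}$ prevents spurious factors of $x,y,z$ from being lost or gained, so that no unwanted components appear. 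These guarantees are exactly the content of Lemmas~\ref{lem:specializationBirational} and~\ref{lemm:biratR}, after which the remaining generic-versus-specialized comparison for $G$ is handed off wholesale to Theorem~\ref{theorem:genus-strong}.
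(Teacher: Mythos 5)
Your proposal is correct and follows essentially the same route as the paper: the same chain of three genus equalities via $\Cu(G)$, justified by Lemma~\ref{lem:specializationBirational}, Lemma~\ref{lemm:biratR}, and Theorem~\ref{theorem:genus-strong}. If anything, you are slightly more careful than the paper in explicitly noting that the irreducibility of $\Cu(G^h(\aao,(\gamma^*)^0,x,y,z))$, needed to invoke Theorem~\ref{theorem:genus-strong}, is extracted from the induction in the proof of Lemma~\ref{lemm:biratR}.
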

\begin{proof}
We start recalling that the quadratic transformation defines a birational map (see e.g.~\cite[Chapter V, Section 4.2]{walker}) from the original curve on its quadratic transformed curve (see the precise definition of transformed curve in Step 2 of the blow up at a point in Subsection~\ref{subsec-genus} in the appendix), and that the genus is a birational invariant (see e.g.~\cite[Theorem 7.2.2 or Prop. 8.5.1]{mauro}). Then, since $G^h$ is the quadratic transformation of $F^h$, we have that
\begin{equation}\label{eqq-genus1}
\genus(\Cu(F^h(\aa,x,y,z)))=\genus(\Cu(G^h(\aa,\gamma^*,x,y,z))).
\end{equation}

Let $\varphi^0$ denote the map $\varphi(\aao,(\gamma^*)^0,x,y,z)$. Since $\aao \in \Omega_{\mathrm{blowup}(F)}$, by Lemma~\ref{lem:specializationBirational}, $\varphi^0$ is birational and, by Lemma~\ref{lemm:biratR}, $G^h(\aao,(\gamma^*)^0,x,y,z)$ is the quadratic transformation of $F^h(\aao,\gamma^0,x,y,z)$ via $\varphi^0$. Therefore, using the same argumentation as in~\eqref{eqq-genus1}, on gets
\begin{equation}\label{eqq-genus2}
\genus(\Cu(F^h(\aao,\gamma^0,x,y,z)))=\genus(\Cu(G^h(\aao,(\gamma^*)^0,x,y,z))).
\end{equation}
Moreover, since $\aao \in \Omega_{\mathrm{genusOrd}(G^h)}$, by Theorem~\ref{theorem:genus-strong}, it holds that
\begin{equation}\label{eqq-genus3}
\genus(\Cu(G^h(\aa,\gamma^*x,y,z))) = \genus(\Cu(G^h(\aao,(\gamma^*)^0,x,y,z))) .
\end{equation}
Now, the proof follows from~\eqref{eqq-genus1}, \eqref{eqq-genus2} and~\eqref{eqq-genus3}.
\end{proof}

\section{Birational Parametrization of Parametric Rational Curves}\label{sec-parametricCurves}
In Section~\ref{sec-genus}, and more precisely in Theorems~\ref{theorem:genus-strong} and~\ref{thm:genusPreservation}, we have described open subsets of $\S$ where the genus of the curve is preserved under specializations; even in Corollary~\ref{cor:genus-zero} the particular case of genus zero was treated. Nevertheless, in all these results the additional condition on the irreducibility, over $\overline{\K}$, of the specialized polynomial was required. Avoiding the irreducibility is in general a difficult problem related to the Hilbert irreducibility problem (see e.g.~\cite{Serre}). More precisely, there is no algorithm known that finds for given irreducible $F \in \FF[x,y]$ the specializations $\aao \in \S$ such that $F(\aao,x,y)$ is reducible. 
Nevertheless, in this section, we will analyze the particular case where $F$ defines a genus zero curve. In this case, we will introduce an open subset where the irreducibility is guaranteed.

For this purpose, throughout this section, let assume that $F \in \FF[x,y]$ is as in~\eqref{eq-F} and additionally assume that $\Cu(F)$ is rational. Moreover, let us assume that $\cP$ is a proper (i.e. birational) parametrization of $\Cu(F)$ which can be computed, for instance, by the algorithm described in Subsection~\ref{subsec:param} in the appendix. Note that, in general, one may need to extend $\FF$ with an algebraic element $\delta$ of degree two (see Subsection~\ref{sec-FieldOfparametrization}). 
If $\#(\mathbf{a})=1$ and $\deg(\gamma)=1$, or $\deg_{x,y}(F)$ is odd, then no extension of $\FF$ is required (see Remark~\ref{rem:TsenGeneralization} and Theorem~\ref{theorem:HHext}).

So, we may consider a primitive element of $\LL(\gamma,\delta)$, say $\gamma^*$, and express our parametrization in $\LL(\gamma^*)$. Throughout this section, by abuse of notation, let $\gamma$ play the role of $\gamma^*$ and let $\FF$ denote the field $\LL(\gamma^*)$. Let us write the proper parametrization $\cP$ of $\Cu(F)$ as
\begin{equation}\label{eq-Pa}  
\Pa(\aa,\gamma,t)=\left(\dfrac{p_1}{q_1},\dfrac{p_2}{q_2} \right) \in \FF(t)^2 \setminus \FF^2
\end{equation}
where we assume that $\Pa$ is in reduced form, that is $\gcd(p_1,q_1)=\gcd(p_2,q_2)=1$.

Let us start with the simple case of degree one.

\begin{definition}\label{def:deg1}
Let $F=A_2(\aa,\gamma)x+A_1(\aa,\gamma) y+A_0(\aa,\gamma)\in \FF[x,y] \setminus \FF$. Since $\Cu(F)$ is a line, it can be properly parametrized by a polynomial parametrization of the form $\Pa(\aa,\gamma,t)=(\lambda_1 t+\lambda_0, \mu_1 t+ \mu_0)\in \FF(t)^2\setminus \FF^2$. We define the set (see Def.~\ref{def-open1} and~\ref{def:omegaG})
\[ \Omega_{\mathrm{proper}(\Pa)}:= \Omega_{F} \cap \Omega_{\mathrm{def}(\lambda_1 t+\lambda_0)} \cap \Omega_{\mathrm{def}(\mu_1 t+\mu_0)} \cap \left( \Omega_{\mathrm{nonZ}(\lambda_1)} \cup \Omega_{\mathrm{nonZ}(\mu_1)}\right).\]
\end{definition}

\begin{proposition}\label{prop:deg1}
Let $F$ and $\cP$ be as in Def.~\ref{def:deg1}. For every $\aao\in \Omega_{\mathrm{proper}(\Pa)}$, it holds that $\Pa(\aao,t)$ is a proper polynomial parametrization of $\Cu(F,\aao)$.
\end{proposition}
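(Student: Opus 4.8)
The plan is to verify the two defining requirements of a proper parametrization under specialization: first that $\Pa(\aao,t)$ is a well-defined parametrization of $\Cu(F,\aao)$, and second that it is birational (proper). Since $\Cu(F)$ is a line of degree one, the situation is considerably simpler than the general case, and the main obstacle is essentially bookkeeping rather than a deep argument.

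First I would check that the specialization is well-defined and produces a genuine parametrization. Because $\aao\in \Omega_F$, Lemma~\ref{lem:omegaG} guarantees that $F(\aao,\gamma^0,x,y)$ has degree one, so $\Cu(F,\aao)$ is again a line. Because $\aao\in \Omega_{\mathrm{def}(\lambda_1 t+\lambda_0)} \cap \Omega_{\mathrm{def}(\mu_1 t+\mu_0)}$, Lemma~\ref{lem:defVan}~(1) ensures that the coefficients $\lambda_i^0,\mu_i^0$ are all well-defined, so $\Pa(\aao,t)=(\lambda_1^0 t+\lambda_0^0,\mu_1^0 t+\mu_0^0)$ is a well-defined pair in $\K_{\aao}(t)^2$. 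The identity $A_2\,\lambda_1+A_1\,\mu_1=0$ and $A_2\,\lambda_0+A_1\,\mu_0+A_0=0$, which holds over $\FF$ because $\Pa$ parametrizes $\Cu(F)$, specializes directly (all quantities involved being well-defined), so $F(\aao,\gamma^0,\Pa(\aao,t))=0$ and the specialized pair lies on the specialized line.

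Next I would establish that $\Pa(\aao,t)$ is nonconstant and proper. The condition $\aao\in \Omega_{\mathrm{nonZ}(\lambda_1)} \cup \Omega_{\mathrm{nonZ}(\mu_1)}$ guarantees, via Lemma~\ref{lem:defVan}~(2), that at least one of $\lambda_1^0,\mu_1^0$ is nonzero; hence $\Pa(\aao,t)\notin \K_{\aao}^2$ and is a genuine parametrization. For properness, the key observation is that a polynomial parametrization of a line of the form $(\lambda_1 t+\lambda_0,\mu_1 t+\mu_0)$ with $(\lambda_1,\mu_1)\neq(0,0)$ is automatically birational: the map $t\mapsto \Pa(\aao,t)$ is affine-linear and injective on $\overline{\K}$ precisely when the direction vector $(\lambda_1^0,\mu_1^0)$ is nonzero, which is exactly what the union of nonvanishing sets provides. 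One can exhibit the inverse explicitly, reading $t$ off from whichever coordinate has nonzero leading coefficient.

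The step I expect to require the most care is not any single computation but rather confirming that the degree-one structure truly trivializes the properness question, so that no further resultant or gcd conditions (of the kind appearing in Definition~\ref{def-gcd} and Lemma~\ref{lem-gcd}) are needed. Unlike the higher-degree case, where preserving birationality under specialization forces control of the gcd of numerator and denominator of the inverse map, here the inverse is a single affine-linear expression whose well-definedness follows directly from the nonvanishing of one leading coefficient. Thus the whole argument reduces to the two applications of Lemma~\ref{lem:defVan} together with the elementary injectivity of a nonconstant affine map, and I would present it in that order.
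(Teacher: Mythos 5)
Your proposal is correct and follows essentially the same route as the paper's proof: well-definedness of the specialized line and of the coefficients via $\Omega_F$ and the $\Omega_{\mathrm{def}}$ sets, nonconstancy via the union of the $\Omega_{\mathrm{nonZ}}$ sets, the identity $F(\aao,\gamma^0,\Pa(\aao,\gamma^0,t))=0$ obtained by specializing the relation over $\FF$, and properness from the injectivity of a nonconstant affine-linear map. The only cosmetic difference is that the paper closes by invoking B\'ezout to identify the curve traced by $\Pa(\aao,\gamma^0,t)$ with $\Cu(F,\aao)$, whereas you deduce the same equality directly from irreducibility of the specialized line; both are valid.
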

\begin{proof}
Since $\aao\in \Omega_{F}$, by Lemma~\ref{lem:omegaG}, $F(\aao,\gamma^0,x,y)$ is well defined and $\Cu(F,\aao)$ is an affine line, obviously irreducible. Since $\aao\in \Omega_{\mathrm{def}(\lambda_1 t+\lambda_0)} \cap \Omega_{\mathrm{def}(\mu_1 t+\mu_0)}$ then $\Pa(\aao,\gamma^0,t)$ is well--defined. Moreover, since $\aao\in  \left( \Omega_{\mathrm{nonZ}(\lambda_1)} \cup \Omega_{\mathrm{nonZ}(\mu_1)}\right)$, $\Pa(\aao,\gamma^0,t)$ is a polynomial parametrization. Furthermore, $F(\aao,\gamma^0,\Pa(\aao,\gamma^0,t))=0$. So, it parametrizes the line $\Cu(F,\aao)$. In addition, since the polynomials in the parametrization are linear, the parametrization is invertible on the line $\Cu(F,\aao)$ and thus proper. Let $\mathcal{D}$ be the irreducible curve parametrized by $\Pa(\aao,\gamma^0,t)$ (that is the Zariski closure of the image of $\overline{\K}$ by  $\Pa(\aao,\gamma^0,t)$). Then, since $F(\aao,\gamma^0,\Pa(\aao,\gamma^0,t))=0$, $\Cu(F,\aao)$ and $\mathcal{D}$ have infinitely many intersection points. 
So, since they are irreducible, by B\'ezout Theorem (see e.g.~\cite{walker}) $\Cu(F,\aao)=\mathcal{D}$ and we conclude that $\Pa(\aao,\gamma^0,t)$ is a proper parametrization of $\Cu(F,\aao)$.
\end{proof}

\begin{remark}\label{rem-caseslineal}
With the notation in Proposition~\ref{prop:deg1}, if $\aao\in \S\setminus  \Omega_{\mathrm{proper}(\Pa)}$, then:
\begin{enumerate}
\item If $\aao\not\in \Omega_F\setminus  \Omega_{\mathrm{def}(F)}$, it holds that $ F(\aao,\gamma^0,x,y)=A_0(\aao,\gamma^0)\in \overline{\K}$, and hence $\Cu(F,\aao)$ does not define an affine curve.
\item If $\aao\not\in \Omega_{\mathrm{def}(\lambda_1 t+\lambda_0)}$ but $\aao\in \Omega_F$ (similarly if $\aao\not\in\Omega_{\mathrm{def}(\mu_1 t+\mu_0)}$), the specialization $\cP(\aao,\gamma^0,t)$ is not well--defined even though $\Cu(F,\aao)$ is a line. In this case, $\Cu(F,\aao)$ is rational and a proper parametrization of the specialized line can be provided.
\item If $\aao\not\in \left( \Omega_{\mathrm{nonZ}(\lambda_1)} \cup \Omega_{\mathrm{nonZ}(\mu_1)}\right)$ but $\aao\in \Omega_F\cap \Omega_{\mathrm{def}(\lambda_1 t+\lambda_0)} \cap \Omega_{\mathrm{def}(\mu_1 t+\mu_0)}$, then $\cP(\aao,\gamma^0,t)\in \overline{\K}^2$ and hence it is not a parametrization although $\Cu(F,\aao)$ is a line. As in case (2), a polynomial parametrization of the specialized line can be provided.
\end{enumerate}
\end{remark}

In the sequel, we assume that $\Cu(F)$ is not a line. We generalize the open subset in Def.~\ref{def:deg1} as follows.

\begin{definition}\label{def-omega}\
\begin{enumerate}
\item Let $\Omega_1:=\Omega_{F}$ (see Def. \ref{def:omegaG}).
\item $\Omega_2:=\Omega_{\mathrm{def}(p_1)}\cap \Omega_{\mathrm{def}(p_2)}\cap \Omega_{\mathrm{nonZ}(q_1)}\cap \Omega_{\mathrm{nonZ}(q_2)}$ (see \eqref{eq-Pa}).
\item We consider the polynomials $G_i=p_i(h)q_i(t)-p_i(t)q_i(h)\in \FF[h][t]\setminus \{0\}$ for $i\in \{1,2\}$. Let $\Omega_{3}:=\Omega_{\mathrm{gcd}(G_1,G_2)}$ (see Def. \ref{def-gcd}).
\item Let $\Omega_{4}:=\Omega_{\mathrm{gcd}(p_1,q_1)} \cap \Omega_{\mathrm{gcd}(p_2,q_2)}$; note that $p_i,q_j\in \FF[t]\subset \FF[h,t]$ and, since $\Cu(F)$ is not a line, the $p_i$ and $q_i$ are  non--zero (see Def. \ref{def-gcd}).
\end{enumerate}
We define $\Omega_{\mathrm{proper}(\Pa)}$ as
\[ \Omega_{\mathrm{proper}(\Pa)}= \bigcap_{i=1}^{4} \Omega_i. \]
\end{definition}

The following theorem generalizes Prop.~\ref{prop:deg1}.

\begin{theorem}\label{thrm:genus0general}
Let $\aao\in \Omega_{\mathrm{proper}(\Pa)}$. Then $\Cu(F,\aao)$ is a rational affine curve in $\overline{\K}^2$ properly parametrized by $\Pa(\aao,\gamma^0,t)$.
\end{theorem}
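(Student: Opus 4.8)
The plan is to verify, for $\aao\in\Omega_{\mathrm{proper}(\Pa)}$, that the specialized tuple $\Pa(\aao,\gamma^0,t)$ is (i) well defined, (ii) a genuine (non-constant) rational map, (iii) lands in $\Cu(F,\aao)$, and (iv) is birational onto $\Cu(F,\aao)$; along the way $\Cu(F,\aao)$ is shown to be an irreducible affine curve. I would mirror the structure already used in the proof of Proposition~\ref{prop:deg1}, replacing the ad hoc linear reasoning there by the general machinery of Section~\ref{sec-specialization}, and exploit that $\Omega_{\mathrm{proper}(\Pa)}=\bigcap_{i=1}^4\Omega_i$ is built precisely so that each ingredient specializes.

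First I would handle well-definedness and non-degeneracy. Since $\aao\in\Omega_1=\Omega_F$, Lemma~\ref{lem:omegaG} gives $\deg(F(\aao,\gamma^0,x,y))=\deg(F)$, so $\Cu(F,\aao)$ is a curve of the same degree $d\ge 2$. Since $\aao\in\Omega_2$, Lemma~\ref{lem:defVan} ensures the numerators $p_i$ specialize and the denominators $q_i$ do not vanish, so $\Pa(\aao,\gamma^0,t)=\bigl(p_1(\aao,\gamma^0,t)/q_1(\aao,\gamma^0,t),\,p_2(\aao,\gamma^0,t)/q_2(\aao,\gamma^0,t)\bigr)$ is a well-defined element of $\overline{\K}(t)^2$. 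The membership $\aao\in\Omega_4=\Omega_{\mathrm{gcd}(p_1,q_1)}\cap\Omega_{\mathrm{gcd}(p_2,q_2)}$, via Corollary~\ref{cor-lemma-gcd}, keeps the two components in reduced form after specialization, so degrees are preserved and $\Pa(\aao,\gamma^0,t)\notin\overline{\K}^2$, i.e.\ the specialized map is genuinely non-constant. That $F(\aao,\gamma^0,\Pa(\aao,\gamma^0,t))=0$ follows because $F(\aa,\gamma,\Pa)=0$ identically in $\FF(t)$ and all the relevant quantities specialize; hence the image of $\Pa(\aao,\gamma^0,t)$ lies in $\Cu(F,\aao)$.

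Next I would settle irreducibility of $\Cu(F,\aao)$ together with birationality, which is where the core argument lives. Let $\mathcal D$ denote the Zariski closure of the image of $\Pa(\aao,\gamma^0,t)$; being the closure of the image of a rational map from the line, $\mathcal D$ is an irreducible rational curve contained in $\Cu(F,\aao)$. The crucial input is $\aao\in\Omega_3=\Omega_{\mathrm{gcd}(G_1,G_2)}$ with $G_i=p_i(h)q_i(t)-p_i(t)q_i(h)$. Over $\FF$, properness of $\Pa$ is equivalent to $\gcd_{\FF[h][t]}(G_1,G_2)$ having $t$-degree one (this is the content of the cited Lemma~3 of~\cite{SW01}): the common roots in $t$ of $G_1,G_2$ describe the fibre $\Pa^{-1}(\Pa(h))$, and properness means this fibre is generically a single point, so the gcd is linear in $t$. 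By Lemma~\ref{lem-gcd}, for $\aao\in\Omega_3$ the specialized gcd equals (up to a unit) the specialization of $g=\gcd_{\FF[h][t]}(G_1,G_2)$ and, decisively, $\deg_t(g(\aao,\gamma^0,h,t))=\deg_t(g(\aa,\gamma,h,t))=1$. Therefore the specialized parametrization still has generically singleton fibres, i.e.\ $\Pa(\aao,\gamma^0,t)$ is injective on a nonempty Zariski-open subset of $\overline{\K}$, which is exactly the definition of proper. In particular $\mathcal D$ is birationally parametrized by $\Pa(\aao,\gamma^0,t)$.

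Finally I would close the gap between $\mathcal D$ and $\Cu(F,\aao)$. Both curves are contained in one another's ambient relation: $\mathcal D\subseteq\Cu(F,\aao)$, and they share infinitely many points since $\mathcal D$ is a curve; as $\mathcal D$ is irreducible and $\Cu(F,\aao)$ has a one-dimensional component through those points, a B\'ezout/dimension argument (as invoked in Proposition~\ref{prop:deg1} via~\cite{walker}) forces $\mathcal D$ to be an irreducible component of $\Cu(F,\aao)$ of the same degree $d$. Since $\deg(\mathcal D)=d=\deg(\Cu(F,\aao))$—the degree of $\mathcal D$ being read off from the preserved degree of the reduced specialized parametrization—we conclude $\mathcal D=\Cu(F,\aao)$, so $\Cu(F,\aao)$ is itself irreducible and properly parametrized by $\Pa(\aao,\gamma^0,t)$. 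I expect the main obstacle to be the birationality step: one must argue carefully that the gcd characterization of properness is preserved under specialization, invoking Lemma~\ref{lem-gcd} to control both the gcd itself and, crucially, its $t$-degree, rather than merely transporting well-definedness of the components; the degree-equality $\deg(\mathcal D)=d$ needed to identify $\mathcal D$ with $\Cu(F,\aao)$ also deserves explicit justification from the preserved parametrization degree.
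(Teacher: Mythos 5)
Your proposal follows essentially the same route as the paper's proof: $\Omega_1,\Omega_2,\Omega_4$ give well-definedness, degree preservation and reduced form of the specialized components (hence non-constancy), the vanishing $F(\aao,\gamma^0,\Pa(\aao,\gamma^0,t))=0$ places the image inside $\Cu(F,\aao)$, and $\Omega_3$ together with Lemma~\ref{lem-gcd} and the gcd characterization of properness from~\cite{SW01} preserves birationality under specialization. The one place where you are vaguer than the paper is the final identification of the parametrized curve $\mathcal D$ with $\Cu(F,\aao)$: you compare \emph{total} degrees and invoke B\'ezout, but the preserved quantity $\deg_t(\Pa^0)=\deg_t(\Pa)$ controls only $\max\{\deg_x,\deg_y\}$ of the implicit equation (via the degree formula for proper parametrizations, Theorem 4.21 in~\cite{myBook}), not the total degree of $\mathcal D$. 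The paper closes this cleanly without B\'ezout: writing $H$ for the irreducible factor of $F^0$ parametrized by $\Pa^0$, it applies the degree formula componentwise to get $\deg_x(H)=\deg_t(p_2^0/q_2^0)=\deg_t(p_2/q_2)=\deg_x(F)$ and likewise $\deg_y(H)=\deg_y(F)$, and then uses the divisibility $H\mid F^0$ (partial degrees being additive under multiplication) to force the cofactor to be constant, whence $\Cu(F,\aao)=\Cu(H)$. You flag this step yourself as needing justification; with the partial-degree argument substituted for the total-degree/B\'ezout one, your proof is complete and matches the paper's.
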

\begin{proof} 
If $\deg(F)=1$, the result follows from Prop.~\ref{prop:deg1}. Let $\deg(F)>1$. Since $\aao\in \Omega_1$, then  $F^0:=F(\aao,\gamma^0,x,y)$ is well--defined and $\deg(\Cu(F))=\deg(\Cu(F,\aao))$. In particular $\Cu(F,\aao)$ is an affine curve. On the other hand, since $\aao\in \Omega_2$, by Lemma~\ref{lem:defVan}~(1), we have that  $\Pa^0:=\Pa(\aao,\gamma^0,t)$ is well--defined. In addition, since $\aao\in \Omega_4$, the leading coefficients of $p_1,p_2,q_1,q_2$ do not vanish at $\aao$ (see Def.~\ref{def-gcd}). Consequently the degree of all numerators and denominators of $\Pa$ after specialization are preserved.
Furthermore, by Corollary~\ref{cor-lemma-gcd}, and using that $p_i/q_i$ are in reduced form, we get that $p_{i}^{0}/q_{i}^{0}$ are also in reduced form. Therefore,
\begin{equation}\label{eq-deg}
\deg_t\left(\dfrac{p_{i}^{0}}{q_{i}^{0}}\right)= \deg_t\left(\dfrac{p_i}{q_i}\right) \,\, \text{for $i\in\{1,2\}$.}
\end{equation}
In particular, $\cP^0\not\in \overline{\K}^2$, and hence $\cP^0$ is a parametrization.
Moreover, $F^0(\cP^0)=0$. Thus, $\Pa^0$ parametrizes the curve defined by one of the factors, say $H(x,y)$, of $F^0$. Let us see that indeed $\Cu(F,\aao)=\Cu(H)$.

Let $G_i(\aa,\gamma,h,t)$ as in Def.~\ref{def-omega}~(3), and let  $G:=\gcd(G_1(\aa,\gamma,h,t),G_2(\aa,\gamma,h,t))$. Let $\tilde{G}_i(h,t)$ be the corresponding polynomials associated, as in Def.~\ref{def-omega}~(3), to  $\Pa^0$. Let $\tilde{G}:=\gcd(\tilde{G}_1,\tilde{G}_2)$. Since $p_{i}^{0}/q_{i}^{0}$ are in reduced form, no simplification of the rational functions have been required, and therefore $\tilde{G}_i(h,t)=G_i(\aao,\gamma^0,h,t)$. Moreover, since $\aao\in \Omega_3$, by Lemma~\ref{lem-gcd}, it holds that
\begin{equation}\label{eq:gcd}
\deg_t(G(\aao,\gamma^0,h,t))=\deg_t(\tilde{G}(h,t)),
\end{equation}
and $\deg_t(G(\aao,\gamma^0,h,t))=\deg_t(G(\aa,\gamma,h,t))$. By~\cite[Theorem 3]{SW01}, since $\Pa(\aa,\gamma,t)$ is proper, we have that $\deg_t(G(\aa,\gamma,h,t))=1$. Therefore, it holds that $\deg_t(\tilde{G}(h,t))=\deg_t(G(\aao,\gamma^0,h,t))=1$. Again by~\cite[Theorem 3]{SW01}, $\Pa^0$ is proper.
On the other hand, by~\eqref{eq-deg}, $\deg_t(\Pa)=\deg_t(\Pa^0)$. Therefore, by Theorem 4.21 in~\cite{myBook}, we have that
\[ \max\{\deg_{x}(F),\deg_{y}(F)\}=\deg_t(\Pa)
= \deg_t(\Pa^0)\\
=  \max\{\deg_{x}(H),\deg_{y}(H)\}.
 \]
Moreover, since $F$ is not linear, by~\eqref{eq-deg}, no component of $\Pa^0$ is constant. Applying again~\cite[Theorem 4.21.]{myBook}, we have that
\[   \deg_{x}(H)=\deg_t\left(\dfrac{p_2^0}{q_2^0}\right)=\deg_t\left(\dfrac{p_2}{q_2}\right)=\deg_{x}(F), \,\,\,\deg_{y}(H)=\deg_t\left(\dfrac{p_1^0}{q_1^0}\right)=\deg_t\left(\dfrac{p_1}{q_1}\right)=\deg_{y}(F).  \]
Finally, since $H(x,y)$ divides $F^0$, one
has that $\Cu(F,\aao)=\Cu(H)$, which concludes the proof.
\end{proof}

\begin{remark}\label{cases} Let us analyze the behavior of $F$ and/or $\cP$ when specializing in $\S\setminus \Omega_{\mathrm{proper}(\cP)}$.
\begin{enumerate}
\item If $\aao\in \S\setminus \Omega_{1}$, since $F\in \K[\aa][x,y]$ (see \eqref{eq-F}), then $F(\aao,x,y)$ is always well--defined, and hence, $\deg(F(\aao,x,y))< \deg(F(\aa,x,y))$. So, it can happen that either $0<\deg(F(\aao,x,y))< \deg(F(\aa,x,y))$, so $\Cu(F,\aao)$ is an affine curve; or $0=\deg(F(\aao,x,y))$, which implies that $\Cu(F,\aao)$ is the empty set or $\overline{\K}^2$.
\item If $\aao\in \S\setminus \Omega_{2}$, then $\Pa(\aao,t)$ is not defined, and hence the specialization fails.
\item If $\aao\in \left(\S\setminus (\Omega_{3}\cap \Omega_{4})\right)\cap \Omega_1\cap \Omega_2$, at least one of the following assertions hold.
\begin{enumerate}
\item $\cP(\aao,t)$ is not proper.
\item $\cP(\aao,t)\in \overline{\K}^2$ and hence, $\cP(\aao,t)$ is not a parametrization.
\item$\cP(\aao,t)$ parametrizes a proper factor of $F(\aao,x,y)$, that is, $\Cu(F,\aao)$ decomposes and one of its components is rational and parametrized by $\cP(\aao,t)$.
\end{enumerate}
\end{enumerate}
\end{remark}

The next result follows from Theorem~\ref{thrm:genus0general} and emphasizes the polynomiality of the parametrizaion.

\begin{corollary}
If $\cP$ is proper and polynomial and $\aao\in \Omega_{\mathrm{proper}(\cP)}$, then $\cP(\aao,\gamma^0,t)$ parametrizes properly and polynomially $\Cu(F,\aao)$.
\end{corollary}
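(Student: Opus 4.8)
The plan is to split the claim into its two assertions: that $\cP(\aao,\gamma^0,t)$ parametrizes $\Cu(F,\aao)$ \emph{properly}, and that it does so \emph{polynomially}. The first assertion is immediate: since $\cP$ is proper and $\aao\in\Omega_{\mathrm{proper}(\cP)}$, Theorem~\ref{thrm:genus0general} already guarantees that $\Cu(F,\aao)$ is a rational affine curve properly parametrized by $\cP(\aao,\gamma^0,t)$. Thus the entire content of the first part of the statement is inherited directly from that theorem, and nothing new has to be established there.

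It remains only to argue that the specialized parametrization is polynomial. First I would make precise what polynomiality of $\cP$ means in the reduced-form setting of~\eqref{eq-Pa}. Writing $\cP=(p_1/q_1,p_2/q_2)$ with $\gcd(p_i,q_i)=1$, if $p_i/q_i\in\FF[t]$ then $q_i$ divides $p_i$ in $\FF[t]$, so coprimality forces each $q_i$ to be a unit of $\FF[t]$, that is, a nonzero element $q_i\in\FF\setminus\{0\}$ which is constant in $t$. Consequently, the only way the specialization could fail to be polynomial is if one of the denominators $q_i^0:=q_i(\aao,\gamma^0)$ were to vanish or become undefined.

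Next I would invoke the containment $\Omega_{\mathrm{proper}(\cP)}\subset\Omega_2$ coming from Definition~\ref{def-omega}, where $\Omega_2=\Omega_{\mathrm{def}(p_1)}\cap\Omega_{\mathrm{def}(p_2)}\cap\Omega_{\mathrm{nonZ}(q_1)}\cap\Omega_{\mathrm{nonZ}(q_2)}$. By Lemma~\ref{lem:defVan}~(1), membership in $\Omega_{\mathrm{def}(p_i)}$ ensures that $p_i^0$ is well-defined; by Lemma~\ref{lem:defVan}~(2), membership in $\Omega_{\mathrm{nonZ}(q_i)}$ ensures that $q_i^0\neq 0$. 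Since each $q_i$ is a nonzero constant in $t$, its specialization $q_i^0$ is a nonzero element of $\overline{\K}$. Hence $\cP(\aao,\gamma^0,t)=(p_1^0/q_1^0,p_2^0/q_2^0)\in\overline{\K}[t]^2$ is polynomial, and together with the properness supplied by Theorem~\ref{thrm:genus0general} this completes the argument.

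I do not anticipate a genuine obstacle, as the statement is essentially a bookkeeping refinement of Theorem~\ref{thrm:genus0general}. The only point demanding a little care is the reduced-form characterization of a polynomial parametrization, namely that polynomiality together with $\gcd(p_i,q_i)=1$ forces the denominators to be field constants; this is exactly what makes the sets $\Omega_{\mathrm{nonZ}(q_i)}$ already contained in $\Omega_{\mathrm{proper}(\cP)}$ enough to keep the specialized denominators nonvanishing constants, thereby preserving polynomiality.
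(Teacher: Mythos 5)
Your proposal is correct and follows essentially the same route the paper intends: the paper states the corollary without a separate proof, noting only that it follows from Theorem~\ref{thrm:genus0general}, and your argument simply makes explicit the one missing detail, namely that polynomiality plus $\gcd(p_i,q_i)=1$ forces the $q_i$ to be constants in $t$, so membership in $\Omega_2\supset\Omega_{\mathrm{proper}(\cP)}$ keeps the specialized denominators nonzero constants and hence preserves polynomiality. No gap.
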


We now analyze the normality (i.e. the surjectivity, see~\cite{sendraNormal},~\cite{myBook}) of the parametrization. We recall that any parametrization can be reparametrized surjectively (see~\cite[Theorem 6.26]{myBook}). This reparametrization requires, in our case, a new algebraic extension of $\FF$ via a new algebraic element. Alternatively, one may reparametrize normally the specialized parametrizations. In the following we deal with the case where $\cP$ is already normal and we want to preserve this property through the specializations.
For this purpose, we first introduce a new definition. We recall that for an affine plane parametrization $\cP$ of a curve $\Cu$, $\Cu\setminus \cP(\overline{\K})$ contains at most one point. That point, if it exists, is called the \textit{critical point} of the parametrization (see~\cite[Def. 6.24]{myBook}).

\begin{definition}
Let $\cP$ be as in~\eqref{eq-Pa}. If $\cP$ is normal, we define the set
\[  \Omega_{\mathrm{normal}(\cP)}:=\left\{
\begin{array}{cl}
\S & \text{if $\deg_t(p_1)>\deg_t(q_1)$ or $\deg_t(p_2)>\deg_t(q_2)$,} \\
\Omega_{\mathrm{gcd}(N_1,N_2)} &  \text{if $\deg_t(p_1)\leq \deg_t(q_1)$ and $\deg_t(p_2)\leq \deg_t(q_2)$}
\end{array}
\right.
\]
where $(\alpha_1/\beta_1, \alpha_2/\beta_2)\in \FF^2$ is the critical point of $\cP$  and, for $i\in \{1,2\}$, $N_i= \alpha_{i} q_i -\beta_{i} p_i$,
\end{definition}

\begin{corollary}
Let $\cP$ be proper and normal. For $\aao\in \Omega_{\mathrm{proper}(\cP)}\cap \Omega_{\mathrm{normal}(\cP)}$, $\cP(\aao,\gamma^0,t)$ parametrizes properly and normally $\Cu(F,\aao)$.
\end{corollary}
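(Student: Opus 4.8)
The plan is to split the statement into its two assertions and dispatch properness immediately. Since $\aao\in\Omega_{\mathrm{proper}(\cP)}$, Theorem~\ref{thrm:genus0general} already tells us that $\Cu(F,\aao)$ is a rational affine curve properly parametrized by $\cP^0:=\cP(\aao,\gamma^0,t)$. Hence the entire content of the corollary is to upgrade this to normality, i.e.\ to show that $\cP^0$ is surjective onto the affine curve $\Cu(F,\aao)$. Throughout I will use the standard criterion (see \cite{sendraNormal} and \cite[Chapter~6]{myBook}) that the only affine point a parametrization can miss is its value $\cP(\infty)$ at the parameter at infinity; equivalently, a parametrization is normal iff $\cP(\infty)$ is either not an affine point or is attained at some finite parameter value.

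The first preparatory step records the numerical data that the specialization preserves. If $\deg(F)=1$ we are in the first branch of $\Omega_{\mathrm{normal}(\cP)}$ and Prop.~\ref{prop:deg1} already yields a proper polynomial, hence surjective, specialization; so I assume $\Cu(F)$ is not a line and use Def.~\ref{def-omega}. Because $\Omega_{\mathrm{proper}(\cP)}\subset\Omega_2\cap\Omega_4$, the argument leading to~\eqref{eq-deg} in the proof of Theorem~\ref{thrm:genus0general} shows that $p_i^0/q_i^0$ stays in reduced form and that the leading coefficients of $p_1,p_2,q_1,q_2$ w.r.t.\ $t$ do not vanish at $\aao$; in particular each individual degree $\deg_t(p_i),\deg_t(q_i)$ is preserved. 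Consequently $\cP$ and $\cP^0$ land in the same branch of the definition of $\Omega_{\mathrm{normal}(\cP)}$, and the value at the parameter infinity specializes, $\cP^0(\infty)=(\cP(\infty))^0$.

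Then I would argue by cases following the definition of $\Omega_{\mathrm{normal}(\cP)}$. If $\deg_t(p_i)>\deg_t(q_i)$ for some $i$, then $\Omega_{\mathrm{normal}(\cP)}=\S$ and, by the degree preservation, $\cP^0(\infty)$ has an infinite $i$-th coordinate, hence is not affine; since the extension of $\cP^0$ to a non-constant morphism $\mathbb{P}^1\to\overline{\Cu(F,\aao)}$ is surjective, every affine point of $\Cu(F,\aao)$ is reached at a finite parameter, so $\cP^0$ is normal. If instead $\deg_t(p_i)\leq\deg_t(q_i)$ for both $i$, then $\cP(\infty)=(\alpha_1/\beta_1,\alpha_2/\beta_2)$ is the unique candidate critical point and $\cP$, being normal, attains it at a finite parameter; by the criterion this is exactly the condition that $N_1=\alpha_1 q_1-\beta_1 p_1$ and $N_2=\alpha_2 q_2-\beta_2 p_2$ have a common root, i.e.\ $\deg_t\gcd(N_1,N_2)\geq 1$. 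Since $\aao\in\Omega_{\mathrm{normal}(\cP)}=\Omega_{\mathrm{gcd}(N_1,N_2)}$, Corollary~\ref{cor-lemma-gcd} gives $\gcd(N_1^0,N_2^0)=\gcd(N_1,N_2)^0$ up to a nonzero constant, with the $t$-degree preserved, so $\deg_t\gcd(N_1^0,N_2^0)\geq 1$; moreover $\beta_i^0\neq0$ (the leading coefficients of $q_i$ survive), so $N_i^0$ is, up to a nonzero constant, the critical-point polynomial attached to $\cP^0(\infty)$. A common root $t_0$ of $N_1^0,N_2^0$ is not a pole of $\cP^0$ (coprimality of $p_i^0,q_i^0$ rules out $p_i^0(t_0)=q_i^0(t_0)=0$), whence $\cP^0(t_0)=\cP^0(\infty)$ and the candidate critical point is reached; thus $\cP^0$ is normal.

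I expect the main obstacle to be the second case, specifically the bookkeeping certifying that the data $\alpha_i,\beta_i$ defining the critical point of $\cP$ still describe the critical point of the specialized curve, namely the identities $\cP^0(\infty)=(\cP(\infty))^0$ and that $N_i^0$ is (up to scale) the critical-point polynomial of $\cP^0$. These are precisely what the non-vanishing of the leading coefficients (from $\Omega_4$) and the proper specialization of the gcd (from $\Omega_{\mathrm{gcd}(N_1,N_2)}$, via Corollary~\ref{cor-lemma-gcd}) are designed to deliver; once they are in place, the equivalence ``normal $\Leftrightarrow\deg_t\gcd(N_1,N_2)\geq1$'' transfers verbatim to $\cP^0$, and the result follows.
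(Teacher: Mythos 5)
Your proposal is correct and follows essentially the same route as the paper's proof: both reduce to the two branches of the definition of $\Omega_{\mathrm{normal}(\cP)}$, both rely on the degree preservation and reduced-form statements established in the proof of Theorem~\ref{thrm:genus0general}, and both transfer the normality criterion (critical point reached iff $\deg_t\gcd(N_1,N_2)>0$, i.e.\ \cite[Theorem 6.22]{myBook}) to the specialization via Corollary~\ref{cor-lemma-gcd}. The only difference is that you spell out the criterion and the bookkeeping for $\cP^0(\infty)$ explicitly, where the paper cites the reference directly.
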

\begin{proof} 
As in the proof of Theorem~\ref{thrm:genus0general}, we use the same criterion for simplifying the notation. Namely, if $Q$ is a polynomial, or a rational function, we will denote by $Q^0$ its specialization at $\aao$. Similarly $\Pa^0$ will denote the specialization at $\aao$ of the parametrization $\Pa$.
In the proof of Theorem~\ref{thrm:genus0general} we have seen that, for $\aao\in \Omega_{\mathrm{proper}(\cP)}$, $\deg_t(p_i )=\deg_t(p_i^0)$, similarly for $q_i$, and that the rational functions in $\cP^0$ are in reduced form.
Now, if $\deg_t(p_1)>\deg_t(q_1)$ or $\deg_t(p_2)>\deg_t(q_2)$, the result follows from Theorem \ref{thrm:genus0general} and~\cite[Theorem 6.22]{myBook}. If $\deg_t(p_1)\leq \deg_t(q_1)$ and $\deg_t(p_2)\leq \deg_t(q_2)$, since $\aao\in \Omega_2$ in Def.~\ref{def-omega}, we get that $$C:=\left(\frac{\alpha_1^0}{\beta_1^0}, \frac{\alpha_2^0}{\beta_2^0}\right)$$ is well defined and, by the above remark on the degrees, $C$ is the critical point of $\cP^0$. Now, since $\aao\in\Omega_{\mathrm{gcd}(N_1,N_2)}$, by Corollary \ref{cor-lemma-gcd}, one has that $\deg_t(\gcd( {N}_1^0,{N}_2^0))=\deg_t(\gcd(N_1,N_2))>0$; recall that $\cP$ is normal. Now, the result follows from Theorem~\ref{thrm:genus0general} and~\cite[Theorem 6.22]{myBook}.
\end{proof}

Let us illustrate these ideas in an example.

\begin{example}\label{ex-1}
Let us consider $\K=\mathbb{Q}$ and $\FF=\LL:=\mathbb{Q}(a_1,a_2)$. Let

\vspace*{1mm}

\noindent
{\Small
\begin{align*}
F(\aa,x,y) =\, &((a_1^5 + a_2^5 + 3a_1^2a_2^2 - a_1a_2)y^2 + (2a_1^3a_2 + (-9a_2 - 1)a_1^2 + 3a_1 - 6a_2^4 + a_2^3)y + a_2^2(a_1 + 9a_2 - 3))x^3 \\ &+ ((-3a_1^3a_2^2 - 6a_1^4 + 3a_1^2a_2 - 6a_1a_2^2)y^2 + 9((a_2 + 2/9)a_1^2 + (-(8a_2)/9 - 1)a_1 - a_2^3/9 + 2a_2 + 2/9)a_1y \\ &+ 3(a_1 - 2/3)a_2^2)x^2 - 3(((a_2 - 4)a_1 - 2a_2^2)a_1y + a_1^3/3 - 3a_1^2 + (6a_2 + 4/3)a_1 - (8a_2)/3)a_1xy \\ &+ ((a_1^2a_2 - 8)y - 3a_1^2 + 2a_1)a_1^2y
\end{align*}
}


\noindent $\Cu(F)$ is a rational quintic that can be properly parametrized as
 \begin{equation}\label{eq-exP} \Pa(\aa,t)= \left(\dfrac{t a_{1}+2}{t^{2} a_{2}+t +a_{1}},
\dfrac{t +3}{t^{3} a_{1}+a_{2}}\right).
\end{equation}
The field of parametrization is $\LL$. We determine the open subset $\Omega_{\mathrm{proper}(\Pa)}$ (see Def. \eqref{def-omega}). Let us deal with $\Omega_1$. Clearly $\Omega_{\mathrm{def}(F)}=\mathbb{C}^2$. The homogeneous component of $F$ of maximum degree is
\[ \left(a_{1}^{5}+a_{2}^{5}+3 a_{1}^{2} a_{2}^{2}-a_{1} a_{2}\right) x^{3} y^{2}  \]
So,
$\Omega_{1}:= \mathbb{C}^2 \setminus  \V(a_{1}^{5}+a_{2}^{5}+3 a_{1}^{2} a_{2}^{2}-a_{1} a_{2}). $
One has that
$\Omega_2:=\mathbb{C}^2\setminus   \{(0,0)\}$. 
Note that, if $\aao\not\in \Omega_2$, the second component of $\Pa$ is not well-defined.
Let us deal with $\Omega_3$. The polynomials $G_i$, $G_{i}^{*}$, $R$ are
\[ \begin{array}{lcl} G_1&=& h^{2} t a_{1} a_{2}-h \,t^{2} a_{1} a_{2}+2 h^{2} a_{2}-h a_{1}^{2}-2 t^{2} a_{2}+t a_{1}^{2}+2 h -2 t \\
G_2& =& h^{3} t a_{1}-h \,t^{3} a_{1}+3 h^{3} a_{1}-3 t^{3} a_{1}-h a_{2}+t a_{2}\\
G&= & h-t \\
G_{1}^{*}&=& \left(h t a_{1}+2 h +2 t \right) a_{2}-a_{1}^{2}+2 \\
G_{2}^{*}&=& \left(\left(t +3\right) h^{2}+\left(t^{2}+3 t \right) h +3 t^{2}\right) a_{1}-a_{2}
\\
R&= & 3 h^{4} a_{1}^{3} a_{2}^{2}-2 h^{4} a_{1}^{2} a_{2}^{2}+h^{3} a_{1}^{4} a_{2}+6 h^{3} a_{1}^{2} a_{2}^{2}+3 h^{2} a_{1}^{4} a_{2}-h^{2} a_{1}^{2} a_{2}^{3}-2 h^{3} a_{1}^{2} a_{2}-2 h^{2} a_{1}^{3} a_{2}\\
&& +h a_{1}^{5}-6 h^{2} a_{1}^{2} a_{2}+12 h^{2} a_{1} a_{2}^{2}-6 h a_{1}^{3} a_{2}-4 h a_{1} a_{2}^{3}+3 a_{1}^{5}+4 h^{2} a_{1} a_{2}-4 h a_{1}^{3}+12 h a_{1} a_{2}\\&&-12 a_{1}^{3}-4 a_{2}^{3}+4 h a_{1}+12 a_{1}
\end{array}
\]
Moreover, $A_1=-h a_{1} a_{2}-2 a_{2},A_2=-h a_{1}-3 a_{1},B=-1$ (see Def. \eqref{def-gcd}).
Hence, $\Omega_1=\mathbb{C}^2$. Moreover, $\Omega_{\mathrm{nonZ}(A_1)}=\mathbb{C}^2\setminus \V(a_2)$, $\Omega_{\mathrm{nonZ}(A_2)}=\mathbb{C}^2\setminus \V(a_1)$ and $\Omega_{\mathrm{nonZ}(B)}=\mathbb{C}^2$. 
On the other hand, $\Omega_{\mathrm{nonZ}(R)}$ can be expressed as
\[ \mathbb{C}^2 \setminus \{ (0,0), (\pm\sqrt{2},0)\}. \]
Therefore, 
$$\Omega_3=\Omega_{\mathrm{gcd}(G_1,G_2)}=\mathbb{C}^2 \cap \left( \mathbb{C}^2 \setminus  \V(a_1)\right)
\cap \left( \mathbb{C}^2 \setminus \V(a_2)\right) \cap \left( \mathbb{C}^2 \setminus \{ (0,0), (\pm\sqrt{2},0)\}\right)= \mathbb{C}^2 \setminus  \V(a_1 a_2).$$
Finally, we deal with $\Omega_4$. We have
\[ \begin{array}{lcl}
p_1=t a_{1}+2 & & p_2=t +3 \\
q_1= t^{2} a_{2}+t +a_{1} & & q_2=t^{3} a_{1}+a_{2} \\
\gcd(p_1,q_1)=1 & &\gcd(p_2,q_2) =1 \\
\res_t(p_1,q_1)= a_{1}^{3}-2 a_{1}+4 a_{2} && \res_t(p_2,q_2)=a_{2}-27 a_{1}
\end{array}
\]
Thus, $\Omega_4=\mathbb{C}^2 \setminus \V(a_1 a_2 (a_{1}^{3}-2 a_{1}+4 a_{2})(a_{2}-27 a_{1})).
$
Summarizing (see Fig. \ref{fig-1}, left)
\begin{equation}\label{eq-omegaEx1}  \Omega_{\mathrm{proper}(\Pa)}=\mathbb{C}^2\setminus \V(a_1 a_2   (a_{1}^{5}+a_{2}^{5}+3 a_{1}^{2} a_{2}^{2}-a_{1} a_{2})(a_{1}^{3}-2 a_{1}+4 a_{2})(a_{2}-27 a_{1})).\
\end{equation}
 \begin{center}
 \begin{figure}[h]
 \includegraphics[width=6cm]{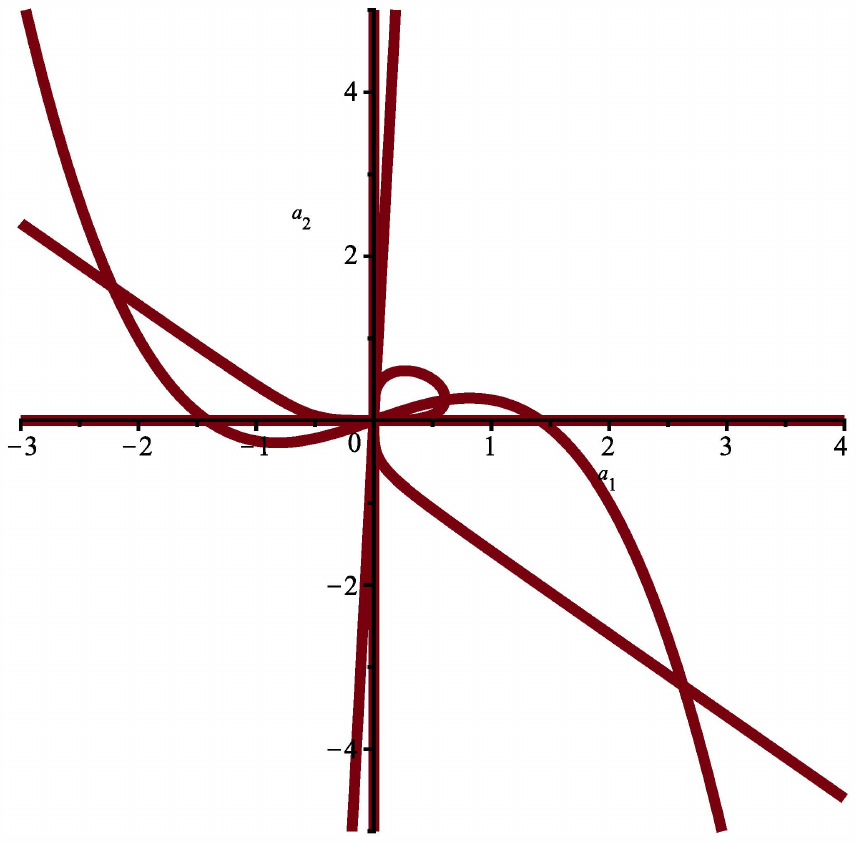}  \includegraphics[width=7cm]{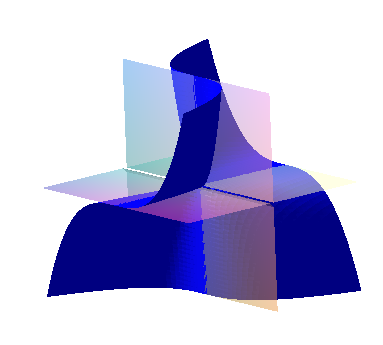} \caption{Left: Plot of the real part of the closed set defining $\Omega_{\mathrm{proper}(\Pa)}$ in Example \ref{ex-1}. Right:  Plot of the real part of the closed set defining $\Omega_{\mathrm{genus}(F)}$ in Example \ref{ex-cubic}}
 \label{fig-1}
 \end{figure}
 \end{center}
\end{example}

\section{Decomposition of $\S$}\label{sec:decomposition}

The goal in this section is to provide an algorithm decomposing the space $\S$ so that in each subset of the decomposition we can give information on the genus of the corresponding specialized curve.

Let  $F$ be as in~\eqref{eq-F}, irreducible over $\overline{\FF}$. We first compute the genus of $\Cu(F^h)$. Let $\gg:=\genus(\Cu(F^h))$. Furthermore, if $\gg=0$, let $\cP(\aa,\gamma,t)$ be, as in~\eqref{eq-Pa}, a proper parametrization of $\Cu(F^h)$. We consider the open subset
\begin{equation}\label{eq-Sigma1}
\Sigma:=\left\{ \begin{array}{cl}
 \Omega_{\mathrm{genus}(F)}  & \text{if $\gg>0$ (see Def. \eqref{def-genus})} \\
 \noalign{\vspace*{1mm}}
 \Omega_{\mathrm{proper}(\cP)} & \text{if $\gg=0$ (see Def. \eqref{def-omega})}
\end{array}\right.
\end{equation}
At this level of the process we know that (see Theorems~\eqref{thm:genusPreservation} and~\eqref{thrm:genus0general})
\begin{enumerate}
\item If $\gg>0$, then for $\aao\in \Sigma$ it holds that $\Cu(F,\aao)$ is either reducible or its genus is $\gg$.
\item If $\gg=0$, then for $\aao\in \Sigma$ it holds that $\Cu(F,\aao)$ is rational and $\cP(\aao,\gamma^0,t)$ parametrizes properly $\Cu(F,\aao)$.
\end{enumerate}
In the following, we analyze the specializations when working in the closed set
\begin{equation}\label{eq-Z}
\mathcal{Z}:=\S\setminus \Sigma.
\end{equation} First, let us discuss the computational issues that may appear.
Let $\mathcal{A}\subset \K[\aa]$ be a set of generators of $\mathcal{Z}$, and let $\mathrm{I}$ be the ideal generated by $\mathcal{A}$ in $\K[\aa]$.
We consider the prime decomposition of $\mathrm{I}$
$$\mathrm{I}=\bigcup_{j=1}^{\ell} \mathrm{I}_j.$$
Now, for each prime ideal $\mathrm{J}\in \{\mathrm{I}_1,\ldots,\mathrm{I}_\ell\}
$ we consider the quotient field of $\K[\aa]/\mathrm{J}$; we denote it by $\LL_{\mathrm{J}}$.  Elements in $\LL_{\mathrm{J}}$ are quotients of equivalence classes of $\overline{\K}[\aa]/\mathrm{J}$. We will assume that elements in $\overline{\K}[\aa]/\mathrm{J}$ are always expressed by means of a canonical representative of the class in the following sense. We fix a Gr\"obner basis   $\mathcal{G}$ of $\mathrm{J}$ w.r.t. some fixed order. Then, the elements in $\overline{\K}[\aa]/\mathrm{J}$ are uniquely represented by their normal form w.r.t. $\mathcal{G}$ (see e.g. Prop. 1 and Ex. 13, Chap. 2, Sect. 6. in \cite{cox}) and, hence, elements in $\LL_{\mathrm{J}}$ are represented as the quotient of the  canonical representatives of their numerators and denominators. So, by {abuse of notation}, we will identify, via the canonical representation, the elements in $\LL_{\mathrm{J}}$ with elements in $\overline{\K}(\aa)$. In addition, we consider an algebraic element $\gamma_\JJ$ over $\LL_{\JJ}$ and we denote by $\FF_\JJ$ the field $\FF_{\JJ}:=\LL_{\JJ}(\gamma_{\JJ})$.

We observe that $\FF_{\mathrm{J}}$ is a computable field with a polynomial factorization algorithm available; zero test and basic arithmetic (addition, multiplication and inverse computation) can be carried out e.g. by taking the normal forms w.r.t. a Gr\"obner basis of $\mathrm{J}$. For the polynomial factorization we refer to (see Section 10.2 and Appendix B in \cite{wang2}, see also \cite{wang1}). As a particular case, as in Example \ref{ex-1} and \ref{ex-2}, if $\V(\mathrm{J})$ is a rational variety, one may work over $\K(\mathcal{Q}(\lambda_1,\ldots,\lambda_m))$ instead of $\FF_{\mathrm{J}}$, where $\mathcal{Q}(\lambda_1,\ldots,\lambda_m)$ is a parametrization of $\V(\mathrm{J})$.

Concerning specializations, instead of working in $\S$ (see~\eqref{eq-S}), we take the parameter values in the irreducible variety $\V(\mathrm{J})$. Then, for $\aao\in \V(\mathrm{J})\subset \S$, and $f\in\overline{\K}[\aa]/\mathrm{J}$, we denote by $f(\aao)$ the specialization at $\aao$ of the equivalence class of $f$; note that since $\aao\in \V(\mathrm{J})$ the specialization does not depend on the representative. Similarly, if $f:=p/q\in\FF_{\mathrm{J}}$ and $q(\aao)\neq 0$, then $f(\aao):=p(\aao)/q(\aao)\in \overline{\K}$.

In this situation, for each prime ideal $\mathrm{J}\in \{\mathrm{I}_1,\ldots,\mathrm{I}_\ell\}$ we consider the polynomial $F$ in \eqref{eq-F} as a polynomial in $\FF_{\mathrm{J}}[x,y]$. To emphasize this fact, we write $F_\JJ$. First we check the irreducibility of $F_\JJ$ over the algebraic closure of $\FF_\JJ$. If $F_\JJ$ is reducible we can either stop the decomposition over this closed subset, and claim that the specialization over $\V(\JJ)$ is reducible, or continue the process with each irreducible factor of $F_\JJ$. For irreducible $F_\JJ$, the process continues, as in the initial step, by computing the genus of $\Cu(F_\JJ)$. Since in each iteration of the process the dimension of the variety $\V(\JJ)$ decreases, we, at the end, reach the zero--dimensional case, and the decomposition ends.

Let us say that a specialization \textit{degenerates} if either $F(\aao,\gamma^0,x,y)$ is not well--defined or $F(\aao,\gamma^0,x,y)\in \overline{\K}$.
As a result of the process described above, we find a disjoint decomposition
\begin{equation}\label{eq-decomp}
\S=\dot{\bigcup}_{i\in I} \S_i
\end{equation}
such that, for every specialization $\aao\in \S_i$, one of the following holds
\begin{enumerate}
\item the specialization degenerates;
\item the genus is positive and preserved, or the specialized curve is reducible;
\item the genus is zero and a proper parametrization of $\Cu(F,\aao)$ is provided.
\end{enumerate}

\begin{remark}\label{rem:decomposition}
Let us remark that in the decomposition~\eqref{eq-decomp} we can take the union of those $\S_i$ corresponding to each of the three items above; say $\S_1, \S_2, \S_3$ representing the corresponding item. 
In this way, we can achieve a unique decomposition of the parameter space $\S$.
The $\S_i$ obtained in this way are constructible sets of $\S$, and $\S_2,\S_3$ are a finite union of subsets $\Sigma$ as in~\eqref{eq-Sigma1} and $\S_1$ is a closed subset directly defined from the implicit equation $F$.

Moreover, $\S_3$ can further be decomposed into a finite union of subsets $\S_{3,j}$ such that for every $j$, there is a proper parametrization $\Pa_j$ which is well--defined for every $\aao \in \S_{3,j}$ and specialized properly.

Finally, since $\Sigma$ of $F$ as in~\eqref{eq-Sigma1} is open and non-empty, depending on the genus of $F$, either $\S_2$ or $\S_3$ is a dense subset of $\S$.
\end{remark}

We illustrate the previous ideas by continuing the analysis of Example \ref{ex-1}.

\begin{example}\label{ex-2} (Continuation of Example~\ref{ex-1})
Taking into account \eqref{eq-omegaEx1}, the closed set $\mathcal{Z}$ (see \eqref{eq-Z}) decomposes as
\[ \mathcal{Z}= \V(a_1) \cup \V(a_2) \cup \V(a_{2}-27 a_{1}) \cup \V(a_{1}^{3}-2 a_{1}+4 a_{2})\cup \V(a_{1}^{5}+a_{2}^{5}+3 a_{1}^{2} a_{2}^{2}-a_{1} a_{2})\subset \overline{\mathbb{Q}}^2.
\]
We start with $\mathrm{J}_1:= <\!\!a_1\!\!>$ and $\V_1:=\V(\JJ_1)$. Since $\V_1$ is rational, surjectively parametrized by $\Qa_1:=(0,\lambda)$, we work over the field $\mathbb{Q}(\lambda)[x,y]$. We have that
\[ F_{\mathrm{J}_1}:=\lambda^{2} x^{2} \left(\lambda^{3} x \,y^{2}-6 \lambda^{2} x y +\lambda x y +9 \lambda x -3 x -2\right)
\]
and therefore all specializations in $\V(a_1)$ lead to a reducible curve. Additionally, one may distinguish the cases $\lambda=0$, that corresponds to the point $(0,0)$, where the specialization degenerates, and $\lambda\neq 0$ where $\Cu(F,\aao)$ decomposes to the union of a double line and a rational cubic.

The analysis for $\mathrm{J}_2:=<\!\!a_2\!\!>$, and $\V_2:=\V(\JJ_2)$ looks similar. Since $\V_2$ is rational, parametrized by $\Qa_2:=(\lambda,0)$, we work over the field
$\mathbb{Q}(\lambda)[x,y]$. We have that

\vspace*{1mm}

\noindent
{\small
\begin{align*}
F_{\mathrm{J}_2} := \, &\lambda y (\lambda^{4} x^{3} y -6 \lambda^{3} x^{2} y -\lambda^{3} x +2 \lambda^{2} x^{2}+12 \lambda^{2} x y -\lambda \,x^{3}-3 \lambda^{3}+9 \lambda^{2} x -9 \lambda \,x^{2}+3 x^{3}+2 \lambda^{2}-4 \lambda x -8 \lambda y +2 x^{2}).
\end{align*}
}


\noindent Thus, all specializations in $\V_2$ lead to a reducible curve; note that $\mathcal{Q}_2$ is surjective. The case $\lambda=0$ is covered above, and for $\lambda\neq 0$, the specialization $\Cu(F,\aao)$ decomposes to the union of a line and a rational quartic.

Let us study $\mathrm{J}_3:=<\!\! a_{2}-27 a_{1} \!\!>$ and $\V_3:=\V(\JJ_3)$. Again, $\V_3$ is rational, parametrized by $\Qa_3:=(\lambda,27\lambda)$, and we work over the field
$\mathbb{Q}(\lambda)[x,y]$. We have that

\vspace*{1mm}

\noindent
{\small
\begin{align*}
F_{\mathrm{J}_3} := \, &\lambda (244 \lambda  x +3 \lambda -3 x -2 ) (58807 \lambda^{3} x^{2} y^{2}-732 \lambda^{3} x \,y^{2}+732 \lambda^{2} x^{2} y^{2}+9 \lambda^{3} y^{2}-13068 \lambda^{2} x^{2} y \\ &+464 \lambda^{2} x \,y^{2}+9 \lambda  \,x^{2} y^{2}+81 \lambda^{2} x y +6 \lambda^{2} y^{2}-81 \lambda  \,x^{2} y -6 \lambda x \,y^{2}-\lambda^{2} y +729 \lambda  \,x^{2}-106 \lambda  x y +4 \lambda  \,y^{2}-x^{2} y ).
\end{align*}
}


\noindent The analysis of $\V_3$ is identical to $\V_2$.

Let us study $\mathrm{J}_4:=<\!\! a_{1}^{3}-2 a_{1}+4 a_{2} \!\!>$ and $\V_4:=\V(\mathrm{J_4})$ that is again rational and it is properly and surjectively parametrized by $\Qa_4:=(\lambda, -\frac{1}{4} \lambda^{3}+\frac{1}{2} \lambda)$. We work over the field $\mathbb{Q}(\lambda)[x,y]$. We have that

\vspace*{1mm}

\noindent
{\small
\begin{align*}
F_{\mathrm{J}_4} := \, & \lambda  (\lambda^{5} y -2 \lambda^{3} y +12 \lambda^{2}-8 \lambda +32 y) (\lambda^{9} x^{3} y -8 \lambda^{7} x^{3} y +12 \lambda^{6} x^{3}+24 \lambda^{5} x^{3} y +8 \lambda^{5} x^{3}-32 \lambda^{4} x^{3} y \\ &-72 \lambda^{4} x^{3}-32 \lambda^{3} x^{3} y -16 \lambda^{4} x^{2}-32 \lambda^{3} x^{3}+192 \lambda^{3} x^{2} y +144 \lambda^{2} x^{3}+16 \lambda  \,x^{3} y +64 \lambda^{2} x^{2}-384 \lambda^{2} x y \\ &+32 \lambda  \,x^{3}-96 x^{3}+256 \lambda  y -64 x^{2} ).
\end{align*}
}


\noindent Again $\V_4$ behaves as $\V_2$ and $\V_3$.
Finally, let us analyze $\mathrm{J}_5:=<\!\! a_{1}^{5}+a_{2}^{5}+3 a_{1}^{2} a_{2}^{2}-a_{1} a_{2}\!\!>$ and $\V_5:=\V(\JJ_5)$ which is a rational quintic and properly and surjectively parametrized as
{\small
\[ \Qa_5(\lambda):=\left({\frac { \left( 5\,\lambda-1 \right)  \left( 5\,\lambda-2 \right) ^{4
}}{3125\,{\lambda}^{4}-3750\,{\lambda}^{3}+1750\,{\lambda}^{2}-375\,
\lambda+31}},-{\frac { \left( 5\,\lambda-2 \right)  \left( 5\,\lambda-
1 \right) ^{4}}{3125\,{\lambda}^{4}-3750\,{\lambda}^{3}+1750\,{\lambda
}^{2}-375\,\lambda+31}}
\right). \]
}
Those values for which the parametrization is not defined, i.e. the poles, play no role in this analysis.
The polynomial $F_{\JJ_5}$ is $F_{\JJ_5}(\lambda,x,y)=F(\Qa_5(\lambda),x,y)\in \mathbb{Q}(\lambda)[x,y]$.
It holds that $\deg(\Cu(F_{\JJ_5}))=4$, $\genus(\Cu(F_{\JJ_5}))=0$ and a proper surjective parametrization is $\cP_{\JJ_5}(\lambda,t):=\cP(\Qa(\lambda),t)$ (see \eqref{eq-exP}). So, we get (see Def. \ref{def-omega})
\[ \Omega_{\mathrm{proper}(\cP_{\JJ_5})}:=\V_5 \setminus \{ \cP_{\JJ_5}(\lambda_0)\,|\, f(\lambda_0)=0\} \]
where $f:=p_1\, p_2\,p_3\, p_4$ and
\[ \begin{array}{ll}p_1:=&  5\,\lambda-1,\,\,\,
p_2:=   5\,
\lambda-2,\,\,\,
p_3:=   25\,{\lambda}^{2}-15\,\lambda+3,
\\
p_4:= & \text{\small{$
 390625\,{\lambda}^{8}-937500\,{\lambda}^{7}+1343750\,{\lambda}
^{6}-1237500\,{\lambda}^{5}+711875\,{\lambda}^{4}-253500\,{\lambda}^{3
}$}}\\ &\text{\small{$+54475\,{\lambda}^{2}-6495\,\lambda+331.$}} \end{array}
 \]
By Theorem \ref{thrm:genus0general}, for every $\aao\in \Omega_{\mathrm{proper}(\cP_{\JJ_5})}$ it holds that $\Cu(F,\aao)$ is rationally parametrized by $\cP(\aao,t)$ (see \eqref{eq-exP}) or, equivalently, by $\cP_{\JJ_5}(\Qa^{-1}(\aao),t)$. Now, let us analyze the curve in $\mathcal{Z}_5:= \V_5\setminus \Omega_{\mathrm{proper}(\cP_{\JJ_5})}=\{ \cP_{\JJ_5}(\lambda_0)\,|\, f(\lambda_0)=0\}$ (see \eqref{eq-Z}). 
We define $\aao_1:=(0,0)=\cP_{\JJ_5}(\lambda_0)$, where $\lambda_{0}$ is a root of $p_1 \,p_2$; $\aao_2:=(-1,-1)=\cP_{\JJ_5}(\lambda_0)$, where $\lambda_{0}$ is a root of $p_3$; and observe that $p_4$ generates 8 points on the curve that we denote by $\aao_{i}, i\in \{3,\ldots,10\}$ and which correspond to $\cP_{\JJ_5}(\lambda_0)$ where $\lambda_{0}$ is one of the roots of $p_4$. Thus, $\mathcal{Z}=\{\aao_1,\ldots,\aao_{10}\}$, $\Cu(F,\aao_1)=\mathbb{C}^2$, and, for $i\in \{2,\ldots,10\}$, $\Cu(F,\aao_i)$ are rational cubics parametrized by $\cP_{\JJ_5}(\aao_i,t)$.

Summarizing, $\S$ decomposes as (see \eqref{eq-decomp} and Remark~\ref{rem:decomposition})
\begin{align*}
\S = &\left(\S_1:=\{(0,0)\}\right) \cup \left(\S_2:=\cup_{i=1}^{4}\V_i \setminus \{(0,0)\} \right) \\ & \cup \left(\S_{3,1}:=\Omega_{\mathrm{proper}(\cP)}\right) \cup \left(\S_{3,2}:=\Omega_{\mathrm{proper}(\cP_{\JJ_5})}\right) \cup \left(\S_{3,3}:=\{\aao_2,\ldots,\aao_{10}\}\right).
\end{align*}
For $\aao \in \S_1$, $\Cu(F,\aao)$ degenerates. 
For $\aao \in \S_2$, $\Cu(F,\aao)$ is reducible (note that $\aao_1\in \S_2$). 
For $\aao\in \S_{3,1}$, the specialized curve $\Cu(F,\aao)$ is a quintic parametrized by $\Pa(\aao,t)$; for $\aao \in \S_{3,2}$, $\Cu(F,\aao)$ is a quartic parametrized by $\Pa_{\mathrm{J}_5}(\aao,t)$; and for $\aao \in \S_{3,3}$, $\Cu(F,\aao)$ is a cubic parametrized by $\Pa_{\mathrm{J}_5}(\aao,t)$.
\end{example}

\begin{example}\label{ex-cubic}
Let $\K=\Q$ and $\FF=\LL=\Q(a_1,a_2,a_3)$. We consider
\[ F={x}^{3}+{x}^{2}a_{{1}}+{y}^{3}+a_{{2}}a_{{3}}\in \FF[x,y]. \]
One has that $\genus(\Cu(F))=1$. Using the ideas in Section~\ref{sec-genus}, we compute $\Omega_{\mathrm{genus}(F)}$ (see Def.~\ref{def-genus}). We observe that since $\Cu(F)$ is an elliptic cubic, no blowup is required and, hence, $\Omega_{\mathrm{genus}(F)}=
\Omega_{\mathrm{genusOrd}(F)}$. Indeed, one gets that (see Fig. \ref{fig-1}, right)
\[ \Omega_{\mathrm{genus}(F)}:=\C^3 \setminus \V\left(-a_{{2}}a_{{3}} \left( -4\,{a_{{1}}}^{3}-27\,a_{{2}}a_{{3}} \right)\right).\]
Thus, by Theorem \ref{theorem:genus-strong}, for every $\aao\in \Omega_{\mathrm{genus}(F)}$, $\Cu(F,\aao)$ is either reducible or it is a genus 1 cubic curve. Let us analyze the specializations in $\mathcal{Z}:=\C^3\setminus  \Omega_{\mathrm{genus}(F)}=\V\left(-a_{{2}}a_{{3}} \left( -4\,{a_{{1}}}^{3}-27\,a_{{2}}a_{{3}} \right)\right)$.

Let $\JJ_1:=<\!\!a_2\!\!>$ and $\V_1:=\V(\JJ_1)$. Then, $F_{\JJ_1}:={x}^{3}+a_{{1}}{x}^{2}+{y}^{3}$. We observe that $\genus(\Cu(F_{\JJ_1}))=0$ and
\[\cP_{\JJ_1}:=\left(-{\frac {{t}^{3}a_{{1}}}{{t}^{3}+1}},-{\frac {{t}^{2}a_{{1}}}{{t}^{3}
+1}}
\right) \]
is a proper parametrization of $\Cu(F_{\JJ_1})$. Applying Def. \ref{def-omega} to $F_{\JJ_1}$ and $\cP_{\JJ_1}$ we get that $\Omega_{\mathrm{proper}(\cP_{\JJ_1})}:=\V_1\setminus \V(a_1)$. Therefore, by Theorem \ref{thrm:genus0general}, for all $\aao\in \Omega_{\mathrm{proper}(\cP_{\JJ_1})}$ it holds that $\Cu(F,\aao)$ is a rational curve parametrized by $\cP_{\JJ_1}(\aao,t)$. However, for the remaining case, namely the points $(0,0,\mu)$ for $\mu\in\C$,  $\Cu(F,(0,0,\mu))$ decomposes as the product of three lines.

Let $\JJ_2:=<\!\!a_3\!\!>$ and $\V_2:=\V(\JJ_2)$. Now, the situation is identical to the previous case. Let $\JJ_3:=<\!\!-4\,{a_{{1}}}^{3}-27\,a_{{2}}a_{{3}}\!\!>$ and $\V_3:=\V(\JJ_3)$. The surface $\V_3$ can be properly parametrized as
\[ \Qa(\lambda_1,\lambda_2)=\left(\lambda_{{1}},\lambda_{{2}},-{\frac {4\,{\lambda_{{1}}}^{3}}{27\,\lambda_{{2}}}}\right). \]
We observe that $\Qa$ is not surjective. Indeed, $\Qa(\C^2)=\V_3\setminus \{(0,0,\mu)\,|\, \mu\in \C\}$ (see e.g. Remark 3 in \cite{SSV}). 
However, the specializations in $\{(0,0,\mu)\,|\, \mu\in \C\}$ have already been analyzed. 
So, we treat the case $\Qa(\C^2)$. Then, $F_{\JJ_3}$ can be taken as 
$$F_{\JJ_3}:=F(\Qa(\lambda_1,\lambda_2),t)= {x}^{3}+{x}^{2}\lambda_{{1}}+{y}^{3}-{\frac {4\,{\lambda_{{1}}}^{3}}{27}},$$ where $(\lambda_1,\lambda_2)\in \C^2 \setminus \{(0,0)\}$. It holds that $\genus(\Cu(F_{\JJ_3}))=0$ and 
\[ \cP_{\JJ_3}:=\left({\frac {{t}^{3}\lambda_{{1}}-2\,\lambda_{{1}}}{3\,{t}^{3}+3}},{\frac
{{t}^{2}\lambda_{{1}}}{{t}^{3}+1}}
\right) \]
is a proper parametrization. 
Applying Def. \ref{def-omega} to $F_{\JJ_3}$ and $\cP_{\JJ_3}$, we get that $\Omega_{\mathrm{proper}(\cP_{\JJ_3})}=\V_3\setminus \{(0,0,\mu)\,|\, \mu\in \C\}$. Therefore, by Theorem \ref{thrm:genus0general}, for all $\aao\in \Omega_{\mathrm{proper}(\cP_{\JJ_3})}$ it holds that $\Cu(F,\aao)$ is a rational curve parametrized by $\cP_{\JJ_3}(\aao,t)$.

Summarizing, $\S$ decomposes as (see \eqref{eq-decomp})
\[ \S=\left(\S_2:=\Omega_{\mathrm{genus}(F)} \cup \{(0,0,\mu)\,|\, \mu\in \C\}\right) \cup \left(\S_{3}:=\Omega_{\mathrm{proper}(\cP_{\JJ_1})}\right). \]
For $\aao\in \S_2, \Cu(F,\aao)$ is either reducible or an elliptic curve; and for $\aao\in \S_3, \Cu(F,\aao)$ is a rational cubic parametrized by $\cP_{\JJ_3}(\aao,t)$.
\end{example}

\section{Some Illustrating Applications}\label{sec-application}
In this section, we illustrate by examples some possible applications of the theory developed in the paper.
In the first example, given a surface, we consider the problem of determining its rational level curves, if any.

\begin{example}\label{ex-app1} \textsf{Level curves of a surface.} \\
Let $\mathcal{S}$ be the surface defined over $\C$ by the polynomial
\[ F=x^{6}-5 x^{4} y +3 x^{4} z -y^{5}+2 y^{4} z -y^{3} z^{2}-x^{3} z +5 x^{2} y^{2}-7 x^{2} y z +3 x^{2} z^{2}+y^{2} z -2 y z^{2}+z^{3}-x^{2} \in \Q(z)[x,y]. \]
So, with the terminology of the paper, $\aa=z$, $\K=\Q$ and $\LL=\FF=\Q(z)$. With this interpretation, the idea is to analyze the genus of $\Cu(F)$ under specializations in $\S:=\C$. For this purpose, we first compute $\Stand(F^h)$ as in~\eqref{eq-SingularLocusF}. Following the steps  in Subsection~\ref{subsec-genus}, we get
\[ \Stand(F^h)=\left\{\left(0:z: 1\right) \right\}_{m_1=t}. \] 
The family consists in one ordinary double point. Therefore,  $\genus(\Cu(F))=9$. Since the singularities are all ordinary, we get that (see Def. \ref{def-genus}) $\Omega_{\mathrm{genus}(F)}=\Omega_{\mathrm{genusOrd}(F)}.$
Moreover (see Def. \ref{def:genusOrd}),
\[ \begin{array}{ll} \Omega_{\mathrm{genusOrd}(F)}= &\C\setminus \left\{ -1, 0, 1\right\}. \end{array}\]
Using Theorem \ref{thm:genusPreservation}, for $z_0\in \Omega_{\mathrm{genusOrd}(F)}$, $\Cu(F,(x,y,z_0))$ is either reducible or its genus is 9. In any case, no rational level curve appears. For the elements in $\mathcal{Z}:=\C \setminus \Omega_{\mathrm{genusOrd}(F)}$, we get that $\Cu(F,(x,y,\pm 1))$ are irreducible of genus 7 and $\Cu(F,(x,y,0))$ is irreducible of genus 0. Indeed, $\Cu(F,(x,y,0))$ can be parametrized by $(t^{5}, t^{6}-t^{2})$.
\end{example}

In the second example, we consider the linear homotopy deformation of two curves and we analyze the genus of each instance curve.

\begin{example}\label{ex-app2} \textsf{Linear homotopy deformation of curves.} \\
Let us consider the linear homotopy between the Fermat cubic curve and the unit circle. That is, we consider the polynomial
\[ F(\lambda,x,y)=\left(1-\lambda \right) \left(x^{3}+y^{3}-1\right)+\lambda  \left(x^{2}+y^{2}-1\right)\in \Q(\lambda)[x,y]. \]

With the terminology of the paper, $\aa=\lambda$, $\K=\Q$ and $\LL=\FF=\Q(\lambda)$. Now, we  study the genus of $\Cu(F)$ under specializations in $\S:=\C$; or, in particular, in the real interval $[0,1]$. For this purpose, we first observe that $\genus(\Cu(F))=1$ and hence (see Def. \ref{def-genus}), $\Omega_{\mathrm{genus}(F)}=\Omega_{\mathrm{genusOrd}(F)}.$
We get that $\Omega_{\mathrm{genusOrd}(F)}=\C\setminus \V(g)$  
where 

\noindent {\small
\begin{align*}
g(\lambda) = &-\left(2 \lambda^{3}-5 \lambda^{2}+7 \lambda -3\right) \left(\lambda^{6}-4 \lambda^{5}+15 \lambda^{4}-29 \lambda^{3}+43 \lambda^{2}-33 \lambda +9\right) \left(4 \lambda -3\right) \left(-1+\lambda \right) \left(\lambda -3\right) \\ & \left(2 \lambda^{9}+27 \lambda^{8}+5049 \lambda^{7}-40068 \lambda^{6}+148716 \lambda^{5}-315657 \lambda^{4}+398763 \lambda^{3}-295245 \lambda^{2}+118098 \lambda -19683\right) \\ &\left(8 \lambda^{3}-27 \lambda^{2}+54 \lambda -27\right).
\end{align*}
} 


Using Theorem \ref{thm:genusPreservation}, for $\lambda_0\in \Omega_{\mathrm{genusOrd}(F)}$, $\Cu(F,(\lambda_0,x,y,\lambda))$ is either reducible or its genus is 1. In any case, no rational deformation instance appears. For the elements in $\mathcal{Z}=\C\setminus \Omega_{\mathrm{genusOrd}(F)}$, we get that $\Cu(F,(1,x,y))$ is rational, the specialized cubics $\Cu(F,(3/4,x,y))$ and $\Cu(F,(3,x,y))$ factor as a union of a line and a conic, and for all the other cases the genus remains one (see Fig~\ref{fig-2}).
\begin{center}
	\begin{figure}[h]
		\includegraphics[width=5cm]{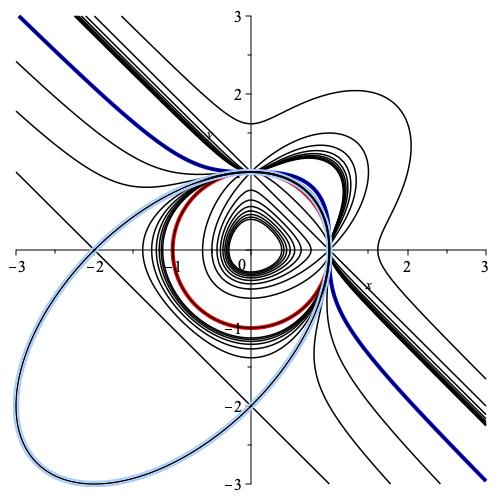}  \includegraphics[width=5cm]{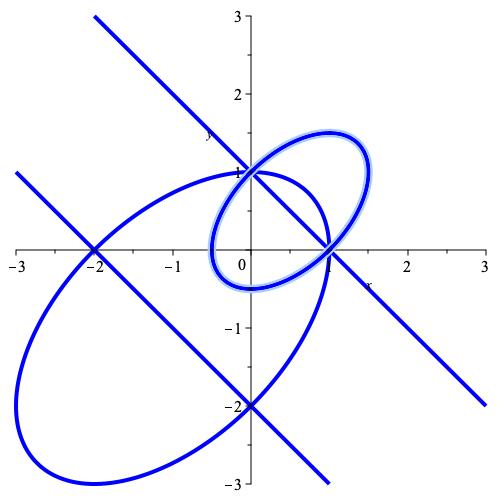} \caption{Left: Plot of the real part of different instances of the deformation in Example~\ref{ex-app2}. Right: Plot of the real part of $\Cu(F(3/4,x,y))$ and $\Cu(F,(3,x,y))$ in Example~\ref{ex-app2}.}
		\label{fig-2}
	\end{figure}
\end{center}
\end{example}

Let $\mathcal{O}$ be a connected open subset of $\C$ and let $\mathrm{Mer}(\mathcal{O})$ be the field of meromorphic functions in $\mathcal{O}$ (see \cite{meromorfas}). 
We consider a polynomial equation of the form
\begin{equation}\label{eq-fun}
\sum_{i,j\in {I}} f_{i,j}(t) x^i y^j=0
\end{equation}
where $I$ is a finite subset of $\mathbb{N}^2$, and where $f_{i,j}\in \mathrm{Mer}(\mathcal{O})$. Let $\mathbf{f}$ be the tuple with all the functions $f_{i,j}$ appearing in \eqref{eq-fun}. The question now is to decide, and indeed compute, whether there exists rational solutions of the equation; that is $p,q\in \overline{\C(\mathbf{f})}$ such that $\sum_{i,j\in {I}} f_{i,j}(t) p^i q^j=0$.

We may proceed as follows. We consider the polynomial $F(\aa,x,y)$ resulting from the formal replacement in \eqref{eq-fun} of each function $f_{i,j}$ by a parameter $a_{k}$. Now, with the terminology of the paper, we take $\K=\C(\mathbf{f})$ and $\LL=\FF=\K(\aa)$.

Then, we decompose $\S$ (see \eqref{eq-S}) as described in Section \ref{sec:decomposition}; note that the computations can be carried out over $\C(\aa)$ instead of over $\FF$. Then, if any subset in the decomposition has genus zero, and the functions $\mathbf{f}$ belong to it, we obtain a (family of) rational solutions. Let us see a particular example.
\begin{example}\label{ex-app3} \textsf{Rational solution of functional algebraic equations.} \\
We consider the functional algebraic equation
\begin{equation}\label{eq-funEx}
  x^{2} y^{3} f_{1}^{4}-x^{2} y^{2} f_{2}^{4} f_{3}-2 x \,y^{3} f_{1}^{2} f_{2}+2 x y f_{1} f_{2}^{2} f_{3}^{2}+y^{3} f_{2}^{2}-f_{1}^{2} f_{3}^{3}
=0, \end{equation}
where $\mathbf{f}=(f_1,f_2,f_3):=(\sin(t),\cos(t),\mathrm{e}^{t})$. We associate to \eqref{eq-funEx} the curve $\Cu(F)$ where
\[ F=x^{2} y^{3} a_{1}^{4}-x^{2} y^{2} a_{2}^{4} a_{3}-2 x \,y^{3} a_{1}^{2} a_{2}+2 x y a_{1} a_{2}^{2} a_{3}^{2}+y^{3} a_{2}^{2}-a_{1}^{2} a_{3}^{3}. \]
It holds that $\genus(\Cu(F))=0$ and that a proper parametrization is 
\begin{equation}\label{eq-P-ex}
 \cP(\aa,W)=\left(\frac{W^{3} a_{1}+a_{2}}{W a_{2}^{2}+a_{1}^{2}}, 
\frac{a_{3}}{W^{2}}\right). 
\end{equation}
The open subset in Def. \ref{def-omega} turns to be
\[ \Omega_{\mathrm{proper}(\cP)}= \overline{\C(\mathbf{f})}^3\setminus \V_{\overline{\C(\mathbf{f})}}\left(a_{1} a_{2}  \left(a_{1}-a_{2}\right) \left(a_{1}^{6}+a_{1}^{5} a_{2}+a_{1}^{4} a_{2}^{2}+a_{1}^{3} a_{2}^{3}+a_{1}^{2} a_{2}^{4}+a_{1} a_{2}^{5}+a_{2}^{6}\right) a_{3} \right). \]
Since $\mathbf{f}\in \Omega_{\mathrm{proper}(\cP)}$, by Theorem \ref{thrm:genus0general}, we have that
\begin{equation}\label{eq-solApp} \left\{ x=\frac{W^{3} \sin \! \left(t \right)+\cos \! \left(t \right)}{W \left(\cos^{2}\left(t \right)\right)+\sin^{2}\left(t \right)}
, y=\frac{{\mathrm e}^{t}}{W^{2}}
\right\},
\end{equation}
for every $W\in \overline{\C(\mathbf{f})}$ such that~\eqref{eq-solApp} is well--defined, is a rational solution of~\eqref{eq-funEx}. In fact, the last statement holds more generally for every $W\in \overline{\mathrm{Mer}(\C)}$. 
\end{example}
\noindent

\appendix

\section{Geometric Genus and Rational Parametrizations}\label{App}

In this appendix we recall the main steps to compute the genus of an irreducible plane curve and, in the affirmative case, how to compute a proper parametrization. There exist different methods to that goal: the adjoint curve based method (see e.g.~\cite{walker}, \cite{mauro} and~\cite{myBook}), the method based on the anticanonical divisor (see~\cite{hoeij}) or the method based on Puiseux expansions (see~\cite{PW}), among others. In this paper we will follow the adjoint curve based method (see~\cite{walker}); more precisely, we will follow the symbolic approaches in Chapters 3, 4, 5 in~\cite{myBook}. A fundamental tool for that approach is the manipulation of conjugate families of points. This is why we include a subsection on the topic. A similar treatment of the problem can be performed using the other algorithmic approaches.  Throughout this appendix, let $G\in K[x,y]\setminus K$ be irreducible over $\overline{K}$; recall that $K$ is a field extension of $\K$ (see notation at the introduction of the paper).

\subsection{Conjugate families of points}\label{subsec-families}
The key tool for our symbolic computation of the genus is the notion of conjugate family of points (see Definition 3.15 in~\cite{myBook}). We adapt the definition to our purposes. We present the definitions taking $K$, the ground field of $\Cu(G)$, as the reference field. Nevertheless, one may take any field extension of $K$ as field of reference.

\begin{definition}\label{def-family}
The set $\mathcal{F}=\{ (f_1(\alpha):f_2(\alpha): f_3(\alpha))\,|\, m(\alpha)=0 \}\subset \mathbb{P}^2(\overline{K})$ is called a
\textit{$K$--conjugate family of points}, or for short \textit{$K$-family of points}, if
\begin{enumerate}
\item $f_1,f_2,f_3,m\in K[t]$ and $gcd(f_1,f_2,f_3,m)=1$,
\item at least one of the polynomials $f_i$ is not zero,
\item  $\deg_t(m)>0$ and $m$ is squarefree.
\end{enumerate}
The family is represented as $$\mathcal{F}:=\{ (f_1:f_2: f_3)\}_{m}$$ and $m(t)$ is called the \textit{defining polynomial of $\mathcal{F}$}.  If $m(t)$ is irreducible over $K$, we say that $\mathcal{F}$ is \textit{irreducible}. We say that $\mathcal{F}:=\{ (f_1:f_2: f_3)\}_{m}$ is in \textit{canonical form} if $f_1,f_2,f_3$ are reduced modulo $m(t)$. In this paper, we will always assume that families are irreducible and expressed in canonical form.
\end{definition}

\begin{remark}\label{rem-fam}\
\begin{enumerate}
\item A single point $(a:b:c)\in \mathbb{P}^2(K)$ can be seen as the  $K$-family   $\{(a:b:c)\}_{t}.$
\item Factoring the defining polynomial $m(t)$ over $K$, every family decomposes as a finite union of irreducible families.  
\item For $H\in K[x,y,z]$ and $\mathcal{F}:=\{ (f_1 :f_2 : f_3 )\}_{m )}$ irreducible, $H(f_1(t),f_2(t),f_3(t))\neq 0$ mod $m(t)$ iff $H$ does not vanish at any point in the family $\mathcal{F}$.
%
%
\end{enumerate}
\end{remark}

\begin{definition}\label{def-family-curve}
We say that the irreducible $K$--family $\mathcal{F}:=\{ (f_1:f_2: f_3)\}_{m}$ is a \textit{family of points of $\Cu(G^h)$} if $G^h(f_1(t),f_2(t),f_3(t))=0$ modulo $m(t)$.  
\end{definition}

\begin{definition}\label{def-curve-fam}
Let $\fam=\{(f_1:f_2:f_3)\}_{m}$ be an irreducible $K$-family of $\Cu(G^h)$. Let $K_m$ be the quotient field of the integral domain $K[t]/\!<\!m\!>$. We defined the \textit{curve associated to $\fam$}, and we denote it by  $\CuFam(G^{h})$, as the curve defined by $G^h$ over $\overline{K_m}$. Note that $(f_1:f_2:f_3)$ is a point on $\CuFam(G^h)$. By abuse of notation, we will say that $\mathcal{F}$ is a point of $\CuFam(G^{h})$ meaning $(f_1:f_2:f_3)\in \CuFam(G^{h})$.
\end{definition}

Taking into account Remark~\ref{rem-fam} (3), one deduces that all points of an irreducible family have the same multiplicity as points of the curve. This motivates the next concept.

\begin{definition}\label{def-family-sing}
Let $\mathcal{F}:=\{ (f_1:f_2: f_3)\}_{m}$ be an irreducible $K$--family of points of $\Cu(G^h)$. We say that $\fam$ is a \textit{family of $r$-fold points} of $\Cu(G^h)$ if all points of $\fam$ are $r$-fold points of $\Cu(G^h)$. We denote by $\mathrm{mult}(\mathcal{F})$ the multiplicity of the points in $\mathcal{F}$, as points of $\Cu(G^h)$.
\end{definition}

\begin{remark}\label{re-mult-fam}
By Remark~\ref{rem-fam} (3), the multiplicity of an irreducible family $\fam$ can be computed by checking which is the first partial derivative of $G^h$ not vanishing, modulo $m(t)$, at the family. Alternatively, one can determine the multiplicity of  $\fam$ at a point of the curve associated to $\fam$ (see Definition \ref{def-curve-fam}).
\end{remark}

 
\begin{definition}\label{def-family-ord}
Let $\mathcal{F}:=\{ (f_1:f_2: f_3)\}_{m}$ be an irreducible $K$--family of $r$-fold points of $\Cu(G^h)$. We say that $\fam$ is \textit{ordinary} if all points of  $\fam$ are ordinary singularities in $\Cu(G^h)$. We say that $\fam$ is  \textit{non-ordinary} it is not an ordinary family.
\end{definition}
 
\begin{remark}\label{re-mult-ord}
The character of the irreducible family $\mathcal{F}:=\{ (f_1:f_2: f_3)\}_{m}$ can be deduced analyzing the squarefreeness, modulo $m(t)$, of the polynomial defining the tangents to $\Cu(G^h)$ at $(f_1:f_2:f_3)$. Alternatively, one can work with the curve $\CuFam(G^{h})$ (see Def. \ref{def-curve-fam}). Now, the family $\mathcal{F}$ in $\Cu(G^h)$ turns to be one point in $\CuFam(G^{h})$. Then, the character and multiplicity of $\mathcal{F}$ is the character and multiplicity of $(f_1:f_2:f_3)$ as a point in $\CuFam(G^{h})$.
\end{remark}

\subsection{Standard Decomposition of the Singular Locus}\label{subsec-standard-decomp}
 
\begin{definition}\label{def-stand-decomp}
A decomposition of the singular locus of the irreducible curve $\Cu(G^h)$ 
as a finite union of irreducible families of $K$-points, such that in each family the multiplicity and the singularity character is preserved, is called \textit{a $K$--standard decomposition of the singular locus of $\Cu(G^h)$} (see Remark~\ref{rem-standard-decomp-final} below). 
\end{definition}

In  \cite{myBook}, Section 3.3,  it is proved that the singular locus of $\Cu(G^h)$ always admits a $K$--standard decomposition. In the following, we describe a process to determine such a decomposution; see also \cite{myBook} page 83. First, we introduce the notion of regular position. 
\begin{definition}\label{def-regular}
We say that the affine plane curve $\Cu(G)$ is in \textit{regular position} w.r.t. $x$ if
\begin{enumerate}
\item the coefficient of $y^{\deg(G)}$ in $G$ is a non-zero constant; and
\item $G(a,b_i)=G_x(a,b_i)=0$ with $i\in \{1,2\}$ implies that $b_1=b_2$.
\end{enumerate}
\end{definition}

\begin{remark}\label{rem-regular-position}
\
\begin{enumerate}
\item
If $\Cu(G)$ is not in regular position, we may apply a linear change of coordinates over $\K$ such that $\Cu(G)$ is transformed into regular position (see e.g. Lemma 2 and Remark 3 in \cite{FS90}). 
\item For checking the regularity one may proceed as follows.  
Condition (1) in Def.  \eqref{def-regular} is easy to check. Let us now deal with the second condition.
Let  $\mathrm{J}$ be the ideal generated by $\{G,G_x\}$ in $K[x,y]$. Since $G$ is irreducible over $\overline{K}$,  $\mathrm{J}$ is zero--dimensional. Therefore, using the Shape lemma (see \cite{winkler} page  194), the normed reduced Gr\"obner basis of $\sqrt{\mathrm{J}}$, w.r.t. the lexicographic order $x<y$, is of the form
$ \{ u(x),y-v(x)  \}, $
with $u$ square-free and $\deg(v)<\deg(u)$, if and only if $\sqrt{\mathrm{J}}$, or equivalently $\mathrm{J}$, satisfies (2) in Def. \eqref{def-regular}. It remains to have a computational approach to determine $\sqrt{\mathrm{J}}$. This can be achieved, for instance, using Seidenberg lemma (see e.g.~\cite{seidenberg} or~\cite{laplagne}). 
More precisely, if $\mathrm{J}\cap K[x]=<\!f(x)\!>$ and $\mathrm{J}\cap K[y]=<\!g(y)\!>$, then
$\sqrt{\mathrm{J}}=<\!G, G_x, \tilde{f}, \tilde{g}\!>,$ 
where $\tilde{f}= f/\gcd(f,f')$ and $\tilde{g}= g/\gcd(g,g')$ and where $f'$ and $g'$ denotes the derivatives of $f$ and $g$ w.r.t. $x$ and $y$, respectively.
\end{enumerate}
\end{remark}

The process for computing a standard decomposition of the singular locus of $\Cu(G^h)$ is as follows.

 \vspace*{1mm}

\noindent \textsf{Step 1: Regular position.} If $\Cu(G)$ is not in regular position, apply an affine linear change of coordinates over $\K$, say $\mathcal{L}$, such that $\Cu(G)$ is transformed into regular position (see Rem. \ref{rem-regular-position} (1)).

\vspace*{1mm}

\noindent \textsf{Step 2: Families of singularities at infinity}. Factor $G^h(x,y,0)$ over $K$,
\[ G^h(x,y,0)=\prod_{i\in I} g_i(x,y)^{e_i}. \]
Note that none of the polynomials $g_i$ is $x$ because $\Cu(G)$ is in regular position w.r.t. $x$.
Then, the set of points at infinity decomposes in irreducible $K$--families as
\begin{equation*} \bigcup_{i\in I} \{(1:t:0)\}_{g_{i}(1,t)}.
\end{equation*}
Now, a family $\{(1:t:0)\}_{g_{i}(1,t)}$ is singular if and only if $G^{h}_{x}(1,t,0)=G^{h}_{y}(1,t,0)=G^{h}_{z}(1,t,0)=0$ modulo $g_{i}(1,t)$. Let $\mathcal{I}$ be the set containing all $g_{i}(1,t)$ defining singularities. Then, the irreducible families of $K$--singularities at infinity are
\begin{equation}\label{eq-sing-infinity} \bigcup_{m\in \mathcal{I}} \{(1:t:0)\}_{m(t)}.
\end{equation}

\vspace*{1mm}

\noindent \textsf{Step 3: Families of affine singularities.}
Let $\mathrm{I}$ be the ideal generated by $\{G, G_x, G_y\}$.
Since $\sqrt{\mathrm{I}}$ is regular w.r.t. $x$ because of condition (2) in Def. \ref{def-regular}, and zero--dimensional, by the discussion above on the Shape lemma, the normed reduced Gr\"obner basis w.r.t. the lexicographic order with $x<y$ of $\sqrt{\mathrm{I}}$ is of the form $\{A(x),y-B(x)\}$ with $A$ square-free and $\deg(B)<\deg(A)$; recall that if $\mathrm{I}\cap K[x]=<\!f(x)\!>$ and $\mathrm{I}\cap K[y]=<\!g(y)\!>$, then
\begin{equation*}\label{eq-radxy} \sqrt{\mathrm{I}}=<\!G, G_x, G_y, \tilde{f}, \tilde{g}\!>,
\end{equation*}
where $\tilde{f}, \tilde{g}, f', g'$ are as in Remark \ref{rem-regular-position} (2).
So, each irreducible factor $m(x)$ of $A(x)$ over $K$ generates the irreducible family
$\{(t:B(t):1)\}_{m(t)}.$ 
Therefore, if $\mathcal{A}$ denotes the set of all irreducible factors of $A$, the affine singularities decompose in irreducible $K$--families as
\begin{equation}\label{eq-sing-affine}
\bigcup_{a\in \mathcal{A}} \{(t:B(t):1)\}_{m(t)}.
\end{equation}

\vspace*{1mm}

\noindent \textsf{Step 4: Standard decomposition}. Applying $\mathcal{L}^{-1}$ to \eqref{eq-sing-infinity} and to \eqref{eq-sing-affine} we get a $K$--standard decomposition of the singular locus of $\Cu(G^h)$.

\begin{remark}\label{rem-standard-decomp-final}
Throughout the paper, we will use the $K$-standard decomposition achieved by means of the previous reasoning; we denote it as $\Stand(G^h))$. We observe that  
\begin{equation}\label{eq-SingularLocusApp}
\Stand(G^h):=\bigcup_{m(t)\in \mathcal{A}_a} \{(f_{1,m}:f_{2,m}:1)\}_{m} \, \cup \,  \bigcup_{m\in \mathcal{A}_{\infty}} \{(L_{1,m}:L_{2,m}:0)\}_{m}
\end{equation}
where $f_{i,m}(t),m(t)\in K[t]$, $L_{i,m}(t)\in \K[t]$  with $\deg(L_{i,m}(t))\leq 1$, $\gcd(L_{1,m}(t),L_{2,m}(t))=1$, and  $\mathcal{A}_a$ and $\mathcal{A}_{\infty}$ are finite sets of irreducible polynomials in $K[t]$. 
\end{remark} 
 
\subsection{Genus computation}\label{subsec-genus} We recall here the computation of the genus of an irreducible plane curve by means of the blow up of conjugate families of singular points (see Subsections \ref{subsec-families} and \ref{subsec-standard-decomp}). We denote the genus as either $\genus(\Cu(G))$ or $\genus(\Cu(G^h))$.  

Let $\Stand(G^h)$ be the $K$--standard decomposition of the singular locus of $\Cu(G^h)$ provided by the method in the previous subsection, see \eqref{eq-SingularLocusApp}. We  recursively, and separately, blow up each non--ordinary singularity via the neighboring singularities (see~\cite{walker}, \cite{mauro} or~\cite{myBook} for more details). Let us briefly recall how the first iteration step works, first for a single point, and afterwards for an irreducible family of conjugate points.   let $p\in \Cu(G^h)$ be a non--ordinary singular point.

\vspace*{1mm}

\noindent \underline{\textsf{Blow up of a point.}}  Let $p\in \Cu(G^h)$ be a non--ordinary singular point.

\vspace*{1mm}

\noindent \textsf{Step 1.} Apply a linear change of coordinates $\mathcal{L}$ such that: $p=\mathcal{L}(0:0:1)$, none of the tangent to $\Cu(G^h(\mathcal{L}))$ at $(0:0:1)$ is one of the lines $x=0,$ and $y=0$, and for $v\in \{x,y,z\}$ no point in $\left(\Cu(G^h(\mathcal{L}))\setminus \Cu(v)\right)\setminus \{(0:0:1)\}$ is a singularity of $\Cu(G^h)$.
\vspace*{1mm}

\noindent \textsf{Step 2.} Apply the Cremona transformation $\mathcal{Q}:=(yz:xz:xy)$ to $\Cu(G^h(\mathcal{L}))$. Then, $G^h(\mathcal{L}(\mathcal{Q}))$ factors as $G^h(\mathcal{Q}(\mathcal{L}))=x^{n_1}y^{n_2}z^{n_3} G^*$ for some natural numbers $n_1,n_2,n_3$. We call $G^*$ the \textit{quadratic transform of $G^h$ w.r.t. $p$}.

\vspace*{2mm}

The \textit{first neighboring} of $p$ is defined as
\[\left( \Cu(G^*) \cap \Cu(z)\right)\setminus \{(1:0:0),(0:1:0)\}.\]
The points, and their multiplicities, in the neighborhood, are in one to one correspondence with the tangents, and their multiplicities, to $\Cu(G^h)$ at $p$. The (first) neighboring singularities of $p$ are the neighboring points being singularities of $\Cu(G^*)$. The process continues till no non-ordinary neighboring point appears in the neighborhoods and until all non-ordinary singularities of $\Cu(G^h)$ have been blowed up. We will refer to the set of all singularities and neighboring singularities as the \textit{neighboring graph} of $\Cu(G^h)$.
In this situation, the genus can be computed
as (recall that $d=\deg(\Cu(G))$)
\begin{equation}\label{eq-genus}
\genus(\Cu(G))= \dfrac{(d-1)(d-2)}{2} -\dfrac{1}{2} \sum  \mathrm{mult}(p) (\mathrm{mult}(p)-1)
\end{equation}
where the sum is taking over all the points in the neighboring graph of $\Cu(G^h)$.

\vspace*{1mm}

\noindent \underline{\textsf{Blow up of a family.}}  
Let $\mathcal{F}=\{(f_1:f_2:f_3) \}_{m_0(t_0)}$ be a non-ordinary irreducible $K$--family of singularities. We may introduce the associated   curve $\CuFam(G^{h})$ (see Def. \ref{def-curve-fam}) and repeat the process with the single point $p_{\mathcal{F}}:=(f_1:f_2:f_3)$. In this way the first neighboring of $p_{\mathcal{F}}$ is decomposed into irreducible $K_{m_0}$--families; recall that $K_{m_0}$ is the quotient field of $K[t_0]/\!<\!m_0(t_0)\!>$. These irreducible $K_{m_0}$--conjugate families are of the form
\begin{equation}\label{eq-neigh1}
\{(t_1:1:0) \}_{m_1(t_0,t_1)}
\end{equation}
where $t_1$ is a new variable and $m_1(t_0,t_1)\in K_{m_0}[t_1]\setminus \{t_1\}$ is irreducible over $K_{m_0}$. In this situation, the method continues by applying the same process to the irreducible families of non-ordinary singularities in the first neighborhood. The irreducible families in the second neighborhood will be of the form
\begin{equation}\label{eq-neigh2}
\{(t_2:1:0) \}_{m_2(t_0,t_1,t_2)}
\end{equation}
where $t_2$ is a new variable and $m_2\in K_{m_0,m_1}[t_2]\setminus \{t_2\}$ is irreducible over $K_{m_0,m_1}$, where $K_{m_0,m_1}$ denotes the quotient field of $K_{m_0}[t_1]/\!<\!m_1(t_0,t_1)\!>$. In general, the irreducible families in the $i$-th neighborhood will be of the form
\begin{equation}\label{eq-neighs}
\{(t_i:1:0)\}_{m_i(t_0,\ldots,t_i)}.
\end{equation}
By abuse of notation, we will say that the families of the form in \eqref{eq-neigh1}, \eqref{eq-neigh2}, \eqref{eq-neighs} are $K$--families.
Applying this process till no non-ordinary neighboring conjugate family appears in the neighborhoods and until all non-ordinary families of $\Cu(G^h)$ have been blown up, we get a decomposition that we call a \textit{$K$--standard decomposition of the neighboring graph of singularities. In this situation, the genus can be computed as
\begin{equation}\label{eq-genus2}
\genus(\Cu(G))=\dfrac{(d-1)(d-2)}{2} -\dfrac{1}{2} \sum_{\mathcal{F}\in \mathscr{N}}\, \#(\mathcal{F}) \, \mathrm{mult}(\mathcal{F}) (\mathrm{mult}(\mathcal{F})-1)
\end{equation}
where $\mathscr{N}$ is a standard decomposition of the neighboring graph of $\Cu(G^h)$.}

\subsection{Parametrization algorithm}\label{subsec:param}
Once the genus of $\Cu(G^h)$ has been computed, if it is zero, we may derive a rational parametrization of the curve. There are different approaches to achieve a rational parametrization, see e.g. \cite{hoeij},  \cite{schicho}, \cite{SW91}, \cite{SW97}, \cite{myBook}. Here we will use a simplified version of the algorithm in \cite{SW97} where Hilbert--Hurwitz theorem (see e.g. Theorem. 5.8. in \cite{myBook}) is applied directly and recursively. Further improvements can be found in \cite{myBook}.

Let $\mathcal{N}$ be a $K$--standard decomposition of the neighboring graph of singularities of $\Cu(G^h)$. We consider the linear system $\mathscr{A}_{d-2}(\Cu(G^h))$ of adjoint curves to $\Cu(G^h)$ of degree $d-2$ (recall that $d=\deg(\Cu(G))$); that is,
the linear system of all $(d-2)$--degree curves having each $r$-fold point of $\Cu(G^h)$, including the neighboring ones, as a point of multiplicity at least $r-1$. In other words, $\mathscr{A}_{d-2}(\Cu(G^h))$ is the linear system of curves of degree $d-2$ defined by the   divisor
\[ \sum_{\mathcal{F}\in \mathcal{N}} \sum_{p\in \mathcal{F}} (\mathrm{mult}(p)-1)\, p, \]
where the multiplicity is w.r.t. the curve where $\mathcal{F}$ belongs to. The linear conditions to construct the linear system can be derived by working with the families in $\mathcal{N}$. In the following, we outline the process for computing $\mathscr{A}_{d-2}(\Cu(G^h))$.

\vspace*{2mm}

\noindent \underline{\textsf{Adjoints computation}}

\vspace*{1mm}

\noindent \textsf{Step 1.}
We identify the set of all projective curves, including multiple component curves, of fixed degree $d-2$, with the projective space
\begin{equation*}\label{eq-Vd}
\mathscr{V}_{d-2}:=\mathbb{P}^{\frac{(d-2)(d+1)}{2}}(\overline{K})
\end{equation*}
via their coefficients, after fixing an order of the monomials. By abuse of notation, we will refer to the elements in $\mathscr{V}_{d-2}$ by either their tuple of coefficients, or by the associated $\{x,y,z\}$--form, or by the corresponding curve. Let $H(\Lambda,x,y,z)$ denote the $\{x,y,z\}$--homogeneous polynomial of degree $d-2$, 
where $\Lambda$ is a tuple of undetermined coefficients.

\vspace*{1mm}

\noindent \textsf{Step 2.}
For each $r$-fold irreducible $K$--family $\mathcal{F}:=\{ (f_1:f_2:f_3)\}_{m_0(t_0)}\in \Stand(G^h)$, see \eqref{eq-SingularLocusApp}, and for each partial derivative $M$ of $H$ of order $r-2$ w.r.t.  $\{x,y,z\}$, compute $M(\Lambda,f_1,f_2,f_3)$ mod $m_0(t_0)$ and collect in a set $\mathcal{S}$ of its non--zero coefficients w.r.t. $t$.

\vspace*{1mm}

\noindent \textsf{Step 3.}  For each (neighboring) $r$-fold irreducible family  $\mathcal{F}:=\{(t_i:1:0)\}_{m_i(t_0,\ldots,t_i)}\in \mathcal{N}$, compute
\begin{equation}\label{eqs-neig}
\text{$W:=(\cdots (N(\Lambda,t_1,1,0)$ mod $m_i) \cdots $) mod $m_1$}
\end{equation}
where $N$ is every partial derivative of order $r-2$ of $N^*$, being $N^*$ the form obtained applying to $H$ the same blow up process (see Subsection \ref{subsec-genus}) as the one to reach the curve where the neighboring family $\mathcal{F}$ belongs to. Include in $\mathcal{S}$ all non--zero coefficients w.r.t. $t$ of $W$ \eqref{eqs-neig}.

\vspace*{1mm}

\noindent \textsf{Step 4.} Solve the homogeneous linear system of equations $\{ L(\Lambda)=0 \}_{L\in \mathcal{S}}$ and substitute the result in $H$. The  resulting form  defines $\mathscr{A}_{d-2}(\Cu(G^h))$.

\begin{remark}\label{rem-Adj}
The defining polynomial of $\mathscr{A}_{d-2}(\Cu(G^h))$ has coefficients in $K(\Lambda)$. So, the ground field is not extended.
\end{remark}

Hilbert-Hurwitz theorem (see \cite{HH} or \cite[Theorem 5.8]{myBook}) ensures that for almost all $(\phi_1,\phi_2,\phi_3)\in \mathscr{A}_{d-2}(\Cu(G^h))^3$ the mapping
\begin{equation}\label{eq-HH}
\mathscr{H}\equiv (y_1:y_2:y_3)=(\phi_1(x,y,z):\phi_2(x,y,z):\phi_3(x,y,z))
\end{equation}
transforms birationally $\Cu(G^h)$ to an irreducible curve of degree $d-2$. Note that, because of Remark~\ref{rem-Adj}, the map in \eqref{eq-HH} is defined over $K$.
Furthermore, since the map is birational, the genus is preserved. Thus, applying successively Hilbert--Hurwitz theorem one gets either a conic (if $d$ is even) or a line (if $d$ is odd) $K$--birationally equivalent to $\Cu(G^h)$.

After these considerations, we can outline the parametrization algorithm.

\vspace*{2mm}

\noindent \underline{\textsf{Parametrization computation}}


\vspace*{1mm}

\noindent \textsf{Step 1.}
Let $G$ be as above so that the genus of $\Cu(G^h)$ is zero.

\vspace*{1mm}

\noindent \textsf{Step 2.} Set $M:=G$, $\rho:= \deg(M)$, and $\mathcal{T}$ as the identity map in $\mathbb{P}^{2}(K)$.


\vspace*{1mm}

\noindent \textsf{Step 3.} While $\rho>2$ do
\begin{enumerate}
\item Compute $\mathscr{A}_{\rho-2}(\Cu(M))$.
\item Take $(\phi_1,\phi_2,\phi_3)\in \mathscr{A}_{d-2}(\Cu(G^h))^3$, so that the mapping $\mathscr{H}$ in $\eqref{eq-HH}$ is birational over $\Cu(M)$.
\item Replace $M$ by $M(\mathscr{H}^{-1})$, $\rho$ by $\deg(M)$, and $\mathcal{T}$ by $\mathscr{H} \circ \mathcal{T}$.
\end{enumerate}
\vspace*{1mm}

\noindent \textsf{Step 4.}
Parametrize birationally $\Cu(M)$. Let $\Qa(t)$ be the output parametrization.

\vspace*{1mm}

\noindent \textsf{Step 5.}
Return $\mathcal{T}^{-1}(\Qa(t))$.

\begin{remark}\
\begin{enumerate}
\item If $d$ is odd one may stop the loop in Step 3 when $\rho=3$ since cubics can be easily parametrize over the ground field. For parametrizing a conic or a cubic see \cite{myBook}.
\item The direct classical parametrization algorithm, see \cite{walker}, \cite{myBook}, for $\Cu(G^h)$, in addition to the computation of the adjoints, needs $d-2$ simple points of $\Cu(G^h)$ (indeed, a more sophisticated method in \cite{SW91} shows that only one simple point is enough). Therefore, and alternative to Step 5 is: compute $d-2$ simple points either on the conic, or on the line, for instance, using $\Qa(t)$; apply $\mathcal{T}$ to these simple points; and now use the direct parametrization algorithm  for $\Cu(G^h)$.
\end{enumerate}
\end{remark}

\end{document}